\newtheorem{thm}{Theorem}[section]
\newtheorem{lem}[thm]{Lemma}
\newtheorem{pro}[thm]{Proposition}
\theoremstyle{definition}
\newtheorem{defin}{Definition}
\newtheorem{rem}[thm]{Remark}
\numberwithin{equation}{section}
\newcommand{\X}{\mathbb{X}}
\newcommand{\T}{\mathcal{T}}
\newcommand{\Y}{\mathbb{Y}}
\newcommand{\ex}{\mathbb{E}}
\newcommand{\re}{\textup{Re}}
\newcommand{\pr}{\mathbb{P}}
\newcommand{\ep}{\varepsilon}
\newcommand{\F}{\mathcal{F}_3(Q)}
\newcommand{\sums}{\sideset{}{^*}\sum}
\newcommand{\newabstract}[1]{%
  \par\bigskip
  \csname otherlanguage*\endcsname{#1}%
  \csname captions#1\endcsname
  \item[\hskip\labelsep\scshape\abstractname.]
}
\begin{document}

\baselineskip=17pt

\title{The distribution of the maximum of cubic character sums}

\author{Youness Lamzouri}
\author{Kunjakanan Nath}

\address{
Universit\'e de Lorraine, CNRS, IECL,  and  Institut Universitaire de France,
F-54000 Nancy, France}
\email{youness.lamzouri@univ-lorraine.fr}

\address{
Universit\'e de Lorraine, CNRS, IECL,  
F-54000 Nancy, France}
\email{kunjakanan@gmail.com}

\date{}

\begin{abstract}
For a primitive Dirichlet character $\chi\pmod q$ we let 
\[M(\chi):= \frac{1}{\sqrt{q}}\max_{1\leq t \leq q} \Big|\sum_{n \leq t} \chi(n) \Big|.\]
In this paper, we investigate the distribution of $M(\chi)$, as $\chi$ ranges over primitive cubic characters $\chi\pmod q$ with $(q,3)=1$ and $q\leq Q$. 
Our first result gives an estimate for the proportion of such characters for which $M(\chi)>V$, in a uniform range of $V$, which is best possible under the assumption of the Generalized Riemann Hypothesis. In particular, we show that the distribution of large cubic character sums behaves very differently from those in the family of non-principal characters modulo a large prime, and the family of quadratic characters. We also investigate the location of the number $N_{\chi}$ where the maximum of $|\sum_{n\leq N} \chi(n)|$ is attained, and show the surprising result that for almost all primitive cubic characters $\chi\pmod q$ with  $M(\chi)>V$, $N_{\chi}/q$ is very close to a reduced fraction with a large denominator of size $(\log V)^{1/2+o(1)}$. This contradicts the common belief that for an even character $\chi$, $N_{\chi}/q$ is located near a rational of small denominator and gives a striking difference with the case of even characters in the other two families mentioned above, for which $N_{\chi}/q\approx 1/3$ or $2/3$ for almost all even $\chi$. Furthermore, in the case of cubic characters, the works of Granville-Soundararajan, Goldmakher, and Lamzouri-Mangerel show that if $M(\chi)$ is large, then $\chi$ pretends to be $\xi(n)n^{it}$ for some small $t$, where $\xi$ is an odd character of small conductor $m$. We show that for almost all such characters, we have $M(\chi)=m^{-1/2+o(1)}\big|L(1+it, \chi\overline{\xi})\big|.$   

\end{abstract}

\subjclass[2020]{Primary: 11N64, 11L40 ; Secondary: 11K65}

\thanks{The first author is supported by a junior chair of the Institut Universitaire de France.}

\maketitle

\setcounter{tocdepth}{1}



\section{Introduction}

\subsection{Bounds on long character sums} Dirichlet characters and sums involving them have been extensively studied over the last century. Let $\chi$ be a non-principal Dirichlet character modulo $q$. An important quantity associated with $\chi$ is\footnote{Throughout this paper, we will normalize the maximum of the character sum $\sum_{n \leq t} \chi(n)$ by $\sqrt{q}$. Such a normalization is natural in view of the P\'olya Fourier expansion \eqref{Eq:PolyaFourierExpansion}.}
$$M(\chi) := \frac{1}{\sqrt{q}}\max_{t \leq q} \Big|\sum_{n \leq t} \chi(n) \Big|.
$$
It is known that $M(\chi)\gg 1$ (see \cite[Theorem 9.23]{MV07}) for all primitive characters modulo $q$. On the other hand, the best known upper bound for $M(\chi)$ is the classical P\'olya-Vinogradov inequality (see \cite[Theorem 9.18]{MV07}), which dates back to 1918, and states that
\begin{equation}\label{Eq:PolyaVinogradov}
M(\chi)  \ll  \log q.
\end{equation}
It is amazing that this century-old result, which is proved using only basic Fourier analysis, has resisted substantial improvement outside of certain special cases. In fact, an improved bound of the form $M(\chi)=o(\log q)$ would have immediate consequences on other problems in analytic number theory (see, for example, \cite{BoGo16} and \cite{GrMa23}). Almost sixty years later, Montgomery and Vaughan \cite{MV77} succeeded in improving the P\'olya-Vinogradov inequality, assuming the Generalized Riemann Hypothesis (GRH). More precisely they showed that under GRH one has 
\begin{equation}\label{Eq:MVCHARBOUND}
M(\chi)\ll \log\log q.
\end{equation}
Although conditional, this result is significant since the Montgomery-Vaughan bound is best possible in view of an old result of Paley \cite{Pa32}, who proved the existence of an infinite family of primitive quadratic characters $\chi \pmod q$ for which $M(\chi)\gg \log\log q$. 

In 2007, Granville and Soundararajan \cite{GrSo07} made a major breakthrough by showing that both the P\'{o}lya-Vinogradov and the Montgomery-Vaughan bounds can be improved for characters of a fixed odd order. In particular, in the case of a primitive cubic character $\chi \pmod q$, they showed that 
\begin{equation}\label{Eq:UpperGrSo}
M(\chi)\ll \begin{cases} (\log q)^{1/2+3\sqrt{3}/(4\pi)+o(1)} & \text{ unconditionally,}\\
 (\log\log q)^{1/2+3\sqrt{3}/(4\pi)+o(1)} & \text{ assuming GRH.}\end{cases}
\end{equation} 
 By refining their method, Goldmakher \cite{Gol} obtained the better exponent $3\sqrt{3}/(2\pi)$ in both results. Moreover, Goldmakher and Lamzouri \cite{GoLa12} proved that the conditional part of Goldmakher's result is optimal up to $(\log\log q)^{o(1)}$. More precisely, they showed the existence of an infinite family of primitive cubic characters $\chi \pmod q$ such that 
 $$ M(\chi)\gg_{\ep} (\log\log q)^{3\sqrt{3}/(2\pi)-\ep}.$$
To simply our notation throughout we put 
\begin{align}\label{Def:beta}
    \beta:=\frac{3\sqrt{3}}{2\pi}=0.826993\dotsc.
\end{align}
By using a new Hal\'asz-type inequality for logarithmic mean values of completely multiplicative functions,  Lamzouri and Mangerel pushed these results further in \cite{LaMa22}. Indeed, they first proved that 
\begin{equation}\label{Eq:UpperLamzouriMangerel}
   M(\chi)\ll \begin{cases} (\log q)^{\beta}(\log_2 q)^{-1/4+o(1)} & \text{ unconditionally,}\\
 (\log_2 q)^{\beta}(\log_3 q)^{-1/4}(\log_4 q)^{O(1)} & \text{ assuming GRH,}\end{cases} 
\end{equation} where here and throughout we let $\log_k$ denote the $k$-th iterate of the natural logarithm function. Furthermore, Lamzouri and Mangerel proved the existence of an infinite family of primitive cubic characters $\chi \pmod q$ such that 
 $$ M(\chi)\gg (\log_2 q)^{\beta}(\log_3 q)^{-1/4}(\log_4 q)^{O(1)}.$$

\subsection{The distribution of $M(\chi)$ over families of characters}
Although $M(\chi)$ can grow as large as $\log\log q$, Montgomery and Vaughan \cite{MV79} showed that this is a rare event and that $M(\chi)\ll 1$ for most characters. To this end, they proved that for any fixed positive real number  $k$, the $k$-th moment of $M(\chi)$ over the family of primitive characters $\chi \pmod q$ is bounded by a constant depending on $k$. Their results can be stated in terms of the distribution function
$$ \Phi_q(V):=\frac{1}{\varphi(q)} \# \{\chi \ (\bmod \ q) : M(\chi)>V\},$$
where $\varphi(q)$ is Euler's totient function.
Indeed, it follows from the work of Montgomery and Vaughan that 
$$ \Phi_q(V) \ll_C V^{-C}, $$ for any constant $C\geq 1$. 
This estimate was improved by Bober and Goldmakher \cite{BoGo13}, who showed that for fixed $V$,  $\Phi_q(V)$ decays exponentially faster. Subsequently, Bober, Goldmakher, Granville, and Koukoulopoulos \cite{BGGK18} proved a much stronger and uniform result in the case where $q$ is a large prime. More specifically, they showed that in the range $1\ll V \leq (e^{\gamma}/\pi+o(1))\log\log q$ one has\footnote{Here $\gamma$ is the Euler-Mascheroni constant.} 
\begin{equation}\label{Eq:BGGK}
\Phi_q(V)= \exp\left(-\frac{e^{e^{-\gamma}\pi V+O(1)}}{V}\right). 
\end{equation}
 Their method relies on the exact orthogonality relation for characters modulo $q$ and does not seem to generalize to other families of Dirichlet characters.
Using a different approach, based on the quadratic large sieve, Lamzouri \cite{La22} obtained a similar result for the family of quadratic characters. More precisely, he showed that the estimate \eqref{Eq:BGGK} holds for the proportion of fundamental discriminants $|d|\leq Q$ such that $M(\chi_d)>V$ in a similar range of uniformity $1\ll V\leq (e^{\gamma}/\pi+o(1))\log\log Q$, if $Q$ is large. Here $\chi_d= \left(\frac{d}{\cdot}\right)$ is the Kronecker symbol modulo $|d|$. Lamzouri also proved analogous results for the family of Legendre symbols modulo primes $p\leq Q.$

Although the family of primitive characters modulo $q$, and that of quadratic characters attached to fundamental discriminants have been extensively studied in various contexts (large sieve inequalities, moments and non-vanishing of $L$-functions at the central point, distribution of values of $L$-functions,  low-lying zeros, zero density estimates, to name a few), up until recently much fewer results have been established for cubic Dirichlet characters, due to several conceptual and technical difficulties. Here and throughout, we shall denote   
\[\F:=\{\chi\bmod q\colon \chi\: \text{primitive and cubic with $(q, 3)=1$ and $q\leq Q$}\}.\]
We also put $\omega_3:=\exp(2\pi i/3)$. The main difficulty when dealing with cubic characters lies in the fact that the natural setting where such characters should be studied is over the ring of Eisenstein integers $\mathbb{Z}[\omega_3]$, due in particular to the cubic reciprocity law. An important example is Heath-Brown's celebrated cubic large sieve inequality \cite{HB00} which, unlike his quadratic large sieve over the integers, is stated over $\mathbb{Z}[\omega_3]$, requires the inner summation to be taken over squarefree integers, and has an additional mysterious term of size $(MN)^{2/3}$. It was thought for a long time that this term could be removed until the very recent groundbreaking work of Dunn and Radziwi\l\l\: \cite{DuRa}  on Patterson's conjecture for cubic Gauss sums, which showed that the cubic large sieve is optimal under the assumption of the Generalized Riemann Hypothesis. Baier and Young \cite{BaYo10} have managed to somehow transfer the  cubic large sieve inequality of Heath-Brown to $\mathbb{Z}$ 
but their result is complicated and much less efficient than the quadratic large sieve. In the same paper, they proved an asymptotic formula for the first moment of $L(1/2, \chi)$ as $\chi$ varies in $\F$, and used their large sieve inequality to obtain an upper bound on the second moment, where $L(s, \chi)$ is the Dirichlet $L$-function attached to $\chi$. It is worth noting here that unlike the family of primitive characters modulo $q$ and that of quadratic characters, no asymptotic formula is known for the second moment of $L(1/2, \chi)$ over $\F$. 
We should also mention a very recent work of Darbar, David, Lal\'in, and Lumley \cite{DDLL23} on the distribution of values at $1$ of Dirichlet $L$-functions attached to cubic characters in $\F$. Using ideas from the works of Granville-Soundararajan \cite{GrSo03} and Dahl-Lamzouri \cite{DaLa18}, they constructed a probabilistic random model for this family and estimated the distribution of large and small values of $L(1, \chi)$, as $\chi$ varies in $\F$.  In particular, they proved the interesting fact that the distribution of the small values of $L(1, \chi)$ over cubic characters behaves differently from the other two studied families of characters, namely primitive characters modulo $q$ and quadratic characters.

The goal of this paper is to investigate the distribution of large values of $M(\chi)$, as $\chi$ varies in the family of primitive cubic characters with conductor less than $Q.$ As is expected from the upper bound \eqref{Eq:UpperGrSo} of Granville and Soundararajan, and the further refinements by Goldmakher and Lamzouri-Mangerel, we prove that the proportion of cubic characters $\chi\in \F$ for which $M(\chi)>V$ decays much faster than \eqref{Eq:BGGK}.    
\begin{thm}\label{Thm:Main}
Let $Q$ be large. Let $\ep>0$ be a small fixed real number. There exists a positive constant $C_0$ such that uniformly for $V$ in the range 
\[C_0\leq V\leq (\log_2 Q)^{\beta}(\log_3 Q)^{-1/4-\ep},\]
we have 
$$ \frac{1}{\#\F}\#\{\chi \in \F\colon M(\chi)>V\}= \exp\left(-\exp\left(V^{1/\beta}(\log V)^{1/(4\beta)+o(1)}\right)\right),$$
where $\beta$ is given by \eqref{Def:beta}.
\end{thm}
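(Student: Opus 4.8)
The plan is to pass from $M(\chi)$ to the size of Dirichlet $L$-functions at $s=1$ twisted by odd characters of small conductor, and then to analyse the distribution of those twisted values over $\F$ by combining the pretentious large-deviations philosophy with mean-value estimates for cubic characters. I would begin with the P\'olya Fourier expansion: since every cubic character is even, one has $M(\chi)=\tfrac1\pi\max_{\theta}\big|\sum_{n\le y}\tfrac{\overline{\chi}(n)}{n}\sin(2\pi n\theta)\big|+o(1)$ for a suitable truncation level $y$. Following Granville--Soundararajan \cite{GrSo07}, Goldmakher \cite{Gol} and Lamzouri--Mangerel \cite{LaMa22}, the maximum is, up to a factor $(\log_3 Q)^{o(1)}$, attained at $\theta$ near a reduced fraction $a/m$ with $m$ small; expanding $e(na/m)$ in Gauss sums shows the resonating sum at $\theta\approx a/m$ equals $\tfrac1{i\varphi(m)}\sum_{\psi\,(\mathrm{mod}\,m)\ \text{odd}}\tau(\overline{\psi})\,\psi(a)\sum_{n\le y}\tfrac{\overline{\chi}\psi(n)}{n}$, and the term attached to an odd primitive $\xi\pmod m$ for which $\chi$ pretends to be $\xi$ (i.e.\ $\chi(p)$ aligns with $\xi(p)$ at small $p$) dominates. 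This gives $M(\chi)=m^{-1/2+o(1)}|L(1+it,\chi\overline{\xi})|$ with $|t|$ small, and reduces the theorem to estimating $\#\{\chi\in\F:\ |L(1,\overline{\chi}\,\xi)|>W\}/\#\F$, uniformly over odd primitive $\xi\pmod m$ with $m$ small, at $W\asymp m^{1/2}V$.

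For a fixed such $\xi$, of conductor $m$ and order $r$, the next step is to write $\log|L(1,\overline{\chi}\,\xi)|=\re\sum_{p\le z}\tfrac{\overline{\chi}(p)\xi(p)}{p}+O(1)$ with $z=(\log Q)^{A}$, valid for all $\chi$ outside a sparse exceptional set where $L(s,\overline{\chi}\,\xi)$ has a zero near $s=1$ (controlled by a weak zero density estimate for the family, or by the Granville--Soundararajan device). I would split the prime sum at a level $P_0$. For $p>P_0$ the partial sum is typically $O(1)$, and its large deviations are handled by moments $\mathbb{E}_{\chi\in\F}\big|\sum_{P_0<p\le z}\tfrac{\overline{\chi}(p)\xi(p)}{p}\big|^{2k}$ of moderate order, which collapse to the diagonal because $\mathbb{E}_{\chi\in\F}\chi(n)$ is negligible unless $n$ is a perfect cube --- here the input is Heath--Brown's cubic large sieve \cite{HB00} and its transfer to $\mathbb{Z}$ by Baier--Young \cite{BaYo10}, valid when the primes involved multiply to at most $Q^{1/2-\delta}$. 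For $p\le P_0$ one uses the near-orthogonality of cubic characters to show $\#\{\chi\in\F:\ \chi(p)=\zeta_p\ \forall p\le P_0\}=\#\F\cdot 3^{-\pi(P_0)}(1+o(1))$ whenever $\prod_{p\le P_0}p\le Q^{1/2-\delta}$. Since $\re(\overline{\chi}(p)\xi(p))\le M_p:=\max_{0\le j\le 2}\cos\!\big(\tfrac{2\pi j}{3}+\arg\xi(p)\big)$ and $\arg\xi(p)$ equidistributes in the group of $r$-th roots of unity as $p$ varies, one gets $\sum_{p\le P_0}\tfrac{M_p}{p}=\beta_r\log_2 P_0+O(1)$, where $\beta_r:=\tfrac1r\sum_{k=0}^{r-1}\max_{0\le j\le 2}\cos\!\big(\tfrac{2\pi j}{3}+\tfrac{2\pi k}{r}\big)$ satisfies $\beta_r\to\beta$ and $\beta-\beta_r=(c_0+o(1))r^{-2}$ for an absolute constant $c_0>0$ --- the loss $\beta_r<\beta$ reflecting that an odd character can never be cubic. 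Counting the admissible value vectors $(\zeta_p)_{p\le P_0}$ with $\sum_{p\le P_0}\re(\overline{\chi}(p)\xi(p))/p>\log W-O(1)$ then gives the upper bound $\#\{\chi\in\F:\ |L(1,\overline{\chi}\,\xi)|>W\}\ll\#\F\cdot\exp(-\pi(P_1)\log 3\,(1+o(1)))$, where $\beta_r\log_2 P_1=\log W$; the matching lower bound comes from the set $\{\chi\in\F:\ \chi(p)=\zeta_p\ \forall p\le P_1\}$ (for moduli $q$ in which the small primes are multiplicatively independent), together with a second moment bound on the tail over $P_1<p\le z$ and the reverse inequality $M(\chi)\ge m^{-1/2}|L(1,\overline{\chi}\,\xi)|/\pi-O(m^{1/2}\log m)$.

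It remains to optimise over $\xi$. Taking $m\asymp r$ (the least conductor supporting a suitable odd character of order $r$) and $W\asymp m^{1/2}V$, one must minimise $\log_2 P_1=\beta_r^{-1}\big(\log V+\tfrac12\log r+O(1)\big)$ over even $r$; since $\beta-\beta_r=(c_0+o(1))r^{-2}$, the minimum occurs at $r\asymp(\log V)^{1/2}$ --- forcing the conductor $m=(\log V)^{1/2+o(1)}$, in agreement with the location statement in the abstract --- and gives $\log_2 P_1=\beta^{-1}\log V+(4\beta)^{-1}\log_2 V+O(1)$, hence $\log P_1=V^{1/\beta}(\log V)^{1/(4\beta)+o(1)}$. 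Feeding this in, and absorbing the $(\log_2 Q)^{O(1)}$ characters $\xi$ to be summed over into the error term (harmless, since $\pi(P_1)$ is far larger), yields $\#\{\chi\in\F:\ M(\chi)>V\}/\#\F=\exp\!\big(-\exp\big(V^{1/\beta}(\log V)^{1/(4\beta)+o(1)}\big)\big)$. The stated range of $V$ is exactly what keeps $P_1=o(\log Q)$, so that the cubic large sieve and orthogonality inputs remain in their range of validity.

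\textbf{Main obstacle.} I expect the crux to be twofold. Conceptually, one must show that small-conductor resonances are \emph{inefficient}: making $\chi$ pretend to a fixed odd character of bounded conductor --- even twisted by $n^{it}$ --- does \emph{not} produce the largest values of $M(\chi)$, because the P\'olya Fourier sum attaches to $L(s,\cdot)$ at $s=1$ with no effective twist, so the relevant efficiency constant is the \emph{discrete} average $\beta_r$ rather than $\beta$. Pinning down $\beta-\beta_r=(c_0+o(1))r^{-2}$ uniformly, and ruling out resonances at rationals of small denominator, is delicate. Technically, the obstacle is that the cubic large sieve (and its transfer to $\mathbb{Z}$) is substantially weaker than the quadratic large sieve --- it carries the extra $(MN)^{2/3}$ term --- so controlling the error terms in both the orthogonality relation at primes $\le P_0$ and the moment bounds at primes in $(P_0,z]$ forces $P_0$ to stay below $\log Q$ by a margin, which is precisely the source of the admissible range of $V$ and of the extra $(\log_3 Q)^{-\ep}$ at the upper endpoint.
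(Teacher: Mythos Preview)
Your overall plan --- reduce $M(\chi)$ to twisted $L$-values $|L(1,\chi\overline{\xi})|$ with $\xi$ odd of small conductor, estimate the distribution of those values over $\F$, then optimise over $\xi$ to find $m\asymp(\log V)^{1/2}$ --- is sound, and your optimisation calculus (with $\beta-\beta_r\asymp r^{-2}$ producing the exponent $1/(4\beta)$) is exactly what drives the answer. The lower bound in the paper follows essentially the route you describe: fix an odd primitive $\psi$ of prime non-exceptional conductor $m\in[\sqrt{\log V},2\sqrt{\log V}]$, use $M(\chi)\gg\sqrt{m}\,\varphi(m)^{-1}|L(1,\chi\overline{\psi})|$, and estimate the tail of $|L(1,\chi\overline{\psi})|$ via moments compared to a random Euler product (rather than by directly counting value vectors $(\chi(p))_{p\le P_0}$ as you propose, but the two are equivalent).

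The paper's \emph{upper bound}, however, is organised differently and this matters. Rather than first establishing the structural reduction $M(\chi)\approx m^{-1/2}|L(1+it,\chi\overline{\xi})|$ (which is your step~1 and in the paper is the separate Theorem~\ref{Thm:Structure}), the paper splits the P\'olya Fourier sum at a smoothness threshold $y$: the $y$-smooth part is bounded \emph{deterministically} for every $\chi\in\F$ by $(\log y)^{\beta}(\log_2 y)^{-1/4+o(1)}$ using the Lamzouri--Mangerel major-arc machinery, and only the $y$-rough tail is bounded probabilistically. This avoids having to sum an upper bound over all candidate $\xi$ and sidesteps the need to first control which $\xi$ a given $\chi$ pretends to.

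The sharper divergence is in the large sieve input, and here your anticipated obstacle is real --- but the paper resolves it differently from what you suggest. The paper explicitly does \emph{not} use Heath--Brown/Baier--Young for the tail; it finds that sieve unsuitable. Instead, for $n\in[Q^{\delta},Q^{21/40}]$ it uses the \emph{classical} large sieve over all primitive characters mod $q\le Q$ (with bounded moment order $k\le 2001$), and for $n\le Q^{\delta}$ it uses a very simple ``orthogonality'' inequality for $\F$ (their Lemma~\ref{Lem:Orthogonality}) coming from cubic reciprocity and P\'olya--Vinogradov for Hecke characters over $\mathbb{Q}(\omega_3)$, which replaces the diagonal condition ``$n$ is a square'' by ``$n_1n_2^2$ is a cube''. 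The tail moments then reduce to divisor sums $\sum_{n_1n_2^2=\cube}d_k(n_1)d_k(n_2)/(n_1n_2)$ over rough integers, bounded via a random-multiplicative-function calculation. If you try to push Baier--Young through your moment argument at primes in $(P_0,z]$ you will likely find the $(MN)^{2/3}$ term genuinely obstructive; switching to the classical large sieve for the large-$n$ range and the elementary cubic orthogonality for the small-$n$ range is the key technical move.
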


\begin{rem}
We first note that  \eqref{Eq:UpperLamzouriMangerel} shows that the range of validity of Theorem \ref{Thm:Main} is best possible up to $(\log_3 Q)^{o(1)}$, under the assumption of the Generalized Riemann Hypothesis. In fact, we obtain a better result for the implicit lower bound in Theorem \ref{Thm:Main}. More precisely, our proof shows the existence of a positive constant $C>0$ such that if \[C_0\leq V\leq (\log_2 Q)^\beta (\log_3 Q)^{-1/4}(\log_4 Q)^{-C},\]
then
\begin{align*}
    \frac{1}{\#\F}\#\{\chi \in \F \colon M(\chi)>V\}\geq \exp\left(-\exp\left(V^{1/\beta}(\log V)^{1/(4\beta)}(\log_2 V)^{O(1)}\right)\right).
\end{align*}
\end{rem}
We are unable to obtain a similar upper bound because we don't know how to rule out the existence of Landau-Siegel exceptional zeros. Indeed, an additional term coming from a hypothetical exceptional modulus $m\leq (\log_2 Q)^{O(1)} $, discovered by Lamzouri and Mangerel in \cite{LaMa22}, is responsible for the term $(\log V)^{o(1)}$ in the estimate for the distribution function in Theorem \ref{Thm:Main}. 

\subsection{Key ideas in the proof of Theorem \ref{Thm:Main}}
 Our proof of the upper bound of Theorem \ref{Thm:Main} combines ingredients from the work of Lamzouri and Mangerel \cite{LaMa22}, which relies on the pretentious theory of character sums developed by Granville and Soundararajan \cite{GrSo07}, with the recent work of Lamzouri \cite{La22} on the distribution of quadratic character sums.  However, several new technical difficulties arise in our case, notably with respect to the cubic large sieve. Indeed, the method of Lamzouri in \cite{La22}
is based on the quadratic large sieve, and it turns out that the cubic large sieve of Baier-Young \cite{BaYo10} (or that of Heath-Brown \cite{HB00}) is not suitable for our purposes. To overcome this problem, we replace the cubic large sieve by the classical large sieve (over all primitive characters $\chi\pmod q$ with $q\leq Q$) to show that the portion of the tail in the P\'olya Fourier expansion of a cubic character $\chi$ (see \eqref{Eq:PolyaFourierExpansion} below) from $Q^{\ep}$ to $Q^{1/2+\delta}$ is small very often. We then handle the remaining range (which we further divide into several pieces) using a new simple cubic large sieve inequality 
which follows from \cite{DDLL23}. This is a consequence of cubic reciprocity and the P\'olya-Vinogradov inequality for Hecke characters over $\mathbb{Q}(\omega_3)$. A weaker version for cubic Dirichlet characters with prime moduli was obtained by  Elliott \cite[Lemma 33]{El70}.    
Now, to prove the lower bound of Theorem \ref{Thm:Main} we combine ideas from the work of Lamzouri and Mangerel with the recent work of Darbar, David, Lal\'in, and Lumley \cite{DDLL23} on the distribution of $L(1, \chi)$ as $\chi$ varies in $\F$.
 \subsection{A structure result for large cubic character sums}
Granville and Soundararajan \cite{GrSo07} showed that if $M(\chi)$ is large then $\chi$ must ``pretend'' to be a character $\xi_{\chi}$ of small conductor and opposite parity.  Building on their work, Bober, Goldmakher, Granville, and Koukoulopoulos \cite{BGGK18} proved that for almost all primitive characters $\chi\pmod q$ with $M(\chi)$ large, $\xi_{\chi}$ is the trivial character if $\chi$ is odd, and $\xi_{\chi}= \left(\frac{\cdot}{3}\right)$ if $\chi$ is even. Similar results were obtained for the family of quadratic characters by Dong, Wang, and Zhang \cite{DWZ23}, by combining the ideas of \cite{BGGK18} with the work of Lamzouri \cite{La22}.
In the case of cubic characters $\chi$, we prove however that $\xi_{\chi}$ must have a relatively large conductor. Indeed, our Theorem \ref{Thm:Structure} below shows that for almost all cubic characters $\chi\in \F$ with $M(\chi)>V$, $\chi$ pretends to be $\xi(n)n^{it}$ (for some small $t$) where $\xi$ is an odd character of conductor $m=(\log V)^{1/2+o(1)}.$ We also show that for these cubic characters we have  $$M(\chi)=m^{-1/2+o(1)} \big|L(1+it, \chi\overline{\xi})\big|.$$

For a primitive character $\chi\pmod q$, we denote by $N_{\chi}$ a point at which the maximum in $M(\chi)$ is attained (that is $M(\chi)=|\sum_{n\leq N_{\chi}}\chi(n)|/\sqrt{q}$) and put $\alpha_{\chi}:=N_{\chi}/q$. It follows from the work of Montgomery and Vaughan \cite{MV79} that if $\chi$ is even and $M(\chi)$ is large then\footnote{This is different in the case of odd characters because of the extra factor $\sum_{1\leq |n|\leq q^{1/2+\delta}}\chi(n)/n\approx 2L(1,\chi) $ in the P\'olya Fourier expansion \eqref{Eq:PolyaFourierExpansion} for such characters.}  $\alpha_{\chi}$ must be close to a rational with a small denominator. Bober, Goldmakher, Granville, and Koukoulopoulos investigated the location of $N_{\chi}$ in \cite{BGGK18}, and showed the interesting fact that for almost all primitive even characters $\chi\pmod q$, one has $N_{\chi}/q\approx 1/3$ or $2/3$. This is also the case for even quadratic characters (see \cite{DWZ23}). In the case of cubic characters, however, we prove the surprising fact that for almost all $\chi\in \F$ with $M(\chi)>V$, $N_{\chi}/q$ is very close to a reduced fraction $a/b$ with a relatively large denominator $b=(\log V)^{1/2+o(1)}.$

In order to state our result, we first need some notation. Let $\ep>0$ be small and fixed, and $Q$ be large. Let $V$ be in the range $C_1\leq V\leq (\log_2 Q)^{\beta}(\log_3 Q)^{-1/4-\ep}$, for some suitably large constant $C_1>0$. For such a $V$ we put 
\begin{equation}\label{Eq:DefinitionYV}
y:=y(V)=\exp(V^{1/\beta}(\log V)^{1/(4\beta)+\ep/2}).  
\end{equation}
For a primitive character $\chi\pmod q$ we let $t=t_{\chi}\in [-(\log y)^{-\frac{7}{11}}, (\log y)^{-\frac{7}{11}}]$ and 
$\xi=\xi_{\chi} \pmod m$ be the primitive character with conductor less than $(\log y)^{4/11}$, which minimize the quantity  $$\sum_{p\leq y}\frac{1-\re(\chi(p)\overline{\xi(p)})}{p^{1+it}}.$$ 
\begin{thm}\label{Thm:Structure}
With the same notation as above there exists a set $\mathcal{C}_Q(V)\subset \{ \chi\in \F \ : \ M(\chi)>V \}$ such that 
$$ \# \mathcal{C}_Q(V)= \left(1+O\left(\exp\left(-\exp(V^{1/\beta}(\log V)^{1/(4\beta)})\right)\right)\right)\#\left\{ \chi\in \F : \ M(\chi)>V \right\},$$ and for all $\chi\in \mathcal{C}_Q(V)$ we have
\begin{itemize}
\item[1.] If $a/b$ is the reduced fraction for which $|\alpha_{\chi}-a/b|\leq 1/(bB)$ with $b\leq B:=e^{V/\log V}$, then $b=(\log V)^{1/2+O(\ep)}$.
    \item[2.] $\xi$ is odd, $m\mid b$  and $m=(\log V)^{1/2+O(\ep)}.$
    \item[3.] We have 
    $$ M(\chi)=\frac{1}{m^{1/2+O(\ep)}} \left|L(1+it, \chi\overline{\xi})\right|.$$
\end{itemize}

\end{thm}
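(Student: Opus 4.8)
The plan is to build the set $\mathcal{C}_Q(V)$ as the intersection of several large subsets of $\{\chi\in\F:M(\chi)>V\}$, each obtained by discarding an exceptional family of size $\exp(-\exp(V^{1/\beta}(\log V)^{1/(4\beta)}))\#\{\chi\in\F:M(\chi)>V\}$ or smaller, and then to establish properties (1)--(3) for every surviving character. The starting point is the P\'olya Fourier expansion \eqref{Eq:PolyaFourierExpansion} of a cubic character together with the pretentious structure theory of Granville--Soundararajan \cite{GrSo07} as refined by Lamzouri--Mangerel \cite{LaMa22}: if $M(\chi)>V$ then, with $y=y(V)$ as in \eqref{Eq:DefinitionYV}, the minimizing pair $(t,\xi)=(t_\chi,\xi_\chi)$ satisfies a good lower bound on $M(\chi)$ in terms of $m^{-1/2}|L(1+it,\chi\overline\xi)|$ and a distance-function inequality forcing $\chi$ to pretend to be $\xi(n)n^{it}$ up to height $y$. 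The first reduction is to restrict to the set where this pretentiousness is \emph{sharp}, i.e.\ where $\sum_{p\le y}(1-\re(\chi(p)\overline{\xi(p)}p^{-it}))/p$ is as small as the lower bound for $M(\chi)$ permits; the complement is small by the upper-bound machinery of Theorem \ref{Thm:Main} (the same large-sieve / cubic-large-sieve input used there shows that characters pretending to be $\xi(n)n^{it}$ only \emph{weakly} cannot have $M(\chi)>V$ too often).

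Next I would pin down the conductor $m$. The key arithmetic constraint is parity: $L(s,\chi\overline\xi)$ contributes a factor $\sim L(1+it,\chi\overline\xi)$ in the P\'olya expansion precisely when $\chi\overline\xi$ is \emph{odd}, i.e.\ when $\xi$ has opposite parity to $\chi$; since every $\chi\in\F$ is even (a primitive cubic character of conductor coprime to $3$ is even, as $\chi(-1)=\chi(-1)^3=\chi(-1)$ forces $\chi(-1)\in\{1\}$ once one uses that $-1$ is not a cube mod $q$ only in the relevant components — more simply, $\chi(-1)^3=\chi((-1)^3)=\chi(-1)$ and $\chi(-1)$ is a cube root of unity, hence $\chi(-1)=1$), $\xi$ must be odd. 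This gives property (2)'s parity claim immediately. For the size of $m$: from the work of Darbar--David--Lal\'in--Lumley \cite{DDLL23} one knows the distribution of $L(1+it,\chi\overline\xi)$ (more precisely of $\log|L(1+it,\chi\overline\xi)|$ via its Euler-product / random-model description) as $\chi$ varies, so the event $m^{-1/2}|L(1+it,\chi\overline\xi)|>V$ has probability that one can compute as a function of $m$; this probability is maximized, on the logarithmic scale, for $m$ of size exactly $(\log V)^{1/2+o(1)}$ — smaller $m$ forces $|L|$ to be unnaturally large (too costly), larger $m$ wastes the $m^{-1/2}$ factor (also too costly). Balancing the two effects, exactly as in the moment computations behind Theorem \ref{Thm:Main}, shows that outside an exceptional set of the stated size one has $m=(\log V)^{1/2+O(\ep)}$, and then property (3) follows because on this set the inequality $M(\chi)\ge m^{-1/2+o(1)}|L(1+it,\chi\overline\xi)|$ is complemented by a matching upper bound (the tail of the P\'olya expansion beyond $y$ being negligibly small for all but an exceptional set, by the large-sieve arguments of Theorem \ref{Thm:Main}).

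Finally, for property (1) — the location of $\alpha_\chi=N_\chi/q$ — I would insert the structure $\chi(n)\approx\xi(n)n^{it}$ into the P\'olya Fourier expansion and perform the main-term analysis: the dominant contribution to $\sum_{n\le t}\chi(n)/\sqrt q$ comes from the frequencies $\ell$ in $\widehat\chi(\ell)e(-\ell\alpha)$ for which $\ell$ is close to a multiple of $q/m$ (equivalently, for which the twisted Gauss sum of $\chi$ against $\xi$ is large), and summing these yields an expression of the form $c\cdot L(1+it,\chi\overline\xi)\cdot g(m\alpha)$ for an explicit periodic function $g$ whose maximum over $\alpha\in[0,1)$ is attained at points where $m\alpha$ is close to an integer $a$ with $(a,m)$ controlled — forcing $\alpha_\chi$ near a fraction $a/b$ with $m\mid b$ and $b$ of size $m=(\log V)^{1/2+O(\ep)}$; the precision $|\alpha_\chi-a/b|\le 1/(bB)$ with $B=e^{V/\log V}$ comes from the fact that a deviation larger than this would drop $|g(m\alpha)|$ by a factor that our sharp lower bound on $M(\chi)$ does not permit. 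The main obstacle, as in Theorem \ref{Thm:Main}, is the possible presence of a Landau--Siegel exceptional modulus $m'\le(\log_2 Q)^{O(1)}$: the extra term it contributes to the P\'olya expansion (the phenomenon isolated in \cite{LaMa22}) could in principle compete with the main term and perturb both $\xi_\chi$ and $\alpha_\chi$; handling this — by showing that such a modulus, if it exists, only affects a set of characters of size within the claimed error, using its interaction with the cubic family via \cite{DDLL23} — is the delicate part and the source of the $(\log V)^{O(\ep)}$ (rather than $(\log V)^{o(1)}$ with an explicit error) in properties (1)--(3).
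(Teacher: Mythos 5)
Your overall skeleton is right: you define $\mathcal{C}_Q(V)$ by discarding the characters whose Pólya tail beyond $y$ is large (controlled by Theorem \ref{Thm:TailPolya}), and you correctly identify that $m\asymp(\log V)^{1/2}$ comes from balancing the $m^{-1/2}$ prefactor against the pretentious-distance penalty. But the mechanism you propose for pinning down $m$ and $b$ is not the one that works, and as described it has a genuine gap. You want to determine $m$ by a counting argument: compute, via the distribution of $L(1+it,\chi\overline{\xi})$ from \cite{DDLL23}, the ``probability'' that $m^{-1/2}|L|>V$ as a function of $m$, and discard the suboptimal $m$. The trouble is that these probabilities are all of the shape $\exp(-\exp(V^{1/\beta}(\log V)^{1/(4\beta)+o(1)}))$, and the available moment estimates (Theorem \ref{Thm: L(1, chipsi)} is only sharp up to $(\log_2 V)^{O(1)}$ factors in the inner exponent, precisely because of possible exceptional moduli) do not let you discriminate between them finely enough to conclude that the ``wrong-$m$'' characters form a negligible subset of $\{M(\chi)>V\}$. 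The paper avoids this entirely with a \emph{pointwise deterministic} argument: for every $\chi$ with $M(\chi)>V$ and small tail, Dirichlet approximation plus Goldmakher's minor-arc bound forces $b\le(\log y)^{4/11}$, then the major-arc estimate (Lemma \ref{Lem: major1}) forces $\xi$ odd and $m\mid b$ and gives $M(\chi)\ll b^{-1/2+o(1)}(\log y)\exp(-\mathcal{M}(\chi\overline{\xi};y,(\log y)^{-7/11}))$, and Lemma \ref{Lem: major2} bounds $\exp(-\mathcal{M})\ll(\log y)^{\beta-1-c_0/m^2}m^{o(1)}$. Feeding $M(\chi)>V$ and $(\log y)^\beta=V(\log V)^{1/4+O(\ep)}$ into these two inequalities forces both $b\le(\log V)^{1/2+O(\ep)}$ and (after the substitution $u=m/\sqrt{\log V}$) $m=(\log V)^{1/2+O(\ep)}$; with $m\mid b$ this pins down $b$ as well. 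Your sketch of Part 1 via ``twisted Gauss sums and a periodic function $g(m\alpha)$'' misses the key quantitative input, namely the $b^{-1/2}$ saving in the major-arc bound, which is what rules out both large $b$ and $b$ not divisible by $m$; note also that $|\alpha_\chi-a/b|\le 1/(bB)$ is part of the \emph{definition} of $a/b$ (Dirichlet's theorem), not something to be proved.

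Two further points. First, for Part 3 you assert that the lower bound $M(\chi)\gg m^{-1/2+o(1)}|L(1+it,\chi\overline{\xi})|$ is ``complemented by a matching upper bound,'' but you do not explain how to pass from the distance function $\mathcal{M}$ (which is what the major-arc machinery actually controls) to the genuine $L$-value: this requires writing $(\log y)\exp(-\mathcal{M})\asymp\big|\prod_{p\le y}(1-\chi(p)\overline{\xi(p)}p^{-1-it})^{-1}\big|$ and then showing this truncated Euler product equals $L(1+it,\chi\overline{\xi})+O(\log V)$, which uses Pólya--Vinogradov together with the very tail hypothesis \eqref{Eq:StrongAssumptionTail} defining $\mathcal{C}_Q(V)$ (applied to the sum twisted by $\overline{\xi}$ via Gauss sums). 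Second, your closing paragraph treats the possible Landau--Siegel modulus as the delicate point requiring a separate excision of characters; in fact no such excision is needed here, since the exceptional-modulus penalty $-\ep\log m$ in Lemma \ref{Lem: major2} is simply absorbed into the $(\log V)^{O(\ep)}$ factors in all three conclusions. (Your parity computation showing cubic characters are even is correct, but the oddness of $\xi$ is deduced in the proof from the dichotomy in Lemma \ref{Lem: major1} together with $\beta>9/11$, not from a direct parity argument on the Pólya expansion.)
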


The proof of Theorem \ref{Thm:Structure} combines ingredients from the work of Lamzouri and Mangerel \cite{LaMa22}, together with our Theorem \ref{Thm:TailPolya} below, which bounds the frequency of how large can the  tail in the P\'olya Fourier expansion of a cubic character twisted by $n^{it}$ (uniformly for $t\in [0,1]$) be.

 \subsection{Notation} We will use standard notations in this paper. However, for the convenience of readers, we would like to highlight a few of them. For any real number $x$, we write $e(x)$ to denote $e^{2\pi i x}$. Expressions of the form ${f}(x)=O({g}(x))$, ${f}(x) \ll {g}(x)$, and ${g}(x) \gg {f}(x)$ signify that $|{f}(x)| \leq C|{g}(x)|$ for all sufficiently large $x$, where $C>0$ is an absolute constant. A subscript of the form $\ll_A$ means the implied constant may depend on the parameter $A$. The notation ${f}(x) \asymp {g}(x)$ indicates that ${f}(x) \ll {g}(x) \ll {f}(x)$. Next, we write $f(x)=o(g(x))$ if $\lim_{x\to \infty}f(x)/g(x)=0$. Furthermore, given a positive integer $n$, we let $P^+(n)$ and $P^-(n)$ be the largest and smallest prime divisors of $n$, with the standard convention that $P^+(1)=1$ and $P^-(1)=\infty$. 
The letter $p$ always denotes a prime number, unless otherwise mentioned. Moreover, the various constants $c$'s appearing in the proofs may not all be the same.


 \section{Key components of the proof of Theorem \ref{Thm:Main}} \label{Sec: thmmain}
 The first step in the proof of Theorem \ref{Thm:Main}
 is to use 
 the P\'olya Fourier expansion for a primitive character $\chi\pmod q$ (see \cite[eqn. (9.19), p.311]{MV07}), which gives
 \begin{equation}\label{Eq:PolyaFourierExpansion}
   \sum_{n\leq t}\chi(n)=\dfrac{\tau(\chi)}{2\pi i}\sum_{1\leq |n|\leq Z}\dfrac{\chi(n)}{n}\bigg(1-e\bigg(\dfrac{-nt}{q}\bigg)\bigg) + O\bigg(1+\dfrac{q\log q}{Z}\bigg),  
 \end{equation}
 where $\tau(\chi)$ is the Gauss sum attached to $\chi$ and $Z\geq 1$ is a parameter to be chosen appropriately. Since we are restricting ourselves to cubic characters $\chi$, we have $\chi(-1)=-1$. This implies  $\sum_{1\leq |n|\leq Z}\chi(n)/n=0$ and hence 
\begin{equation}\label{Eq:PolyaFourier2}\sum_{n\leq t}\chi(n)=-\dfrac{\tau(\chi)}{2\pi i}\sum_{1\leq |n|\leq Z}\dfrac{\chi(n)}{n}e\bigg(\dfrac{-nt}{q}\bigg) + O\bigg(1+\dfrac{q\log q}{Z}\bigg).
 \end{equation}
  Choosing $Z:=Q^{21/40}$ and using the fact that $|\tau(\chi)|=\sqrt{q}$ (see \cite[Theorem 9.7]{MV07}), we deduce
 \begin{align}\label{Eq:RefinedPolyaFourierExpansion}
     M(\chi)=\dfrac{1}{2\pi}\max_{\alpha\in [0,1)}\bigg|\sum_{1\leq |n|\leq Z}\dfrac{\chi(n)e(n\alpha)}{n}\bigg| + O(1).
 \end{align}
Next, following Montgomery-Vaughan \cite{MV77}, we decompose the above sum on the right-hand side into two parts so that
\begin{align}\label{Eq:MV}
     M(\chi)\leq \dfrac{1}{2\pi}\max_{\alpha\in [0,1)}\bigg|\sum_{\substack{1\leq |n|\leq Z\\P^+(n)\leq y}}\dfrac{\chi(n)e(n\alpha)}{n}\bigg| + \dfrac{1}{\pi}\max_{\alpha\in [0,1)}\bigg|\sum_{\substack{1\leq n\leq Z\\P^+(n)> y}}\dfrac{\chi(n)e(n\alpha)}{n}\bigg| + O(1),
\end{align}
where the parameter $y$ will be chosen appropriately. We will show, similarly to \cite{BGGK18} and  \cite{La22}, that the bulk of the contribution in the above sum comes from the smooth part:
\begin{equation}\label{Eq:SmoothPart}
\max_{\alpha\in [0,1)}\bigg|\sum_{\substack{1\leq |n|\leq Z\\P^+(n)\leq y}}\dfrac{\chi(n)e(n\alpha)}{n}\bigg|.
\end{equation}
More precisely, the implicit upper bound in Theorem \ref{Thm:Main} will follow from the following two results. The first is an upper bound on the smooth 
part \eqref{Eq:SmoothPart}.
 \begin{thm}\label{Thm: smooth}
Let $Q$ be large. Let $Z=Q^{21/40}$ and $\chi\in \mathcal{F}_3(Q)$. If $y\leq \log Q$, then 
\begin{align*}
    \max_{\alpha\in [0,1)}\bigg|\sum_{\substack{1\leq |n|\leq Z\\ P^+(n)\leq y}}\dfrac{\chi(n)e(n\alpha)}{n}\bigg|\ll (\log y)^{\beta}(\log_2y)^{-1/4+o(1)},
\end{align*}
where $\beta$ is given by \eqref{Def:beta}.
    
\end{thm}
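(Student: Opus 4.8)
The plan is to bound the smooth part \eqref{Eq:SmoothPart} by relating it to a logarithmic mean value of the completely multiplicative function obtained by twisting $\chi$ with an additive character, and then invoking a Halász-type inequality as in Lamzouri--Mangerel \cite{LaMa22}. First I would fix $\alpha\in[0,1)$ and by Dirichlet's theorem choose a reduced fraction $a/b$ with $|\alpha - a/b|\le 1/(bR)$ for a suitable parameter $R$ (of size roughly a power of $Z$). The standard strategy is then to split the $n$-sum according to whether $n$ is ``close'' to a multiple of $b$ or not; the oscillation of $e(n\alpha)$ away from such $n$ produces cancellation. The key point — exactly as in \cite{MV77, BGGK18, La22} — is that the smooth numbers $n\le Z$ with $P^+(n)\le y$ that survive are essentially those of the form $n = b m$ with $m$ smooth, so after this reduction the sum becomes, up to acceptable error, $\chi(b)e(ba/b\cdot(\text{stuff}))$ times $\sum_{m}\chi(m)/m$ over $y$-smooth $m$, i.e. a logarithmic sum of the completely multiplicative function $\chi$ restricted to $y$-smooth support. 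One must also control the contribution of $b$ itself: only $b$ that are themselves $y$-smooth (or have small smooth part) contribute, and for such $b$ one has $1/b \ll 1$, while non-smooth $b$ force the whole sum to be small by a separate large-sieve-free argument using the size of $b$.

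Next I would bound the resulting logarithmic sum $\big|\sum_{m\le Z,\, P^+(m)\le y}\chi(m)/m\big|$ (and its analogue twisted by the small Dirichlet character $\xi$ coming from the pretentious structure) by the Halász-type inequality for logarithmic mean values of completely multiplicative functions that Lamzouri and Mangerel established; this is precisely the mechanism that yields the exponent $\beta = 3\sqrt 3/(2\pi)$ together with the $(\log_2 y)^{-1/4+o(1)}$ refinement. Here one uses that $\chi$ is cubic, so $\chi(p)\in\{1,\omega_3,\overline{\omega_3}\}$, and the extremal configuration for $\sum_{p\le y}(1-\re\chi(p))/p$ is governed by the optimization that produces $\beta$; the $(\log_2 y)^{-1/4}$ saving comes from the second-order term in the relevant mean-value / integral equation (the ``$-1/4$'' is the same exponent that already appears in \eqref{Eq:UpperLamzouriMangerel}). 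Since $y\le\log Q$, all the smooth-number estimates (Rankin's trick, de Bruijn-type bounds) are comfortably in range and the tail $m>Z$ is negligible because $Z$ is a fixed power of $Q$ while $y$ is tiny.

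The main obstacle I expect is the passage from the additive-character sum over smooth $n\le Z$ to the multiplicative logarithmic sum while keeping the dependence on $\alpha$ uniform — in particular, handling the denominator $b$ of the rational approximation cleanly. Unlike the prime-modulus case, here $\chi$ ranges over all cubic characters with $(q,3)=1$, so one cannot use exact orthogonality; one must argue pointwise in $\chi$ and absorb $b$ either by smoothness (bounded contribution) or by its size (strong decay). A secondary technical nuisance is that the sum runs over $1\le|n|\le Z$ including negative $n$, so one should symmetrize using $\chi(-1)=-1$ and the odd symmetry already exploited in \eqref{Eq:PolyaFourier2}, which pairs $n$ with $-n$ and turns the exponential into a sine; this is routine but must be tracked to avoid losing the constant. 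Once these reductions are in place, the theorem follows by quoting the Lamzouri--Mangerel Halász inequality essentially verbatim.
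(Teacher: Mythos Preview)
Your proposal has a genuine gap in the reduction step, and the parameters are set incorrectly.

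First, the Dirichlet approximation parameter $R$ should \emph{not} be a power of $Z$; the paper takes $R=(\log y)^5$, and then splits into minor arcs $r>M:=(\log y)^{4/11}$ and major arcs $r\le M$. On the minor arcs one applies Goldmakher's bound \cite[Corollary~2.2]{Gol} directly to obtain $\ll (\log y)^{9/11+o(1)}$, which is already below $(\log y)^{\beta}$. With your choice $R\approx Z^c$ the minor-arc machinery does not apply and there is no mechanism to control large denominators.

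Second, on the major arcs the passage from $e(n\alpha)$ to a multiplicative object is not done by ``restricting to $n=bm$'' or by cancellation away from multiples of $b$; that claim is simply false (the map $n\mapsto e(na/b)$ is periodic mod $b$, not oscillatory within a period). After replacing $\alpha$ by $a/b$ via \cite[Lemma~4.1]{Gol}, one expands $e(na/b)$ into Dirichlet characters modulo $b$, so the sum becomes a short linear combination of logarithmic means $\sum_{P^+(n)\le y}\chi(n)\overline{\psi(n)}/n$ over $\psi\pmod b$. The dominant $\psi$ is the primitive $\xi\pmod m$, $m\mid b$, minimizing $\mathcal{M}(\chi\overline{\psi};y,(\log y)^{-7/11})$; this is exactly the content of the paper's Lemma~\ref{Lem: major1} (from \cite{LaMa22}). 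The smooth/non-smooth dichotomy you propose for $b$ plays no role.

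Finally, the saving $(\log_2 y)^{-1/4}$ does not come from a ``second-order term'' in Hal\'asz. It arises by combining the Gauss-sum factor $m^{1/2}/\varphi(m)\asymp m^{-1/2+o(1)}$ in Lemma~\ref{Lem: major1} with the lower bound (Lemma~\ref{Lem: major2})
\[
\mathcal{M}(\chi\overline{\xi};y,(\log y)^{-7/11})\ge (1-\beta)\log_2 y + \frac{c}{m^2}\log_2 y + O(\log_2 m),
\]
and then optimizing $\exp\big(-\tfrac12\log m - \tfrac{c}{m^2}\log_2 y\big)$ over $m$; the maximum occurs at $m\asymp(\log_2 y)^{1/2}$, which produces the exponent $-1/4$. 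Your proposal never isolates the conductor $m$ as a free parameter, so this optimization cannot be carried out.
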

Our next result shows that the second part in \eqref{Eq:MV} is bounded for most cubic characters $\chi\in \F$. In fact, we prove a stronger result where we also twist by $n^{it}$, uniformly for $t\in [0, 1]$. This will be useful in the proof of our structure result Theorem \ref{Thm:Structure}.
\begin{thm}\label{Thm:TailPolya}
Let $h$ be a completely multiplicative function such that $|h(n)|\leq 1$ for all integers $n\geq 1$. Let $Q$ be large and  $Z=Q^{21/40}$. There exist positive constants $b_1, b_2$, and $b_3$  such that for all real numbers $b_1\leq y\leq \log Q/(b_2\log_2Q)$,  the number of characters $\chi\in \F$ such that
$$ \max_{t\in [0,1]}\max_{\alpha\in [0, 1)}\Bigg|\sum_{\substack{1\leq n\leq Z\\ P^{+}(n)> y}} \frac{\chi(n) h(n) e(n\alpha)}{n^{1+it}}\Bigg|> 1$$
is 
$$ \ll Q\exp\left(-b_3\frac{y}{\log y}\right).$$
\end{thm}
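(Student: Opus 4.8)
The plan is to control the tail sum $\sum_{\substack{1\le n\le Z,\, P^+(n)>y}} \chi(n)h(n)e(n\alpha)/n^{1+it}$ by a second-moment (large sieve) argument, after first removing the dependence on the continuous parameters $\alpha$ and $t$ by a discretization. Since $e(n\alpha)$ and $n^{-it}$ are $1$-Lipschitz in $\alpha$ and in $t$ up to a factor of $n$ (and $n\le Z = Q^{21/40}$), I would replace the suprema over $\alpha\in[0,1)$ and $t\in[0,1]$ by maxima over a finite net of cardinality $\ll Q^{2}$ (say, spacings $1/Q^2$), at the cost of an acceptable error. Concretely, for $\alpha,\alpha'$ at distance $\le 1/Q^2$ and $n\le Z$ one has $|e(n\alpha)-e(n\alpha')|\ll n/Q^2\le Q^{-19/40}$, and summing $1/n$ over $n\le Z$ loses only a factor $\log Q$; similarly for $t$. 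So it suffices to bound the number of $\chi\in\F$ for which $\big|\sum_{\substack{n\le Z,\,P^+(n)>y}}\chi(n)h(n)e(n\alpha)n^{-1-it}\big|>1/2$ for \emph{some} pair $(\alpha,t)$ in the net, and then sum over the $\ll Q^2$ pairs.

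Next I would split the tail dyadically, $n\in(N,2N]$ with $N$ ranging over powers of $2$ up to $Z$, writing the inner sum as $\sum_N S_N(\chi;\alpha,t)$ where $S_N(\chi;\alpha,t)=\sum_{\substack{N<n\le 2N,\, P^+(n)>y}}\chi(n)h(n)e(n\alpha)n^{-1-it}$, so that if the whole sum exceeds $1/2$ then some dyadic block satisfies $|S_N(\chi;\alpha,t)|\gg 1/(\log Q)^2$ (there are only $O(\log Q)$ blocks). For each fixed $N,\alpha,t$ I would apply the classical large sieve over all primitive characters mod $q$ with $q\le Q$: the number of $\chi\in\F$ with $|S_N|>\lambda$ is at most $\lambda^{-2k}$ times the $2k$-th power-moment, i.e.\ a large-sieve bound for the $k$-fold self-convolution of the coefficient sequence $a_n = h(n)e(n\alpha)n^{-1-it}\mathbf 1_{P^+(n)>y}$ supported on $(N,2N]$. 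By Montgomery--Vaughan's large sieve, $\sum_{q\le Q}\,\sideset{}{^*}\sum_{\chi\bmod q}|\sum_m b_m\chi(m)|^2\ll (Q^2+M)\sum_m|b_m|^2$, where here $m$ ranges over products of $k$ integers from $(N,2N]$, so $M\le (2N)^k\le Z^k$. The crucial point is the divisor-saving: because each $n$ in the support has a prime factor exceeding $y$, the $\ell^2$-norm of the $k$-fold convolution restricted to such sequences is smaller by a factor roughly $(\log y/\log N)^{?}$ per factor — more precisely, by Rankin's trick / a sieve bound, $\sum_{\substack{N<n\le 2N,\,P^+(n)>y}} n^{-2}\ll N^{-1}(\log y)/\log N$ up to constants (a fraction $\asymp \log y/\log N$ of integers near $N$ is $y$-rough is \emph{false}; rather the proportion with \emph{no} prime factor $>y$ is $\rho(u)$ with $u=\log N/\log y$, so the rough ones dominate and this gives no saving — instead I must keep the smooth \emph{complement} small, i.e.\ it's the $P^+(n)\le y$ part that's rare, so I should rewrite the constraint as a gain). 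Let me restate: the point is to choose $k\asymp y/\log y$ and exploit that the coefficient sequence is supported away from $y$-smooth numbers so that after taking $k$-th convolutions the mass is concentrated on integers with $\gg k$ distinct large prime factors, making the number of representations small.

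The cleanest route, following Lamzouri \cite{La22} and the smooth-number heuristics in \cite{BGGK18}, is: bound the probability that $|S_N(\chi;\alpha,t)|>\lambda$ by $\lambda^{-2k}\ex_\chi|S_N|^{2k}$, expand the $2k$-th moment, use the classical large sieve to replace the average over $\chi\in\F$ by the diagonal (those $2k$-tuples $(n_1,\dots,n_{2k})$ with $n_1\cdots n_k = n_{k+1}\cdots n_{2k}$) plus an off-diagonal term bounded by $Q^{-2}(2Z)^{2k}/\#\F$, and note the diagonal contributes $\ll \big(\sum_{\substack{N<n\le 2N,\,P^+(n)>y}} n^{-2}\big)^k\cdot (k!)\cdot(\text{something})$; crucially $\sum_{\substack{n\le Z,\,P^+(n)>y}}n^{-2}\ll 1/y$ by Mertens-type estimates (the sum of $1/n^2$ over \emph{all} $n>y$ being $\ll 1/y$, and $P^+(n)>y$ forces $n>y$). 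Hence each of the $2k$ "variables", when it lies in the rough set, contributes a factor $O(1/y)$ rather than $O(\log Q)$, and the moment bound becomes roughly $(Ck/y)^{k}$ for a constant $C$, so with $\lambda = 1/(\log Q)^{2}$ and $k = \lfloor y/(Ce\log y)\rfloor$ the bound $\lambda^{-2k}(Ck/y)^k \ll (\log Q)^{4k}(1/(e\log y))^k\ll \exp(-k\log_2 y + O(k\log_3 Q))\ll\exp(-b_3 y/\log y)$ once $y\le \log Q/(b_2\log_2 Q)$ guarantees $\log_2 y \gg \log_3 Q$; summing the off-diagonal over the $\ll Q^2$ net points and the $O(\log Q)$ dyadic blocks contributes $\ll Q\cdot Q^2(\log Q)(2Z)^{2k}/(\#\F\cdot\lambda^{2k})$, which is negligible since $\#\F\gg Q$ and $(2Z)^{2k}=Q^{(21/20+o(1))k}$ is dwarfed by the $Q^{-2k+2}$ one might fear — wait, the off-diagonal in Montgomery--Vaughan is $M\sum|b_m|^2$, not $Q^{-2}M$, so I must instead ensure $M = (2N)^k \le Q^2$, i.e. restrict to $N$ with $N^k\le Q^2$; for larger $N$ the saving $\sum 1/n^2 \ll 1/N \le 1/y$ is even stronger and a shorter range of $k$ suffices, or one simply notes $Z^k\le Q^2$ forces $k\le 4$ in the worst dyadic block $N\asymp Z$, which is too small — so the honest approach is to handle the top dyadic ranges ($N$ large) directly using that $\sum_{N<n\le 2N}n^{-2}\ll 1/N$ is already tiny and pointwise $|S_N|\le \sum_{N<n\le 2N}n^{-1}\ll 1$ isn't enough, hence use the large sieve with $k=1$ there getting $\#\{\chi: |S_N|>\lambda\}\ll (Q^2+2N)\cdot(1/N)/\lambda^2\ll Q^2(\log Q)^5/N$, summed over $N\ge Q^{2}$-ish... ).

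I expect the \textbf{main obstacle} to be exactly this bookkeeping of ranges: the classical large sieve only gives a genuine (better-than-trivial) saving when the conductor $Q^2$ dominates the length $M$ of the convolved sequence, so one cannot take $k$ as large as $y/\log y$ in the long dyadic blocks near $n\asymp Z = Q^{21/40}$; there $M=(2N)^k$ already exceeds $Q^2$ for $k\ge 4$. The resolution — and the technically delicate part — is a two-regime argument: for short blocks ($N\le Q^{\delta}$ for suitable small $\delta>0$) take $k\asymp y/\log y$ and run the moment/large-sieve computation above to get the $\exp(-b_3 y/\log y)$ bound; for long blocks ($N> Q^\delta$) the mere sparsity of $y$-rough integers together with a modest moment ($k$ bounded) and the fact that $\sum_{N<n\le 2N,\,P^+(n)>y}n^{-2}\ll N^{-1}\le Q^{-\delta}$ gives a power-saving $\ll Q^{2-c\delta}$ per block, which is absorbed into $Q\exp(-b_3 y/\log y)$ provided $y\ll \log Q$ (so $\exp(b_3 y/\log y)\le Q^{o(1)}$) — here the hypothesis $y\le \log Q/(b_2\log_2 Q)$ is what makes everything fit. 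The rest (discretization errors, summing over the net, Mertens estimates for $\sum_{P^+(n)>y}n^{-2}$, the combinatorics of the diagonal in the $2k$-th moment) is routine, following \cite{La22} and \cite{BGGK18} closely, with the completely multiplicative weight $h$ and the extra twist $n^{it}$ causing no difficulty since $|h(n)|\le 1$ and $|n^{-it}|=1$ throughout.
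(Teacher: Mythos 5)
Your outline gets the top range right: for $N\ge Q^{\delta}$ the classical large sieve with bounded $k$ and the trivial $\ell^2$-bound on the convolved coefficients gives a power saving in $Q$, exactly as in the paper's Lemma \ref{Lem:TailLarge}. But there is a genuine gap in the range that actually matters, $n\le Q^{\delta}$ and especially $n\le\exp((\log_2Q)^2)$, which is where the factor $\exp(-b_3y/\log y)$ must come from. The classical large sieve bounds $\sum_{q\le Q}\sum^{*}_{\chi\bmod q}|\sum_m b_m\chi(m)|^2$ by $(Q^2+M)\sum_m|b_m|^2$; its ``diagonal'' is calibrated to a family of $\asymp Q^2$ characters, whereas $\#\F\asymp Q$. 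Restricting to cubic characters by positivity therefore loses a factor of $Q$: the count of bad $\chi\in\F$ you obtain is $\ll\lambda^{-2k}Q^2\|b\|_2^2$, and since already for $k=1$ one has $\|b\|_2^2=\sum_{P^+(n)>y}n^{-2}\asymp 1/y\ge 1/\log Q\gg Q^{-1}$, this is $\gg Q$ and hence vacuous; no choice of $k$ repairs it. What is needed, and what the paper supplies as its main new input (Lemma \ref{Lem:Orthogonality}), is an orthogonality statement for the cubic family itself: $\sum_{\chi\in\F}\chi(m_1m_2^2)$ has main term $\ll Q$ supported on the diagonal ``$m_1m_2^2$ is a cube'' and off-diagonal $\ll\sqrt{Q}(\log Q)(m_1m_2)^{1/2+o(1)}$, via cubic reciprocity and P\'olya--Vinogradov for Hecke characters over $\mathbb{Q}(\omega_3)$. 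Your proposed diagonal $n_1\cdots n_k=n_{k+1}\cdots n_{2k}$ is also the wrong one for this family (since $\overline{\chi(m)}=\chi(m^2)$, the correct condition is that $n_1\cdots n_k\,(n_{k+1}\cdots n_{2k})^2$ be a cube), and the resulting diagonal divisor sum over rough integers, Lemma \ref{Lem:DivisorRoughBig1}, is what produces the gain $(Ck/(y\log y))^{k}$.

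Two further problems. First, your threshold allocation fails quantitatively: with $\lambda=(\log Q)^{-2}$ in the small blocks and $k\asymp y/\log y$, the Chebyshev factor is $\lambda^{-2k}=(\log Q)^{4k}=\exp(4k\log_2Q)$, not $\exp(O(k\log_3Q))$ as you wrote, and this swamps the moment gain $\exp(-ck\log_2 y)$ because $\log_2 y\le\log_2Q$. The paper avoids this by treating all of $[1,\exp((\log_2Q)^2)]$ as a single block with constant threshold (Proposition \ref{Pro:TailUnsmooth}) and starting the dyadic decomposition only at $\exp((\log_2Q)^2)$, so that the per-block thresholds $1/\ell^2$ cost only $\exp(O(k\log_3Q))$. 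Second, the support condition is $P^{+}(n)>y$, not $P^{-}(n)>y$; to make roughness bite in the $2k$-th moment one must first factor $n=ab$ with $a$ the $y$-smooth part and $b$ the $y$-rough part, using the complete multiplicativity of $h$ --- your heuristic that ``each variable contributes $O(1/y)$'' is the right intuition, but it is not justified by the argument as written.
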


We will prove Theorems \ref{Thm: smooth} and \ref{Thm:TailPolya} in Sections \ref{Sec: smooth part} and \ref{Sec: tail}, respectively. Before that, we explain how to use these results to deduce the upper bound in Theorem \ref{Thm:Main}.

\begin{proof}[Proof of the upper bound in Theorem \ref{Thm:Main} assuming Theorems \ref{Thm: smooth} and \ref{Thm:TailPolya}]
Let $Z=Q^{21/40}$. Then, by \eqref{Eq:MV}, we have
\begin{align}\label{Eq:ThmMainupper1}
     M(\chi)\leq \dfrac{1}{2\pi}\max_{\alpha\in [0,1)}\bigg|\sum_{\substack{1\leq |n|\leq Z\\P^+(n)\leq y}}\dfrac{\chi(n)e(n\alpha)}{n}\bigg| + \dfrac{1}{\pi}\max_{\alpha\in [0,1)}\bigg|\sum_{\substack{1\leq n\leq Z\\P^+(n)> y}}\dfrac{\chi(n)e(n\alpha)}{n}\bigg| + C_2,
\end{align}
for some positive constant $C_2$. By Theorem \ref{Thm: smooth}, for any $\ep>0$ we have
\begin{align}\label{Eq:ThmMainupper2}
    \dfrac{1}{2\pi}\max_{\alpha\in [0,1)}\bigg|\sum_{\substack{1\leq |n|\leq Z\\P^+(n)\leq y}}\dfrac{\chi(n)e(n\alpha)}{n}\bigg|\leq (\log y)^\beta (\log_2 y)^{-1/4+\ep},
\end{align}
if $y$ is sufficiently large. Let $\ep>0$ be small and put
\begin{align*}
    y=\exp\big((V-C_3)^{1/\beta}(\log V)^{1/(4\beta)-\ep}\big),
\end{align*}
where $C_3$ will be chosen appropriately.
With the above choice of $y$, we combine \eqref{Eq:ThmMainupper1} and \eqref{Eq:ThmMainupper2} to obtain
\begin{align*}
    M(\chi)\leq V- C_3 + \dfrac{1}{\pi}\max_{\alpha\in [0,1)}\bigg|\sum_{\substack{1\leq n\leq Z\\P^+(n)> y}}\dfrac{\chi(n)e(n\alpha)}{n}\bigg| + C_2.
\end{align*}
We choose $C_3=C_2+1/\pi$. Therefore, the proportion of $\chi\in \F$ such that $M(\chi)>V$ is bounded above by the proportion of $\chi\in \mathcal{F}_3(Q)$ such that
\begin{align*}
    \max_{\alpha\in [0, 1]}\bigg|\sum_{\substack{1\leq n\leq Z\\P^+(n)> y}}\dfrac{\chi(n)e(n\alpha)}{n}\bigg|>1.
\end{align*}
We apply Theorem \ref{Thm:TailPolya} with $h(n)=1$ for all $n\in \mathbb{N}$ to deduce that
\begin{align*}
     \frac{1}{\#\F}\#\{\chi \in \F\colon M(\chi)>V\} \leq \exp\big(-\exp\left(V^{1/\beta}(\log V)^{1/(4\beta)+o(1)}\right)\big),
\end{align*}
as desired.
\end{proof}

Let us now turn to the lower bound in Theorem \ref{Thm:Main}. Our key strategy is to relate $M(\chi)$ to the values of certain associated Dirichlet $L$-functions at $1$. To make this precise, let $q\leq Q$ be large and $m\leq Q/(\log Q)^2$. Let $\chi$ be a primitive cubic character modulo $q$ and $\psi$ be an odd primitive character modulo $m$. Then, by \cite[Proposition 5.1]{LaMa22}, we have
\begin{align}\label{Eq:Mchi and L1}
    M(\chi)\gg \dfrac{\sqrt{m}}{\varphi(m)}|L(1, \chi \overline{\psi})|.
\end{align}
We will choose $m$ to be a non-exceptional modulus. To this end, we define this notion below.

\begin{defin}\label{Def: exceptional}
We say that an integer $m\geq 1$ is an exceptional modulus if there exists a Dirichlet character $\chi_m$ and a complex number $s$ such that $L(s, \chi_m)=0$ and
\begin{align*}
    \re(s)\geq 1-\dfrac{c}{\log (m(\textup{Im}(s)+2))}
\end{align*}
for some sufficiently small constant $c>0$.
\end{defin}
Given the above definition, the lower bound in Theorem \ref{Thm:Main} is a direct consequence of the inequality \eqref{Eq:Mchi and L1} and the following result.

\begin{thm}\label{Thm: L(1, chipsi)}
Let $Q$ be large and 
\[C_4\leq V\leq (\log_2 Q)^\beta (\log_4 Q)^{-C_5},\]
where $\beta$ is given by \eqref{Def:beta} and $C_4, C_5$ are suitably large constants.    Let $m$ be a non-exceptional modulus which is prime and verifies $m\in [\sqrt{\log V}, 2\sqrt{\log V}]$. Let $\psi$ be a primitive Dirichlet character modulo $m$ of order $m-1$.  Then, we have
\begin{align*}
        \dfrac{1}{\# \mathcal{F}_3(Q)}\#\bigg\{\chi\in \mathcal{F}_3(Q)\colon 
        |L(1, \chi\overline{\psi})|>V\bigg\}= \exp\left(-\exp\left(V^{1/\beta}(\log_2 V)^{O(1)}\right)\right). 
    \end{align*}
\end{thm}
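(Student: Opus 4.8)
\textbf{Proof proposal for Theorem \ref{Thm: L(1, chipsi)}.}

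The plan is to estimate the distribution of $|L(1,\chi\overline{\psi})|$ over $\chi\in\mathcal{F}_3(Q)$ by the standard two-sided strategy: an upper bound via moments of a Dirichlet polynomial approximation to $\log L(1,\chi\overline{\psi})$, and a matching lower bound by exhibiting enough characters $\chi$ with $\chi(p)\overline{\psi(p)}$ close to $1$ on a suitable set of small primes. Since $\psi$ has order $m-1$ (essentially a "generic" character mod the prime $m$) and $m\asymp\sqrt{\log V}$, the function $\chi\overline{\psi}$ is a completely multiplicative unimodular function of modulus $qm$, and $\log|L(1,\chi\overline{\psi})| = \sum_{p}\sum_{k\ge1}\re\big((\chi\overline{\psi})(p^k)\big)/(kp^k)$. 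One first truncates: by the work of Darbar--David--Lal\'in--Lumley \cite{DDLL23} (whose random model and moment computations I would invoke), for all but a negligible proportion of $\chi$ one has $\log|L(1,\chi\overline{\psi})| = \sum_{p\le y_0}\re\big(\chi(p)\overline{\psi(p)}\big)/p + O(\log_3 Q)$ where $y_0$ is a small power of $\log Q$ (this uses the classical large sieve over primitive characters $\chi\pmod q$, $q\le Q$, to control the contribution of primes in $[(\log V)^{A},y_0]$, and a trivial bound on the very small primes together with the prime-power terms). The condition that $m$ is non-exceptional guarantees there is no Siegel-type zero of $L(s,\chi\overline{\psi})$ distorting this relation for the relevant $\chi$.

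For the upper bound on the tail, I would compute high moments $\frac{1}{\#\mathcal{F}_3(Q)}\sum_{\chi\in\mathcal{F}_3(Q)}\big(\sum_{p\le y_0}\re(\chi(p)\overline{\psi(p)})/p\big)^{2k}$. Expanding, the main term comes from the "diagonal", and because $\psi$ has large order the twist by $\overline{\psi}$ does not create spurious diagonal contributions (one needs that $\overline{\psi}$ restricted to the relevant primes is not pretentious to the trivial character, which holds since $m$ is prime and $\psi$ has order $m-1$); this is exactly the mechanism that separates cubic characters from quadratic ones. The diagonal is then $\asymp k!\big(\sum_{p\le y_0,\ \text{"good" }p}1/p\big)^k \sim k!(\log_2 y_0)^k$, and the off-diagonal is controlled by orthogonality of cubic characters — here I would use the simple cubic large sieve inequality of \cite{DDLL23} (cubic reciprocity plus P\'olya--Vinogradov for Hecke characters over $\mathbb{Q}(\omega_3)$) mentioned in the introduction — provided $y_0^{k}\le Q^{c}$, i.e. $k\ll \log Q/\log_2 Q$. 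Choosing $k$ optimally (so that $e^{\gamma}k\log_2 k \approx V^{1/\beta}\cdot(\text{const})$, reflecting the Euler-product extremal value for cubic values which gives the exponent $1/\beta$ rather than $1$) yields, by Chebyshev,
\begin{align*}
\frac{1}{\#\mathcal{F}_3(Q)}\#\{\chi\in\mathcal{F}_3(Q):|L(1,\chi\overline{\psi})|>V\}\le \exp\big(-\exp(V^{1/\beta}(\log_2 V)^{O(1)})\big).
\end{align*}
The constraint $V\le(\log_2 Q)^{\beta}(\log_4 Q)^{-C_5}$ is precisely what makes the optimal $k$ admissible in the large sieve, and the role of the cubic Euler factor $\prod_p(1-1/p)^{-1}\cdot(\text{cubic averaging}) $ producing the exponent $1/\beta=2\pi/(3\sqrt3)$ must be tracked carefully: this is the place where the value $\beta$ from \eqref{Def:beta} enters, via the fact that $\max_{|z|=1,\ z^3=1}\re z$-type averages over the three cube roots of unity give a Mertens-type constant $\beta$ smaller than $1$.

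For the lower bound one works directly: fix the set $\mathcal{P}$ of primes $p\le z$ (with $z=(\log V)^{O(1)}$, so $\sum_{p\in\mathcal{P}}1/p\approx\log_2 z$ should be $\approx V^{1/\beta}(\log_2 V)^{O(1)}$) on which one wants $\chi(p)$ to be the cube root of unity closest in argument to $\overline{\psi(p)}^{-1}$, making $\re(\chi(p)\overline{\psi(p)})\ge\cos(\pi/3)+\dotsb$ large; by a sieve/character-counting argument (of the type in \cite{DDLL23}, Granville--Soundararajan \cite{GrSo03}, Dahl--Lamzouri \cite{DaLa18}) the number of cubic $\chi\in\mathcal{F}_3(Q)$ prescribed on $\mathcal{P}$ in this way is $\gg \#\mathcal{F}_3(Q)\cdot 3^{-\#\mathcal{P}}$ as long as $\prod_{p\in\mathcal{P}}p\le Q^{c}$, again forcing $\#\mathcal{P}\ll\log Q/\log_2 Q$ and hence the stated range of $V$; for all such $\chi$ one gets $\log|L(1,\chi\overline{\psi})|\ge\log V$ after checking the tail is negligible for most of them via Theorem \ref{Thm:TailPolya}-type input (or directly a second-moment estimate). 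Combining, $3^{-\#\mathcal{P}}=\exp(-\exp(V^{1/\beta}(\log_2 V)^{O(1)}))$, matching the upper bound.

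\textbf{Main obstacle.} The hard part will be the off-diagonal estimate in the moment computation: unlike the prime-modulus or quadratic cases there is no clean exact orthogonality for cubic Dirichlet characters of varying modulus $q\le Q$, so I would need the simple cubic large sieve inequality derived from \cite{DDLL23} to be strong enough to absorb all off-diagonal terms up to $k\asymp\log Q/\log_2 Q$, and to simultaneously handle the twist by the large-order character $\overline{\psi}$ without the twist degrading the savings; tracking how $\psi$ of order $m-1$ with $m\asymp\sqrt{\log V}$ interacts with the main term (to ensure it contributes the full $(\log_2 y_0)^k$ and not a smaller power, which is what would change the final exponent and is exactly the phenomenon responsible for the $(\log V)^{1/(4\beta)}$ factors in Theorems \ref{Thm:Main} and \ref{Thm:Structure}) is the delicate bookkeeping I expect to consume most of the work.
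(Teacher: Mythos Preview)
Your overall plan is in the right spirit, but the upper-bound argument has a genuine gap. You propose to bound the tail of $\log|L(1,\chi\overline\psi)|$ via the polynomial moments
\[
\frac{1}{\#\mathcal F_3(Q)}\sum_{\chi\in\mathcal F_3(Q)}\Big(\sum_{p\le y_0}\frac{\re(\chi(p)\overline{\psi(p)})}{p}\Big)^{2k},
\]
and you assert the diagonal is $\asymp k!(\log_2 y_0)^k$. That estimate is incorrect. Since the summands have mean essentially zero (by cubic orthogonality, or $\ex[\Y(p)]=0$ in the random model), after expanding every prime must appear at least twice, and the dominant contribution comes from paired primes, giving $\asymp k!\big(\sum_{p\le y_0}c/p^2\big)^k=O(C^k k!)$ with $C$ bounded, independent of $y_0$. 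Such moments yield only a sub-Gaussian tail $\exp(-c(\log V)^2)$ for $\log|L|$, nowhere near the required doubly exponential decay; in particular the exponent $1/\beta$ cannot emerge from this computation.

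The paper instead computes the \emph{complex} moments $S(r)=\frac{1}{\#\mathcal F_3(Q)}\sum_{\chi}|L(1,\chi\overline\psi)|^{2r}$ directly (equivalently, exponential rather than polynomial moments of $\log|L|$). By an easy adaptation of \cite{DDLL23} one has $S(r)=\ex\big(|L(1,\Y\overline\psi)|^{2r}\big)+o(1)$ uniformly for $r\ll\log Q/(\log_2 Q\log_3 Q)$, and a short Euler-product calculation gives $\ex\big(|L(1,\Y\overline\psi)|^{2r}\big)=\exp\big(2r\beta\log_2 r+O(r\log_2 r/k^2+r\log_2 m)\big)$: here $\beta$ appears because for small primes the factor $\ex\big(|1-\Y(p)\overline{\psi(p)}/p|^{-2r}\big)$ is dominated by the event that $\Y(p)$ equals the cube root of unity maximizing $\re(\Y(p)\overline{\psi(p)})$, and averaging these maxima over the residue classes of $\psi$ produces $\beta$ via \cite[Proposition~6.1]{LaMa22}. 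Chebyshev with $r=\exp(V^{1/\beta}(\log_2 V)^{O(1)})$ then gives the upper bound, and the hypotheses $m\asymp\sqrt{\log V}$, $k=m-1$ are exactly what balance the two error terms. For the lower bound the paper does not construct characters at all: it applies the Paley--Zygmund inequality to $S(r)$ and $S(2r)$, both already estimated, which is a one-line argument. Your constructive lower bound could be made to work but is unnecessary here.
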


We will prove Theorem \ref{Thm: L(1, chipsi)} in Section \ref{Sec: lower}. We quickly demonstrate how to obtain the lower bound in Theorem \ref{Thm:Main} using the above result.

\begin{proof}[Proof of the lower bound in Theorem \ref{Thm:Main} assuming Theorem \ref{Thm: L(1, chipsi)}]
Let $m$ be a non-exceptional modulus which is prime and verifies $m\in [\sqrt{\log V}, 2\sqrt{\log V}]$, and $\psi$ be a primitive Dirichlet character modulo $m$ of order $m-1$. First by  \eqref{Eq:Mchi and L1}, we have
\begin{equation}\label{Eq:LowerBoundML1}
\dfrac{1}{\#\F}\#\{\chi\in \F\colon M(\chi)>V\} \geq 
\dfrac{1}{\#\F}\#\{\chi\in \F\colon |L(1, \chi\overline{\psi})|>V_1\},
\end{equation}
    where $V_1=C_6V\varphi(m)/\sqrt{m}$ for some positive constant $C_6$. We now apply Theorem \ref{Thm: L(1, chipsi)} with $V$ replaced by $V_1=V(\log V)^{1/4} (\log_3V)^{O(1)}$ so that $$V_1\leq (\log_2 Q)^\beta (\log_4 Q)^{-C_7}$$ for some suitably large constant ${C_7}$. This gives 
$$
     \dfrac{1}{\#\F}\#\{\chi\in \F\colon |L(1, \chi\overline{\psi})|>V_1\}\geq \exp\left(-\exp\left(V^{1/\beta}(\log V)^{1/(4\beta)}(\log_2 V)^{O(1)}\right)\right).
 $$
Combining this estimate with \eqref{Eq:LowerBoundML1} completes the proof.
\end{proof}

\subsection{Organization of the paper} The remaining sections of the paper are organized as follows. We use Section \ref{Sec:Prelim} to gather several preliminary results on sums of divisor functions, moments of random multiplicative functions, and large sieve inequalities. We will establish Theorem \ref{Thm: smooth} in Section \ref{Sec: smooth part}. Section \ref{Sec: tail} will be devoted to the proof of Theorem \ref{Thm:TailPolya}. In Section \ref{Sec: lower}, we will prove Theorem \ref{Thm: L(1, chipsi)}. Finally, we will prove our structure result, Theorem \ref{Thm:Structure} in Section \ref{Sec: structure} by using ingredients from Sections \ref{Sec: smooth part} and \ref{Sec: tail}.


\section{Preliminary Results}\label{Sec:Prelim}

\subsection{Estimates on divisor sums}
In this section, we establish bounds on certain divisor sums. To this end, we shall express these sums as moments of certain weighted sums of a random completely multiplicative function of order $3$. Let  $\{\X(p)\}_{p \text{ prime}}$ be I. I. D. random variables taking the values $1,  \omega_3,  \omega_3^2$ with equal probability $1/3$. We extend $\X(n)$ multiplicatively to all positive integers by setting $\X(1)=1$ and $\X(n)= \prod_{p^{\ell} || n} \X(p)^{\ell}$.  Let $s >1/2$ be a real number. We consider the following random Dirichlet series 
$$ \sum_{n=1 }^{
\infty}\frac{\X(n)}{n^s}=\prod_{p\: \textup{prime}}\left(1-\frac{\X(p)}{p^s}\right)^{-1},$$
where both the series and the Euler product are almost surely convergent by Kolmogorov's two-series Theorem since $\ex(\X(p))=0$ for all primes $p$. Moreover we note that
\begin{equation}\label{Eq:OrthogonalityRandom}
\ex\left(\X(n)\overline{\X(m)}\right)=\ex\left(\X(nm^2)\right)=\begin{cases}1 & \textup{ if } nm^2 \textup{ is a cube,}\\
0 & \textup{ otherwise.}
\end{cases}
\end{equation} Hence, we obtain
$$ \ex\left(\left|\sum_{n=1}^{
\infty}\frac{\X(n)}{n^s}\right|^{2k}\right)= \ex\left(\sum_{n=1 }^{
\infty}\frac{\X(n)d_k(n)}{n^s}\sum_{m=1 }^{
\infty}\frac{\overline{\X(m)}d_k(m)}{m^s}\right)= \sum_{\substack{ n, m \geq 1\\ nm^2 \textup{ is a cube}}} \frac{d_k(n)d_k(m)}{(nm)^{s}}, $$
{}where $d_k(n)=\sum_{n_1\cdots n_k=n}1$ is the $k$-th divisor function. Therefore, we deduce that 
\begin{equation}\label{Eq:IdentityDivisorRandom}
\sum_{\substack{ n, m \geq 1\\ nm^2 \textup{ is a cube}}} \frac{d_k(n)d_k(m)}{(nm)^{s}}= \prod_{p}\ex\left(\left|1-\frac{\X(p)}{p^s}\right|^{-2k}\right)= \prod_{p}\ex\left(\left(1-\frac{2\re\X(p)}{p^s}+ \frac{1}{p^{2s}}\right)^{-k}\right),  
\end{equation}
by the independence of the $\X(p)$'s.
\begin{lem}\label{Lem:DivisorSums} Let $k$ be a large positive integer. Then for any $0\leq \eta \leq 1/\log k$ we have
$$
\sum_{\substack{ n, m \geq 1\\ nm^2 \textup{ is a cube}}} \frac{d_k(n)d_k(m)}{(nm)^{1-\eta}}=\exp\big(O(k\log\log k)\big).
$$
\end{lem}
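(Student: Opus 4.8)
The plan is to estimate the Euler product on the right-hand side of \eqref{Eq:IdentityDivisorRandom}, with $s=1-\eta$, by splitting it over primes according to their size relative to $k$. Recall that
\[
\sum_{\substack{ n, m \geq 1\\ nm^2 \textup{ is a cube}}} \frac{d_k(n)d_k(m)}{(nm)^{1-\eta}}= \prod_{p}\ex\left(\left(1-\frac{2\re\X(p)}{p^{1-\eta}}+ \frac{1}{p^{2-2\eta}}\right)^{-k}\right),
\]
so it suffices to bound $\log$ of the product, i.e. $\sum_p \log \ex\left(\left(1-\tfrac{2\re\X(p)}{p^{1-\eta}}+ \tfrac{1}{p^{2-2\eta}}\right)^{-k}\right)$. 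Since $\X(p)$ takes the three values $1,\omega_3,\omega_3^2$ with probability $1/3$ each, we have $\re\X(p)\in\{1,-1/2\}$ with $\re\X(p)=1$ occurring with probability $1/3$ and $\re\X(p)=-1/2$ with probability $2/3$; hence the expectation is an explicit average of two terms. Also note $0\le \eta\le 1/\log k$ means $p^{\eta}\ll 1$ uniformly for $p$ in any fixed range, while for very large $p$ it only helps; I'll use $p^{-(1-\eta)}\le e\, p^{-1}$ throughout, which is harmless.

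For the \textbf{large primes} $p>k^2$ (say), the quantity $2\re\X(p)/p^{1-\eta}$ is $\ll k^{-2+o(1)}$, so $\left(1-\tfrac{2\re\X(p)}{p^{1-\eta}}+\tfrac{1}{p^{2-2\eta}}\right)^{-k}=1+O(k\,\re\X(p)/p^{1-\eta})+O(k/p^{2-2\eta})$ uniformly, and taking expectations the linear term dies because $\ex(\re\X(p))=0$; what survives is $1+O(k/p^{2-2\eta})$, so $\log$ of it is $O(k/p^{2-2\eta})$, and $\sum_{p>k^2} k/p^{2-2\eta}\ll k\sum_{p>k^2}p^{-3/2}\ll k\cdot k^{-1}=O(1)$. (Here I use $\eta\le 1/\log k$ to guarantee $2-2\eta\ge 3/2$ for $k$ large.) For the \textbf{small primes} $p\le k^2$, I bound crudely: on the event $\re\X(p)=1$ (probability $1/3$) the base is $(1-p^{-(1-\eta)})^2\ge (1-e/p)^2$, and on the complementary event (probability $2/3$) the base is $\ge 1$. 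So $\ex(\cdots)\le \tfrac13(1-e/p)^{-2k}+\tfrac23\le (1-e/p)^{-2k}+1$, and thus $\log\ex(\cdots)\le 2k\log\tfrac{1}{1-e/p}+\log 2\ll k/p$ for $p>2e$, while for the finitely many primes $p\le 2e$ we just get an absolute bound $O(k)$ each, contributing $O(k)$ total. Summing, $\sum_{2e<p\le k^2}k/p\ll k\log\log(k^2)\ll k\log\log k$. Combining the two ranges gives $\log$ of the full product $=O(k\log\log k)$, hence the claimed bound.

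I should double-check the one genuinely delicate point: the transition range, where $p$ is comparable to $k$ and the factor $(1-e/p)^{-2k}$ is of size roughly $e^{2ek/p}$, which is huge for $p\ll k$ — but this is exactly why the bound is $e^{O(k\log\log k)}$ rather than $e^{O(k)}$: the sum $\sum_{p\le k}1/p$ is $\log\log k$, and multiplying by $k$ gives the stated exponent. So there is no loss in the crude "ignore the negative-real-part branch" step; the main term is genuinely of this order (up to the constant), which is consistent with the lemma only claiming an upper bound of this shape.

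The \emph{main obstacle} I anticipate is purely bookkeeping: making the split at $k^2$ (versus $k$, or $k\log k$) clean, and verifying that the large-prime tail really does telescope to $O(1)$ using only $\eta\le 1/\log k$ — in particular one must be a little careful that the expansion $(1-u)^{-k}=1+O(ku)$ is valid, which requires $k|u|$ bounded, i.e. $k/p^{1-\eta}\ll 1$, forcing the threshold to be at least $\gg k^{1+o(1)}$; taking $k^2$ is safely past this. No estimate here is deep; everything reduces to Mertens-type sums and the elementary inequality $\log(1-x)^{-1}\ll x$ for $x$ bounded away from $1$, together with the key cancellation $\ex(\re\X(p))=0$ that kills the would-be main term for large $p$.
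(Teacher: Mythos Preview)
Your approach is essentially the same as the paper's: express the sum as the Euler product \eqref{Eq:IdentityDivisorRandom}, split the primes into small and large, bound crudely for small primes, and for large primes Taylor-expand and use $\ex(\re\X(p))=0$ to kill the linear term. The paper splits at $10k$ rather than $k^2$, and for small primes simply bounds the expectation by the worst case $(1-p^{-(1-\eta)})^{-2k}=\exp(O(k/p))$; otherwise the arguments match.

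There is, however, a genuine slip in your small-prime estimate. From $\ex(\cdots)\le \tfrac13(1-e/p)^{-2k}+\tfrac23$ you pass to $(1-e/p)^{-2k}+1$ and then to $\log\ex(\cdots)\le \log 2 + 2k\log\tfrac{1}{1-e/p}$, claiming this is $\ll k/p$. But the $\log 2$ is \emph{not} $\ll k/p$ once $p\gg k$, and summing that constant over the $\pi(k^2)\asymp k^2/\log k$ primes up to $k^2$ gives a contribution of size $k^2/\log k$, which swamps the target $k\log\log k$. The fix is immediate: since $A:=(1-e/p)^{-2k}\ge 1$, one has $\tfrac13 A+\tfrac23\le A$, so the $+1$ (and hence the $\log 2$) is unnecessary and $\log\ex(\cdots)\le 2k\log\tfrac{1}{1-e/p}\ll k/p$ directly. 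Alternatively, split at a threshold $\asymp k$ as the paper does, so only $O(k/\log k)$ primes lie in the small range. A second minor slip: in the large-prime expansion the quadratic remainder should be $O(k^2/p^{2-2\eta})$, not $O(k/p^{2-2\eta})$; with your threshold $p>k^2$ this still sums to $O(k/\log k)=O(k)$, so the conclusion is unaffected.
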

\begin{proof}  By \eqref{Eq:IdentityDivisorRandom} we have 
$$\sum_{\substack{ n, m \geq 1\\ nm^2 \textup{ is a cube}}} \frac{d_k(n)d_k(m)}{(nm)^{1-\eta}}= \prod_{p}\ex\left(\left|1-\frac{\X(p)}{p^{1-\eta}}\right|^{-2k}\right). $$
If $p\leq 10k$ we  use the trivial inequality
$$
\ex\left(\left|1-\frac{\X(p)}{p^{1-\eta}}\right|^{-2k}\right) \leq \left(1-\frac{1}{p^{1-\eta}}\right)^{-2k} = \exp\left(O\left(\frac{k}{p}\right)\right), 
$$
while if $p> 10k
$ we use the estimate 
\begin{equation}\label{Eq:RandomLargeprimes}
\begin{aligned}
\ex\left(\left|1-\frac{\X(p)}{p^{1-\eta}}\right|^{-2k}\right)
&=\ex\left(\left(1-\frac{2\re\X(p)}{p^{1-\eta}}+ \frac{1}{p^{2-2\eta}}\right)^{-k}\right)\\
&=\ex\left(1-\frac{2k\re\X(p)}{p^{1-\eta}}+O\left(\frac{k^2}{p^{2-2\eta}}\right)\right)\\
& = 1+O\left(\frac{k^2}{p^{2-2\eta}}\right) =\exp\bigg(O\bigg(\dfrac{k^2}{p^{2-2\eta}}\bigg)\bigg),
\end{aligned}
\end{equation}where we have used the fact that $\ex (\X(p))=0$. Combining these estimates and using the Prime Number Theorem, we derive 
$$
\sum_{\substack{ n, m \geq 1\\ nm^2 \textup{ is a cube}}} \frac{d_k(n)d_k(m)}{(nm)^{1-\eta}}=\exp\left(O\left(k\sum_{p\leq 10 k}\frac{1}{p}+ k^2\sum_{p>10k}\frac{1}{p^{2-2\eta}}\right)\right)=\exp\big(O(k\log\log k)\big),
$$
as desired. 
\end{proof}
\begin{lem}\label{Lem:DivisorRoughBig1}
There exists a positive constant $C_8$ such that for any integer $k\ge 2$ and real number $y>10k$  we have 
 $$
\mathcal{D}_y(k):=\sum_{\substack{ n, m>1\\P^{-}(nm)>y\\ nm^2 \textup{ is a cube}}} \frac{d_k(n)d_k(m)}{nm}\ll \left(\frac{C_8k}{y\log y}\right)^{k}.
$$
\end{lem}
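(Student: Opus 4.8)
The plan is to recognize $\mathcal{D}_y(k)$ as (a piece of) the $2k$-th moment of the random Euler product over primes exceeding $y$, exactly as in the derivation of \eqref{Eq:IdentityDivisorRandom}. Running that computation with every Dirichlet series restricted to integers free of prime factors $\le y$, the orthogonality relation \eqref{Eq:OrthogonalityRandom} gives
\[\sum_{\substack{n,m\ge 1\\ P^{-}(nm)>y\\ nm^2\textup{ a cube}}}\frac{d_k(n)d_k(m)}{nm}=\prod_{p>y}\ex\!\left(\Big|1-\frac{\X(p)}{p}\Big|^{-2k}\right),\]
so that $\mathcal{D}_y(k)$, which omits the term $n=m=1$, is bounded above by this product minus $1$.

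The core of the argument is the estimate of the local factor when $p>y>10k$. Since $\X(p)=1$ with probability $1/3$ and equals $\omega_3$ or $\omega_3^2$ (each with probability $1/3$), and $|1-\omega_3/p|^2=|1-\omega_3^2/p|^2=1+1/p+1/p^2$, we have
\[\ex\!\left(\Big|1-\frac{\X(p)}{p}\Big|^{-2k}\right)=\frac13\Big(1-\frac1p\Big)^{-2k}+\frac23\Big(1+\frac1p+\frac1{p^2}\Big)^{-k}.\]
Because $p>10k$ forces $k/p<1/10$, each bracket expands as a rapidly converging series; the terms linear in $1/p$ combine to $\tfrac13\cdot\tfrac{2k}{p}-\tfrac23\cdot\tfrac{k}{p}=0$ — this is precisely the reflection of $\ex(\re\X(p))=0$ which already powered \eqref{Eq:RandomLargeprimes} — leaving $\ex(|1-\X(p)/p|^{-2k})=1+O(k^2/p^2)$ with an absolute implied constant. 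Multiplying over $p>y$ and invoking the prime number theorem (via partial summation, $\sum_{p>y}p^{-2}\ll (y\log y)^{-1}$), this gives
\[\mathcal{D}_y(k)\le \prod_{p>y}\Big(1+O\big(k^2/p^2\big)\Big)-1=\exp\!\big(O\big(k^2/(y\log y)\big)\big)-1.\]

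To extract the stated bound I would expand the product $\prod_{p>y}(1+\epsilon_p)-1$ into elementary symmetric functions of the local corrections $\epsilon_p=O(k^2/p^2)$, bounding the contribution of the terms that involve exactly $j$ primes by $\frac1{j!}\big(\sum_{p>y}\epsilon_p\big)^j\ll \frac1{j!}\big(Ck^2/(y\log y)\big)^j$, and then summing over $j$ and applying Stirling's formula (using $\frac1{k!}(Ck^2)^k\asymp(eCk)^k$) to land on $\ll (C_8k/(y\log y))^k$. The step I expect to be the main obstacle is keeping the local estimate, and hence the entire argument, uniform as $k$ grows with $y$ — the expansion of the local factor genuinely requires $p>10k$, which is exactly why the hypothesis $y>10k$ is imposed — together with the combinatorial bookkeeping in this final step that converts the crude exponential bound into the sharp $k$-th power form.
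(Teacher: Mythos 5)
Your reduction to the Euler product reproduces the paper's \emph{preliminary} step, but your final step does not work, and in fact no argument along these lines can produce the $k$-th power. In the expansion $\prod_{p>y}(1+\epsilon_p)-1=\sum_{j\geq 1}e_j(\{\epsilon_p\})$ with $\epsilon_p=O(k^2/p^2)$, the $j=1$ term is already of size $k^2/(y\log y)$, whereas the target bound carries a factor $(y\log y)^{-k}$; only the terms with $j\geq k$ are small enough, and your appeal to Stirling addresses solely the $j=k$ term. Concretely, the diagonal terms $n=m=p$ with $p>y$ (for which $nm^2=p^3$ is a cube) contribute $d_k(p)^2\sum_{p>y}p^{-2}\asymp k^2/(y\log y)$, which for fixed $k\geq 2$ and large $y$ dwarfs $(C_8k/(y\log y))^k$. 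This also shows that the estimate cannot hold verbatim with $d_k$: the quantity the paper actually needs (and its proof actually bounds) is $\ex\big(\big|\sum_{n>1,\,P^{-}(n)>y}\X(n)/n\big|^{2k}\big)$, in which each $n$ is a product of $k$ integers exceeding $1$ and hence has at least $k$ prime factors above $y$, so these low-order diagonal terms are absent.

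The paper's proof diverges from yours precisely at this point. The crude Euler-product computation gives only $\exp(O(k^2/(y\log y)))$ (their \eqref{Eq:DivisorSumRough}), and the $k$-th power is then extracted probabilistically. Setting $\mathbb{W}=\sum_{p>y}\X(p)/p$, they split the $2k$-th moment on the event $|\mathbb{W}|\leq 1$: on the complement they combine the crude bound with a large-deviation estimate obtained by applying Markov's inequality to $\ex(|\mathbb{W}|^{2\ell})$ for $\ell\asymp y\log y$, and on the main event they write $\sum_{n>1}\X(n)/n=e^{\mathbb{W}}-1+O(1/(y\log y))=O(|\mathbb{W}|)+O(1/(y\log y))$ and invoke the moment bound $\ex(|\mathbb{W}|^{2k})\ll (ck/(y\log y))^{k}$ from \cite[Lemma~6.5]{DDLL23}. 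That bound — the cube condition forces each of the $2k$ primes to coincide with at least one other, leaving at most $k$ independent factors of $\sum_{p>y}p^{-2}\asymp (y\log y)^{-1}$ — is the true source of the exponent $k$, and it has no analogue in your elementary-symmetric-function expansion.
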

\begin{proof}
We start by proving a weaker bound for $\mathcal{D}_y(k)$ which will be useful in the proof. By \eqref{Eq:OrthogonalityRandom} and \eqref{Eq:RandomLargeprimes} we observe that
\begin{equation}\label{Eq:DivisorSumRough}\begin{aligned}
\mathcal{D}_y(k)&\leq \sum_{\substack{ P^{-}(nm)>y\\ nm^2 \textup{ is a cube}}} \frac{d_k(n)d_k(m)}{nm}  = \ex\left(\left|\sum_{P^{-}(n)>y}\frac{d_k(n)\X(n)}{n}\right|^2\right)\\&= \prod_{p>y} \ex\left(\left|1-\frac{\X(p)}{p}\right|^{-2k}\right)= \exp\left(O\left(k^2\sum_{p>y} \frac{1}{p^2}\right)\right)= \exp\left(O\left(\frac{k^2}{y\log y}\right)\right)
\end{aligned}
\end{equation}
by the Prime Number Theorem. Now using  \eqref{Eq:OrthogonalityRandom} we get 
\begin{equation}\label{Eq:SplitM}
\begin{aligned}\mathcal{D}_{y}(k)&= \ex\Bigg(\Bigg|\sum_{\substack{n>1\\P^{-}(n)>y}}\frac{\X(n)}{n}\Bigg|^{2k}\Bigg)\\
&=\ex\Bigg(\Bigg|\sum_{\substack{n>1\\ P^{-}(n)>y}}  \frac{\X(n)}{n}\Bigg|^{2k} \cdot \mathbf{1}_{\mathcal{A}}\Bigg)+\ex\Bigg(\Bigg|\sum_{\substack{n>1\\ P^{-}(n)>y}}  \frac{\X(n)}{n}\Bigg|^{2k}\cdot \mathbf{1}_{\mathcal{A}^c}\Bigg), 
\end{aligned}
\end{equation}
where $\mathbf{1}_{\mathcal{B}}$ is the indicator function of the event $\mathcal{B}$, and $\mathcal{A}$ is the event $$\mathcal{A} :=\big\{ |\mathbb{W}| \leq 1\big\}, \text{ where } \mathbb{W}:= \sum_{p>y} \frac{\X(p)}{p}. $$
Let $\ell$ be  a positive integer to be chosen
. By Markov's inequality and \eqref{Eq:OrthogonalityRandom}, we have 
$$
\pr(\mathcal{A}^c)\leq \ex(|\mathbb{W}|^{2\ell})=\sum_{\substack{p_1, \dots, p_{2\ell}>y \\ p_1\cdots p_{\ell}p_{\ell+1}^2\cdots p_{2\ell}^2 \textup{ is a cube}}}\frac{1}{p_1\cdots p_{2\ell}}.
$$
Furthermore, it follows from the proof of Lemma 6.5 of \cite{DDLL23} that 
\begin{equation}\label{Eq:MomentsRandomprimes}
\begin{aligned}
 \sum_{\substack{p_1, \dots, p_{2\ell}>y \\ p_1\cdots p_{\ell}p_{\ell+1}^2 \cdots p_{2\ell}^2 \textup{ is a cube}}}\frac{1}{p_1\cdots p_{2\ell}} \leq 2^{\ell}\ell!\left(\sum_{p>y} \frac{1}{p^2}\right)^{2\ell}\ll \left(\frac{c\ell}{y\log y}\right)^{\ell},
\end{aligned}
\end{equation}
for some positive constant $c$. 
Choosing $\ell=\lfloor y(\log y)/3c\rfloor$
we derive
\begin{equation}\label{Eq:BoundProbComplement}
\pr(\mathcal{A}^c) \ll \exp\left(-\frac{y\log y}{3c}\right).
\end{equation}
Therefore, using the Cauchy-Schwarz inequality and the bounds \eqref{Eq:DivisorSumRough} and \eqref{Eq:BoundProbComplement}, we deduce that the contribution of the second part in \eqref{Eq:SplitM} is 
\begin{equation}\label{Eq:ContrSecondPart}\ex\Bigg(\Bigg|\sum_{\substack{n>1\\ P^{-}(n)>y}} \frac{\X(n)}{n}\Bigg|^{2k}\cdot \mathbf{1}_{\mathcal{A}^c}\Bigg) \leq  \mathcal{D}_{y}(2k)^{1/2} \cdot \pr(\mathcal{A}^c)^{1/2} \ll \exp\left(-\frac{y\log y}{6c}+ O\left(\frac{k^2}{y\log y}\right)\right).
\end{equation}
We now estimate the contribution of the first part in \eqref{Eq:SplitM}. Note that on the event $\mathcal{A}$ we have  $|e^{\mathbb{W}}-1|\leq e |\mathbb{W}|$ and 
$$ 
\sum_{\substack{n>1\\ P^{-}(n)>y}}  \frac{\X(n)}{n} = -1+\prod_{p>y}\left(1-\frac{\X(p)}{p}\right)^{-1}= -1+ e^{\mathbb{W} +O(1/y\log y)}= e^{\mathbb{W}}-1+ O\left(\frac{1}{y\log y}\right).
$$
Therefore, using Minkowski's inequality and \eqref{Eq:MomentsRandomprimes} we derive 
\begin{align*}\ex\Bigg(\Bigg|\sum_{\substack{n>1\\ P^{-}(n)>y}}  \frac{\X(n)}{n}\Bigg|^{2k} \cdot \mathbf{1}_{\mathcal{A}}\Bigg)^{1/2k}
&\le e \ex(|\mathbb{W}|^{2k})^{1/2k}+ O\left(\frac{1}{y\log y}\right)\ll \sqrt{\frac{k}{y\log y}}.
\end{align*}
Combining the above estimate with \eqref{Eq:SplitM} and \eqref{Eq:ContrSecondPart} completes the proof.
\end{proof}

\subsection{The large sieve}  We will need the following two large sieve type inequalities. The first is the classical large sieve for the family of all primitive characters $\chi \pmod q$ with $q\leq Q$ (see for example \cite{Ga67}). 
The second is a consequence of cubic reciprocity and the P\'olya-Vinogradov inequality for Hecke characters over $\mathbb{Q}(\omega_3)$.

\begin{lem}\label{Lem:LargeSieve}
Let $\{a_m\}_{m\geq 1}$ be an arbitrary sequence of complex numbers. Then we have 
$$\sum_{q\leq Q} \ \sums_{\chi\bmod q} \left| \sum_{m\leq M} a_m \chi(m)\right|^2 \ll  (Q^2+M) \sum_{m\leq M} |a_m|^2,$$
where $\sums$ indicates that the summation is over primitive characters. 
\end{lem}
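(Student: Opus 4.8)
The plan is to deduce this from the classical additive large sieve inequality at Farey fractions, which is Gallagher's theorem \cite{Ga67}; the character version then follows by a standard Gauss-sum reduction.

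First I would open up each primitive character using its Gauss sum: if $\chi\pmod q$ is primitive, then $\chi(m)=\tau(\overline\chi)^{-1}\sum_{a\bmod q}\overline\chi(a)e(am/q)$ with $|\tau(\overline\chi)|=\sqrt q$ (see \cite[Theorem 9.7]{MV07}). Setting $S(\alpha):=\sum_{m\leq M}a_m e(m\alpha)$, this gives $\big|\sum_{m\leq M}a_m\chi(m)\big|^2=\tfrac1q\big|\sum_{a\bmod q}\overline\chi(a)S(a/q)\big|^2$ for primitive $\chi$. Summing over primitive $\chi\pmod q$, enlarging to all characters mod $q$ (the summands being nonnegative), and invoking the orthogonality relations, I obtain
\[\sums_{\chi\bmod q}\Big|\sum_{m\leq M}a_m\chi(m)\Big|^2\ \leq\ \frac1q\sum_{\chi\bmod q}\Big|\sum_{a\bmod q}\overline\chi(a)S(a/q)\Big|^2\ =\ \frac{\varphi(q)}{q}\sum_{\substack{a\bmod q\\(a,q)=1}}\big|S(a/q)\big|^2\ \leq\ \sum_{\substack{a\bmod q\\(a,q)=1}}\big|S(a/q)\big|^2.\]
Summing over $q\leq Q$, the left side of the lemma is then bounded by $\sum_{q\leq Q}\sum_{(a,q)=1}|S(a/q)|^2$, i.e.\ a sum of $|S|^2$ over the Farey fractions of order $Q$.

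It remains to apply the additive large sieve. The Farey points $a/q$ with $q\le Q$, $(a,q)=1$, are $Q^{-2}$-spaced modulo $1$, since distinct such fractions differ by at least $1/(qq')\geq Q^{-2}$. Gallagher's inequality --- which one proves from the Sobolev-type bound $|S(x)|^2\leq \delta^{-1}\int_{x-\delta/2}^{x+\delta/2}|S(t)|^2\,dt+\int_{x-\delta/2}^{x+\delta/2}|S(t)|\,|S'(t)|\,dt$, summed over points whose neighbourhoods $(x_r-\delta/2,x_r+\delta/2)$ are pairwise disjoint, followed by Cauchy--Schwarz and Parseval (after shifting the frequencies of $S$ to lie in $[-M/2,M/2]$, so that $\int_0^1|S'|^2\ll M^2\sum_{m\le M}|a_m|^2$) --- yields $\sum_r|S(x_r)|^2\ll(\delta^{-1}+M)\sum_{m\leq M}|a_m|^2$. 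Taking $\delta=Q^{-2}$ gives the claimed bound $\ll(Q^2+M)\sum_{m\leq M}|a_m|^2$.

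The genuine content lies entirely in the additive large sieve, which may simply be quoted from \cite{Ga67} (or, in sharp form, from Montgomery--Vaughan or Selberg); everything else is routine manipulation of Gauss sums and the Farey dissection, so I do not expect a real obstacle here.
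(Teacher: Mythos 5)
Your argument is correct and is exactly the standard proof of the multiplicative large sieve: Gauss-sum expansion of primitive characters, orthogonality to pass to Farey fractions, and then the additive large sieve at $Q^{-2}$-spaced points. The paper does not prove this lemma at all but simply quotes it from Gallagher \cite{Ga67}, whose proof is the one you reconstruct, so there is nothing to fault here.
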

\begin{lem}\label{Lem:Orthogonality}
Let $\{a_m\}_{m\geq 1}$ be an arbitrary sequence of complex numbers. Then we have 
\begin{align*}
&\sum_{\chi \in  \F} \left| \sum_{m\leq M} a_m \chi(m)\right|^2 \\
& \quad \quad \quad \ll  Q \sum_{\substack{m_1, m_2\leq M\\ m_1m_2^2 \textup{ is a cube}}} |a_{m_1}a_{m_2}| + O\left(\sqrt{Q}(\log Q)M(\log M)^{3/2}\left(\sum_{m\leq M}|a_m|\right)^2\right) .
\end{align*}
\end{lem}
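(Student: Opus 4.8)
The plan is to expand the square, pull out the diagonal-type contribution using the orthogonality/character-sum structure of the cubic family, and bound the off-diagonal by invoking a Pólya--Vinogradov-type bound for Hecke characters over $\mathbb{Q}(\omega_3)$ obtained via cubic reciprocity. Concretely, write $\sum_{\chi\in\F}|\sum_{m\le M}a_m\chi(m)|^2 = \sum_{m_1,m_2\le M}a_{m_1}\overline{a_{m_2}}\sum_{\chi\in\F}\chi(m_1)\overline{\chi(m_2)}$. The key point is to understand $S(m_1,m_2):=\sum_{\chi\in\F}\chi(m_1)\overline{\chi(m_2)}$. Since a primitive cubic character $\chi\bmod q$ with $(q,3)=1$ corresponds (via cubic reciprocity) to a character of the form $n\mapsto\big(\tfrac{n}{\mathfrak{q}}\big)_3$ for a squarefree Eisenstein integer $\mathfrak{q}$ (up to units and the splitting of rational primes), and $\chi(m_1)\overline{\chi(m_2)}=\chi(m_1m_2^2)$ because $\overline{\chi}=\chi^2$ for a cubic character, the inner sum becomes a sum of cubic residue symbols $\big(\tfrac{m_1m_2^2}{\mathfrak{q}}\big)_3$ over the relevant $\mathfrak{q}$. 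When $m_1m_2^2$ is a perfect cube this symbol is identically $1$ (on the support), contributing the main term $\ll Q\sum_{m_1m_2^2\ \mathrm{cube}}|a_{m_1}a_{m_2}|$ after counting that $\#\F\ll Q$; otherwise $m_1m_2^2$ is (a cube times) a non-cube, and we are left to bound a character sum in the $\mathfrak{q}$-aspect.

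The second, main step is to bound this non-cube contribution. Here I would quote the relevant estimate from \cite{DDLL23} (whose proof of their cubic large sieve input rests exactly on cubic reciprocity to swap the roles of $m$ and $\mathfrak{q}$, turning $\big(\tfrac{m}{\mathfrak{q}}\big)_3$ into a fixed Hecke character of $\mathbb{Q}(\omega_3)$ evaluated at $\mathfrak{q}$, and then applying the Pólya--Vinogradov inequality for that Hecke character). For a fixed non-cube $n=m_1m_2^2$ of size $\ll M^3$, summing the associated cubic Hecke character over Eisenstein integers of norm $\le Q$ yields a bound of the shape $\ll \sqrt{Q}\,\sqrt{\mathrm{cond}}\,\log(\ldots)$, where the conductor is controlled by the radical of $n$, hence $\ll \sqrt{M^3}$ up to divisor-function losses; the divisor function $d(n)\ll (\log M)^{O(1)}$ accounts for the $(\log M)^{3/2}$ factor, and there is one extra $\log Q$ from the Hecke Pólya--Vinogradov and the passage between ideals and integers. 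Summing $|a_{m_1}a_{m_2}|$ trivially over all $m_1,m_2\le M$ and pulling out the worst-case character-sum bound gives the stated error term $O\big(\sqrt{Q}(\log Q)M(\log M)^{3/2}(\sum_{m\le M}|a_m|)^2\big)$.

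I expect the main obstacle to be the bookkeeping in the reduction from primitive cubic Dirichlet characters mod $q$ to cubic residue symbols over $\mathbb{Z}[\omega_3]$: one must handle the correspondence carefully (primes $p\equiv 1\bmod 3$ split, $p\equiv 2\bmod 3$ are inert, the prime $3$ is excluded by hypothesis), track the units $\{\pm1,\pm\omega_3,\pm\omega_3^2\}$ and the normalization to primary elements, and verify that on the support of the sum the symbol $\big(\tfrac{m_1m_2^2}{\mathfrak{q}}\big)_3$ is genuinely trivial exactly when $m_1m_2^2$ is a rational cube (the ``is a cube'' condition in the lemma), and is otherwise a nontrivial Hecke character in the $\mathfrak{q}$-variable so that Pólya--Vinogradov applies nontrivially. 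Once this is in place, the rest is routine: the diagonal count $\#\F\ll Q$ is standard, and the off-diagonal estimate is a direct application of the Hecke Pólya--Vinogradov bound exactly as in \cite[\S6]{DDLL23}, with an appeal to Elliott \cite[Lemma 33]{El70} for the prime-modulus prototype.
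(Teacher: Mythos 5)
Your proposal matches the paper's proof: expand the square, use $\overline{\chi(m_2)}=\chi(m_2)^2$ and $\#\F\ll Q$ for the diagonal (cube) terms, and bound the off-diagonal terms by the estimate $\sum_{\chi\in\F}\chi(m_1m_2^2)\ll\sqrt{Q}(\log Q)(m_1m_2)^{1/2}\log(m_1m_2^2)^{3/2}$ for non-cubes, which is exactly Lemma 3.5 of \cite{DDLL23} (cubic reciprocity plus P\'olya--Vinogradov for Hecke characters over $\mathbb{Q}(\omega_3)$). One bookkeeping correction: the conductor is controlled by the radical of $m_1m_2^2$, which divides $m_1m_2\le M^2$, so its square root is $\le M$ rather than the $\sqrt{M^3}$ you wrote --- this is what produces the factor $M$ (not $M^{3/2}$) in the stated error term.
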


\begin{proof}
By expanding the square we find that 
\begin{equation}\label{Eq:Orthogonality1}
\sum_{\chi \in  \F} \left| \sum_{m\leq M} a_m \chi(m)\right|^2= \sum_{m_1, m_2\leq M} a_{m_1}\overline{a_{m_2}}\sum_{\chi\in \F} \chi(m_1m_2^2),
\end{equation}
since $\overline{\chi(m_2)}=\chi(m_2)^2.$ The contribution of the diagonal terms $m_1, m_2$ for which $m_1m_2^2$ is a cube is 
$$\ll Q\sum_{\substack{m_1, m_2\leq M\\ m_1m_2^2 \textup{ is a cube}}} |a_{m_1}a_{m_2}|, $$
since $|\F|\ll Q.$ Now if $m_1m_2^2$ is not a cube, we use Lemma 3.5 of \cite{DDLL23},
which implies that
$$\sum_{\chi\in \F} \chi(m_1m_2^2)\ll \sqrt{Q}(\log Q)(m_1m_2)^{1/2}\log(m_1m_2^2)^{3/2}.$$
Therefore, the contribution of the off-diagonal terms to the left hand side of \eqref{Eq:Orthogonality1} is 
$$ \ll \sqrt{Q}(\log Q)M(\log M)^{3/2}\left(\sum_{m\leq M}|a_m|\right)^2,$$
as desired.
 \end{proof}



\section{The smooth part in P\'olya's Fourier expansion: Proof of Theorem \ref{Thm: smooth}}\label{Sec: smooth part}

Our proof of Theorem \ref{Thm: smooth} 
builds on the recent work of Lamzouri and Mangerel \cite{LaMa22}. Before that, we fix a few notations. Let $y\geq 1$. For any two completely multiplicative functions $f$ and $g$ with $|f(n)|, |g(n)|\leq 1$ for all $n\geq 1$, we define
\begin{align}\label{Def: Distance}
    \mathbb{D}(f, g; y):=\bigg(\sum_{p\leq y}\dfrac{1-\re(f(p)\overline{g(p)})}{p}\bigg)^{1/2}.
\end{align}
We say that $f$ \emph{pretends} to be $g$ (up to $y$) if there is a constant $\delta \in [0,1)$ such that
\[\mathbb{D}(f, g; y)^2\leq \delta\log_2y.\]
Note that by Mertens' theorem, we have
\begin{align*}
    \mathbb{D}(f, g; y)^2\leq 2\log_2y + O(1).
\end{align*}
Next, for any $x\geq 2$ and $T>0$, we define
\begin{align}\label{Def: Mdistance}
    \mathcal{M}(f; x, T):=\min_{|t|\leq T}\mathbb{D}(f, n^{it}; x)^2.
\end{align}

We begin with the following result of Lamzouri and Mangerel  \cite{LaMa22}.

\begin{lem}[Major arc estimates]\label{Lem: major1}
Let $N\geq 2$ and $y\geq 100$. Let $1\leq r\leq (\log y)^{4/11}$ be an integer and $(b, r)=1$. Let $\chi\pmod q$ be an even primitive character. Suppose that as $\psi$ ranges over all primitive characters of conductor less than $r$, the minimum of $\mathcal{M}(\chi\overline{\psi}; y, (\log y)^{-7/11})$ is attained when $\psi=\xi\pmod m$. Then, we have
\begin{align*}
&\sum_{\substack{1\leq |n|\leq N\\ P^+(n)\leq y}}\dfrac{\chi(n)e(nb/r)}{n}\\
&  \ll \begin{cases} \displaystyle{\dfrac{m^{3/2}}{\varphi(m) r}\prod_{p\mid \frac{r}{m}}\left(1+\frac{p+1}{p-1}\right)(\log y)e^{-\mathcal{M}(\chi\overline{\xi}; y, (\log y)^{-{7}/{11}})} + (\log y)^{{9}/{11}+o(1)}} & \textup{ if } \xi \textup{ is odd and } m\mid r,\\
    (\log y)^{9/11+o(1)} & \textup{ otherwise}.\end{cases}
\end{align*}
In particular, in the first case, we have
$$\sum_{\substack{1\leq |n|\leq N\\ P^+(n)\leq y}}\dfrac{\chi(n)e(nb/r)}{n}\ll \frac{{m}^{1/2}}{\varphi(m)}(\log y)e^{-\mathcal{M}(\chi\overline{\xi}; y, (\log y)^{-{7}/{11}})} + (\log y)^{9/11+o(1)}.$$
\end{lem}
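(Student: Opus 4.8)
The plan is to follow the major-arc analysis of Lamzouri and Mangerel \cite{LaMa22}. First, since $\chi$ is even one has $\chi(-n)=\chi(n)$, so the sum in question equals $2i\sum_{1\le n\le N,\,P^+(n)\le y}\chi(n)\sin(2\pi nb/r)/n$. I would then write each $n=n_1n_2$ with $n_1$ supported on the (few) primes dividing $r$ and $(n_2,r)=1$, and for each fixed $n_1$ expand the additive character via the Gauss-sum identity $e(\pm n_2\beta/r')=\varphi(r')^{-1}\sum_{\psi\bmod r'}\overline\psi(\beta)\,\tau(\psi)\,\overline\psi(n_2)$, where $r'=r/(r,n_1)$ and $\beta$ is coprime to $r'$. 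Taking the difference of the $+$ and $-$ expansions kills all even $\psi$ and doubles the odd ones, so the whole expression becomes a finite combination --- over $n_1$ and over \emph{odd} characters $\psi\bmod r'$ --- of inner sums $\sum_{n_2,\,P^+(n_2)\le y,\,(n_2,r)=1}(\chi\overline{\psi^*})(n_2)/n_2$, where $\psi^*$ is the primitive character (of conductor $d\mid r$) inducing $\psi$; the coefficient of such a term is at most $(|\chi(n_1)|/n_1)\cdot|\tau(\psi)|/\varphi(r')$, and $|\tau(\psi)|\le\sqrt d$, vanishing unless $r'/d$ is squarefree and coprime to $d$.

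Second, I would bound each inner sum by the logarithmic Halász-type estimate for $y$-smooth completely multiplicative functions proved in \cite{LaMa22}: for any completely multiplicative $f$ with $|f|\le1$ and any $X\ge2$, $\bigl|\sum_{n\le X,\,P^+(n)\le y}f(n)/n\bigr|\ll(\log y)\,e^{-\mathcal M(f;y,(\log y)^{-7/11})}+(\log y)^{9/11+o(1)}$. Removing the condition $(n_2,r)=1$ and absorbing the unimodular $n_1$-twist perturbs the relevant distance by $O(\sum_{p\mid r}1/p)=O(\log_3 y)$, hence the exponential by a factor $(\log y)^{o(1)}$; so each inner sum is $\ll(\log y)e^{-\mathcal M(\chi\overline{\psi^*};\,y,\,(\log y)^{-7/11})}+(\log y)^{9/11+o(1)}$.

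Third comes a repulsion argument isolating the main term. Each relevant character $\psi^*\overline\xi$ (or $\psi^*$ itself, if $\xi$ is principal) is either principal or has conductor at most $r^2\le(\log y)^{8/11}$; and for a nonprincipal character $\eta$ of such small conductor the classical bound $L(1+is,\eta)\ll\log(\mathrm{cond}(\eta)(|s|+2))$ forces $\mathbb D(\eta,n^{is};y)^2=(1+o(1))\log_2 y$ for all $|s|\le2(\log y)^{-7/11}$. With the triangle inequality for $\mathbb D$ this shows that at most one primitive character of conductor $<r$ can make $\mathcal M(\chi\overline{\psi^*};y,(\log y)^{-7/11})$ substantially below $\log_2 y$, and by construction it is $\xi$; moreover only odd $\psi^*$ survive the first step, so no such term can survive unless $\xi$ is odd and $m\mid r$. (Primitive characters of conductor exactly $r$ contribute at most $(\sqrt r/\varphi(r))\log y\ll(\log y)^{9/11+o(1)}$ in any case --- this is why $\xi$ need only be the minimizer over conductors $<r$.) Summing the contributions of all the other odd $\psi^*$ over $n_1$ and $\psi$, using $|\tau(\psi)|\le\sqrt r$, $\sum_{n_1\mid r^\infty}|\chi(n_1)|/n_1\ll\log_2 r$, $1/\varphi(r')\ll(\log_2 r)/r'$ and $r\le(\log y)^{4/11}$, keeps their total $\ll(\log y)^{9/11+o(1)}$, which gives the ``otherwise'' case. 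When $\xi$ is odd and $m\mid r$, the only term that can exceed $(\log y)^{9/11+o(1)}$ is that with $\psi^*=\xi$; a direct computation of its Gauss-sum factor $\tau(\psi)=\mu(r'/m)\xi(r'/m)\tau(\xi)$ (with $|\tau(\xi)|=\sqrt m$) together with the $n_1$-summation produces the coefficient $\frac{m^{3/2}}{\varphi(m)r}\prod_{p\mid r/m}(1+\frac{p+1}{p-1})$, giving the first case. The ``in particular'' bound then follows from the elementary estimate $\frac{m^{3/2}}{\varphi(m)r}\prod_{p\mid r/m}(1+\frac{p+1}{p-1})=\frac{m^{1/2}}{\varphi(m)}\cdot\frac{m}{r}\prod_{p\mid r/m}\frac{2p}{p-1}\ll\frac{m^{1/2}}{\varphi(m)}$, since $m/r\le\prod_{p\mid r/m}p^{-1}$ and $\prod_{p\mid r/m}\frac{2}{p-1}\ll1$. (In the intended applications $N$ exceeds any fixed power of $y$, so the truncation $n\le N$ costs only a negligible Rankin-type tail and one may work with complete smooth sums.)

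The step I expect to be the main obstacle is the logarithmic Halász-type inequality for $y$-smooth completely multiplicative functions with the precisely calibrated parameters $(\log y)^{-7/11}$ and $(\log y)^{9/11+o(1)}$, the exponents being tied to the range $r\le(\log y)^{4/11}$ through $4/11+7/11=1$: this is the genuine analytic content, resting on a smooth analogue of Halász's theorem together with mean-value and zero-density inputs in the style of Granville and Soundararajan \cite{GrSo07, LaMa22}. By comparison, the Gauss-sum bookkeeping of the first step --- in particular the extraction of the arithmetic factor $\frac{m^{3/2}}{\varphi(m)r}\prod_{p\mid r/m}(1+\frac{p+1}{p-1})$ from the $n_1$-summation --- and the repulsion estimate of the third step are delicate but routine.
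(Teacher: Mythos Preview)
Your proposal reconstructs exactly the major-arc argument of Lamzouri--Mangerel that the paper cites; the paper's own proof is nothing more than a reference to \cite[Theorem~2.3, Eq.~(4.3) onwards]{LaMa22}, and your three steps (Gauss-sum expansion after splitting off the $r$-part of $n$, the logarithmic Hal\'asz inequality for $y$-smooth sums, and the repulsion argument isolating $\xi$) are precisely the ingredients of that proof.

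One small gap worth flagging: your parenthetical claim that primitive characters of conductor \emph{exactly} $r$ contribute at most $(\sqrt r/\varphi(r))\log y\ll(\log y)^{9/11+o(1)}$ is only valid when $r$ is of maximal size $\asymp(\log y)^{4/11}$; for bounded $r$ (e.g.\ $r=3$) one has $\sqrt r/\varphi(r)\asymp 1$ and the asserted bound fails. This is not really a flaw in your strategy but rather reflects an imprecision in the lemma's hypothesis ``conductor less than $r$'': in both places where the paper applies this lemma (the proofs of Theorems~\ref{Thm: smooth} and~\ref{Thm:Structure}), $\xi$ is actually taken to be the minimizer over all conductors $\le(\log y)^{4/11}$, and it is with that range that the repulsion argument disposes of every competing $\psi^*$ cleanly. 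With that reading, your sketch is correct and complete.
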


\begin{proof} 
This follows from the proof of Theorem 2.3 of \cite{LaMa22} (see \cite[Eq (4.3)]{LaMa22} onwards). 
    \end{proof}

    To apply the above lemma efficiently, we need a reasonable lower bound for the quantity $\mathcal{M}(\chi\overline{\psi}; y, (\log y)^{-7/11})$, which is given in the following lemma.

\begin{lem}\label{Lem: major2}
    Let $y\geq 100$, $\alpha\in (0,1)$ and $\varepsilon>0$ be small. Let $\chi\pmod q$ be a primitive cubic character. Let $\psi$ be an odd primitive character modulo $m$, with $m\leq (\log y)^{4\alpha/7}$, and let $k$ be the order of $\psi$. Then, 
    \begin{align*}
        \mathcal{M}(\chi\overline{\psi}; y, (\log y)^{-\alpha})\geq \bigg(1-\beta &+ \dfrac{\alpha\pi^2\beta}{36k^2}\bigg)\log_2 y  + O_{\alpha}(\log_2m)\\
        &- \begin{cases}
        \varepsilon\log m & \text{if $m$ is an exceptional modulus}\\
        0, &\text{otherwise}.
    \end{cases}
    \end{align*} 
\end{lem}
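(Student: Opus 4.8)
The plan is to estimate $\mathcal{M}(\chi\overline{\psi}; y, (\log y)^{-\alpha}) = \min_{|t|\leq (\log y)^{-\alpha}} \mathbb{D}(\chi\overline{\psi}, n^{it}; y)^2$ from below by exploiting the fact that $\chi$ is cubic. First I would fix the optimal $t$ and write
\[
\mathbb{D}(\chi\overline{\psi}, n^{it}; y)^2 = \sum_{p\leq y}\frac{1-\re\big(\chi(p)\overline{\psi(p)}p^{-it}\big)}{p}.
\]
The key algebraic input is that $\chi(p)\in\{1,\omega_3,\omega_3^2\}$ (or $0$), so $\chi(p) = \omega_3^{j_p}$ for some $j_p\in\{0,1,2\}$, while $\psi(p)$ is a $k$-th root of unity. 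I would partition the primes $p\leq y$ (coprime to $qm$) according to the value of $\chi(p)\overline{\psi(p)}p^{-it}$, or more efficiently, use the standard ``third root of unity'' trick: for any $\theta$, averaging $1-\re(\omega_3^j e^{i\theta})$ over $j=0,1,2$ gives exactly $1$, but since $\chi$ is a genuine character we instead bound things using that at least one of the three rotations $\chi(p), \omega_3\chi(p), \omega_3^2\chi(p)$ is far from $\psi(p)p^{it}$. The cleanest route is the one from Lamzouri--Mangerel \cite{LaMa22}: consider $\sum_{p\leq y}\frac{1-\re(\chi^a(p)\overline{\psi}(p)p^{-it})}{p}$ for $a\in\{1,2\}$ and also the trivial character, sum over $a=0,1,2$, and use $\sum_{a=0}^{2}\omega_3^{aj}=3\cdot\mathbf{1}_{3\mid j}$ together with the prime number theorem to convert the resulting sum over $p$ with $3\mid j_p$ into something controlled by $\log L(1, \chi^3\bar\psi^?) = \log L(1,\bar\psi\cdot\text{triv})$-type quantities. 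This produces the main term $(1-\beta)\log_2 y$, where $\beta = 3\sqrt3/(2\pi)$ arises as $\max_{\theta}\frac{1}{2\pi}\int$ or rather as the extremal value $\frac{1}{3}\big(2\re(e^{i\cdot 2\pi/3\cdot?})\big)$ — concretely, $\beta$ is exactly $\frac{3}{2\pi}\cdot\frac{\sqrt3\,\pi}{3} $, the contribution of the ``best'' alignment of a cube root of unity.

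Second, I would extract the secondary term $\frac{\alpha\pi^2\beta}{36k^2}\log_2 y$ from the constraint $|t|\leq (\log y)^{-\alpha}$. The point is that $n^{it}$ cannot be perfectly aligned with $\chi\overline\psi$ over the whole range $p\leq y$: writing $p^{-it} = e^{-it\log p}$, the phase $t\log p$ varies as $p$ ranges up to $y$, covering an interval of length $|t|\log y \leq (\log y)^{1-\alpha}$, which is large. So for the alignment to be good for most primes one needs $t$ essentially $0$, but then a quantitative version of the PNT (or partial summation against $\sum_{p\le u}1/p = \log_2 u + M + o(1)$) shows the best one can do still leaves a deficit. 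The factor $1/k^2$ enters because $\psi$ has order $k$: the values $\psi(p)$ lie among $k$ equally spaced points, and combined with the cube constraint the ``granularity'' of achievable values of $\chi(p)\overline{\psi(p)}$ on the unit circle limits how close $\re(\chi(p)\overline{\psi(p)}p^{-it})$ can be to $1$ on average; one optimizes a quadratic in the phase discrepancy, giving the $\pi^2/(36k^2)$ constant. I would follow the exact computation in \cite{LaMa22} (their Lemma analogous to this, in the proof of Theorem 2.3) adapted so that here $\psi$ is a fixed odd character rather than ranging.

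Third, I would handle the error terms. The term $O_\alpha(\log_2 m)$ comes from primes $p\mid m$ (or $p\leq m$) being excluded or mishandled, and from the conductor of $\psi$ entering the relevant $L$-function estimates; this is a routine bookkeeping loss. The genuinely delicate term is $-\varepsilon\log m$ in the exceptional case: when $m$ is an exceptional modulus in the sense of Definition \ref{Def: exceptional}, the $L$-function $L(s,\psi)$ (or $L(s,\chi\overline\psi)$) has a zero very close to $s=1$, which via the explicit formula / Landau's argument forces $\sum_{p\leq y}\frac{\re(\psi(p)p^{-it})}{p}$ to be anomalously large (close to $\log_2 y$ rather than $O(1)$), eating into the lower bound. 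To quantify this I would invoke the standard relation between a Siegel zero at $1-\delta$ and $\sum_{p\leq y}\psi(p)/p \approx -\log(\delta\log y) + O(1)$ for $y$ in the appropriate range, and show the damage is at most $\varepsilon\log m$ once $y$ is large enough relative to $m$ (which holds since $m\leq(\log y)^{4\alpha/7}$).

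The main obstacle I expect is getting the constants exactly right — pinning down that the cube-root-of-unity extremal contribution is precisely $\beta = 3\sqrt3/(2\pi)$ and that the $t$-deficit contributes exactly $\frac{\alpha\pi^2\beta}{36k^2}\log_2 y$ with no slack — and doing the two optimizations (over the choice of $\chi^a$ and over $t$) simultaneously and consistently. This is essentially a careful re-run of the Lamzouri--Mangerel computation with a fixed odd twisting character $\psi$ of order $k$ in place of their minimization over $\psi$, so I would lean heavily on \cite[proof of Theorem 2.3]{LaMa22}, isolating exactly where the order $k$ of $\psi$ and the constraint on $|t|$ feed in, and tracking the exceptional-modulus contribution separately via a Siegel-zero input.
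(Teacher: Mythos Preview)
Your plan is correct and aligns with the paper: the paper's entire proof is the single sentence ``This is a consequence of \cite[Proposition 2.1]{LaMa22} by taking $g=3$,'' and what you sketch is precisely the argument underlying that proposition. Two minor corrections: the relevant item in \cite{LaMa22} is Proposition~2.1 rather than the proof of Theorem~2.3, and your parenthetical arithmetic for $\beta$ is garbled (you compute $\sqrt{3}/2$, not $3\sqrt{3}/(2\pi)$), but neither affects the substance of the plan.
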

\begin{proof}
This is a consequence of \cite[Proposition 2.1]{LaMa22} by taking $g=3$.
\end{proof}

Let us now demonstrate how to complete the proof of Theorem \ref{Thm: smooth} using the above results.

\begin{proof}[Proof of Theorem \ref{Thm: smooth}]Recall that if $\chi\in \mathcal{F}_3(Q)$, $Z=Q^{21/40}$ and $y\leq \log Q$, we wish to show
\begin{align*}
    \max_{\alpha\in [0,1)}\bigg|\sum_{\substack{1\leq |n|\leq Z\\ P^+(n)\leq y}}\dfrac{\chi(n)e(n\alpha)}{n}\bigg|\ll (\log y)^{\beta}(\log_2y)^{-1/4+o(1)},
\end{align*}
where $\beta$ is given by \eqref{Def:beta}.

Let $\alpha \in [0,1)$. We apply Dirichlet's approximation theorem to obtain a rational approximation
\[\bigg|\alpha-\dfrac{b}{r}\bigg|\leq \dfrac{1}{rR},\]
where $1\leq r\leq R:=(\log y)^5$ and $(b, r)=1$. We will consider two cases, depending on whether $r$ is large or small. In order to make it precise, we set $M=(\log y)^{4/11}$. 

First, we deal with the minor arcs case, that is, when $M<r\leq R$. By \cite[Corollary 2.2]{Gol}, we have
\begin{align*}
   \bigg|\sum_{\substack{1\leq |n|\leq Z\\ P^+(n)\leq y}}\dfrac{\chi(n)e(n\alpha)}{n}\bigg|\ll \log R +  \dfrac{(\log M)^{5/2}}{M^{1/2}}(\log y) + \log_2y\ll (\log y)^{9/11}(\log_2 y)^{5/2},
\end{align*}
which is acceptable since $\beta>9/11$.

Let us now deal with the major arcs case, that is, when $1\leq r\leq M$. Set $N=\min\{Z, |r\alpha-b|^{-1}\}$. Then, we apply \cite[Lemma 4.1]{Gol} to obtain
\begin{align*}
    \sum_{\substack{1\leq |n|\leq Z\\ P^+(n)\leq y}}\dfrac{\chi(n)e(n\alpha)}{n}=\sum_{\substack{1\leq |n|\leq N\\ P^+(n)\leq y}}\dfrac{\chi(n)e(nb/r)}{n} + O\bigg(\dfrac{(\log R)^{3/2}}{R^{1/2}}(\log y)^2 + \log R + \log_2y\bigg).
\end{align*}
Since $R=(\log y)^5$, the contribution of the big Oh term is $\ll \log_2y$. This implies that
\begin{align*}
    \sum_{\substack{1\leq |n|\leq Z\\ P^+(n)\leq y}}\dfrac{\chi(n)e(n\alpha)}{n}=\sum_{\substack{1\leq |n|\leq N\\ P^+(n)\leq y}}\dfrac{\chi(n)e(nb/r)}{n} + O(\log_2y).
\end{align*}
So, the goal now reduces to showing that
\begin{align}\label{Eqn:smooth1}
    \sum_{\substack{1\leq |n|\leq N\\ P^+(n)\leq y}}\dfrac{\chi(n)e(nb/r)}{n} \ll (\log y)^{\beta}(\log_2y)^{-1/4+o(1)}.
\end{align}
Let $\xi$ be the character of conductor $m\leq (\log y)^{4/11}$ that minimizes $\mathcal{M}(\chi\overline{\psi}; y, (\log y)^{-7/11})$. We consider two cases depending on whether $\xi$ is even or odd.

 If $\xi$ is even, then by Lemma \ref{Lem: major1}, we have
\begin{align*}
    \sum_{\substack{1\leq |n|\leq N\\ P^+(n)\leq y}}\dfrac{\chi(n)e(nb/r)}{n} \ll (\log y)^{9/11 + o(1)} \ll (\log y)^{\beta-\ep}.
\end{align*}

On the other hand, if $\xi$ is odd, we will apply Lemmas \ref{Lem: major1} and \ref{Lem: major2}. To do that, let us write $k$ to denote the order of $\xi$. For brevity, let us write $\eta(m)=1$ if $m$ is an exceptional modulus and $\eta(m)=0$ otherwise. Then, for some explicit constant $c_1>0$, we have
\begin{align*}
   \sum_{\substack{1\leq |n|\leq N\\ P^+(n)\leq y}}\dfrac{\chi(n)e(nb/r)}{n}  \ll \dfrac{{m}^{1/2}}{\varphi(m)}(\log y)^\beta \exp\bigg(-\dfrac{c_1}{k^2}\log_2y+\eta(m)\varepsilon\log m + O(\log_2m)\bigg).
\end{align*}
Since $1/\varphi(m)\ll (\log_2 m)/m$  we get
\begin{align*}
    \sum_{\substack{1\leq |n|\leq N\\ P^+(n)\leq y}}\dfrac{\chi(n)e(nb/r)}{n}\ll (\log y)^\beta \exp\bigg(-\bigg(\dfrac{1}{2}-\eta(m)\varepsilon\bigg)\log m -\dfrac{c_1}{m^2}\log_2y + c_2\log_2m\bigg),
\end{align*}
for some positive constant $c_2$. 
Optimizing the expression inside the exponential in terms of $m$ shows that the maximum is attained when  $m\asymp (\log _2 y)^{1/2}$. This implies 
\begin{align*}
    \sum_{\substack{1\leq |n|\leq N\\ P^+(n)\leq y}}\dfrac{\chi(n)e(nb/r)}{n}\ll (\log y)^\beta (\log_2y)^{-1/4 + o(1)},
\end{align*}
as desired.
\end{proof}

\section{The distribution of the tail in P\'olya's Fourier expansion: Proof of Theorem \ref{Thm:TailPolya}} \label{Sec: tail}
Let $\{a_n\}_{n\geq 1}$ be an arbitrary sequence of complex numbers such that $|a_n|\leq 1$ for all integers $n\geq 1$. We start by proving the following elementary lemma, which is a generalization of  \cite[Lemma 4.4] {La22}. This will be used to reduce the problem of bounding the moments and the distribution of  the quantity 
\begin{equation}\label{Eq:TailPolyaTwisted}
\max_{t\in [0,1]}\max_{\alpha\in [0, 1)}\Bigg|\sum_{1\leq n\leq Z} \frac{\chi(n) a_n e(n\alpha)}{n^{1+it}}\Bigg|
\end{equation}
to a setting where we can apply the large sieve inequalities in Lemmas \ref{Lem:LargeSieve} and \ref{Lem:Orthogonality}.
\begin{lem}\label{Lem:Reduction}
Let $\mathcal{X}$ be a set of Dirichlet characters, and $2\leq N_1 <N_2\leq R$ be real numbers. Let $\{a_n\}_{n\geq 1}$ be an arbitrary sequence of complex numbers such that $|a_n|\leq 1$ for all $n$. Define  
$\mathcal{A}:= \{b/R : 1\leq b\leq R\},$ and $\mathcal{T}:= \{\ell/T  : 0\leq \ell\leq T\}$, where $T=R(\log N_2)^2/N_2$. Then, for any positive integer $k\geq 1$, we have 
\begin{align*}
&\Bigg(\sum_{\chi\in \mathcal{X}} \max_{t\in [0, 1]}\max_{\alpha\in [0,1)} \Bigg|\sum_{N_1\leq  n\leq N_2} \frac{\chi(n) a_n e(n\alpha)}{n^{1+it}}\Bigg|^{2k}\Bigg)^{1/2k} \\
& \leq \Bigg(\sum_{t\in \mathcal{T}}\sum_{\alpha \in \mathcal{A}}  \sum_{\chi\in \mathcal{X}} \Bigg|\sum_{N_1^k\leq n\leq N_2^k}\frac{\chi(n)g_{N_1, N_2, k}(n, \alpha)}{n^{1+it}}\Bigg|^{2}\Bigg)^{1/2k}+ O\left(
|\mathcal{X}|^{1/2k} N_2R^{-1}\right),
\end{align*}
where 
\begin{equation}\label{Defgn}
g_{N_1, N_2, k}(n, \alpha):= \sum_{\substack{N_1\leq n_1, \dots, n_k \leq N_2\\ n_1\cdots n_k= n}}\prod_{j=1}^k a_{n_j}e(n_j\alpha),
\end{equation}
and the implicit constant in the error term is absolute. We also note that \begin{equation}\label{Eq:Bounbdgn}
|g_{N_1, N_2, k}(n, \alpha)|\leq d_k(n),   
\end{equation} for all positive integers $k$ and $n$, uniformly in $N_1,N_2$, and $\alpha.$
\end{lem}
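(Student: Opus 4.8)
The plan is to reduce the maximum over the continuous parameters $t\in[0,1]$ and $\alpha\in[0,1)$ to a maximum (and then a sum) over the finite grids $\mathcal{T}$ and $\mathcal{A}$, at the cost of a controlled error, and then to linearize the resulting $2k$-th power into a single Dirichlet sum with coefficients $g_{N_1,N_2,k}(n,\alpha)$ by expanding the product. First I would handle the discretization in $\alpha$: writing $e(n\alpha)=e(nb/R)\cdot e(n(\alpha-b/R))$ where $b/R$ is the nearest grid point, so $|\alpha-b/R|\le 1/(2R)$, and using $|e(n\theta)-1|\le 2\pi|n\theta|\le \pi n/R \le \pi N_2/R$ for $n\le N_2$, one sees that replacing $e(n\alpha)$ by $e(nb/R)$ changes each summand $\chi(n)a_n e(n\alpha)/n^{1+it}$ by at most $\pi N_2/(Rn)$ in absolute value; summing over $N_1\le n\le N_2$ gives a total error $O(N_2 R^{-1}\log N_2)$, which after the trivial absorption of logarithmic factors (or by being slightly more careful with the grid spacing) is $O(N_2/R)$. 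The analogous argument discretizes $t$: write $n^{-it}=n^{-i\ell/T}n^{-i(t-\ell/T)}$ with $|t-\ell/T|\le 1/(2T)$, and use $|n^{-i\theta}-1|=|e^{-i\theta\log n}-1|\le |\theta|\log n\le \log N_2/(2T)$; summing $\sum_{N_1\le n\le N_2}(\log N_2)/(Tn)\ll (\log N_2)^2/T = N_2/R$ by our choice $T=R(\log N_2)^2/N_2$. Thus the supremum over $[0,1]\times[0,1)$ is, up to an additive error $O(N_2/R)$, bounded by the maximum over the finite set $\mathcal{T}\times\mathcal{A}$.

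Next I would pass from the maximum over the grid to the $2k$-th moment sum over the grid: for each fixed character $\chi$, $\max_{t\in\mathcal{T}}\max_{\alpha\in\mathcal{A}}|S(\chi,t,\alpha)|^{2k}\le \sum_{t\in\mathcal{T}}\sum_{\alpha\in\mathcal{A}}|S(\chi,t,\alpha)|^{2k}$, where $S(\chi,t,\alpha)=\sum_{N_1\le n\le N_2}\chi(n)a_ne(n\alpha)/n^{1+it}$. Summing over $\chi\in\mathcal{X}$ and using Minkowski's inequality (in the form $(\sum_\chi \max(\cdots))^{1/2k}\le (\sum_\chi \sum_{grid}(\cdots))^{1/2k}+ O(|\mathcal{X}|^{1/2k}N_2/R)$, where the error term simply records the propagated discretization error raised to the $1/2k$ power and bounded via $(x+y)^{1/2k}\le x^{1/2k}+y^{1/2k}$ for nonnegative $x,y$) gives exactly the shape of the claimed inequality, provided we identify $\sum_\chi|S(\chi,t,\alpha)|^{2k}$ with a sum of the asserted form. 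For the latter, I expand $|S(\chi,t,\alpha)|^{2k}=S(\chi,t,\alpha)^k\overline{S(\chi,t,\alpha)}^k$; multiplying out the $k$-fold products collects terms indexed by $(n_1,\dots,n_k)$ and $(n_{k+1},\dots,n_{2k})$ in $[N_1,N_2]$, and grouping by $n=n_1\cdots n_k$ and $n'=n_{k+1}\cdots n_{2k}$ — both lying in $[N_1^k,N_2^k]$ — exhibits $S(\chi,t,\alpha)^k=\sum_{N_1^k\le n\le N_2^k}\chi(n)g_{N_1,N_2,k}(n,\alpha)n^{-1-it}$. Since $\chi$ is multiplicative and completely so on the relevant range (or, since we only need $\chi(n_1)\cdots\chi(n_k)=\chi(n)$ when $n=n_1\cdots n_k$, which holds for any Dirichlet character), this is precisely $|\sum_{N_1^k\le n\le N_2^k}\chi(n)g_{N_1,N_2,k}(n,\alpha)n^{-1-it}|^2$, as required.

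Finally, the bound $|g_{N_1,N_2,k}(n,\alpha)|\le d_k(n)$ is immediate from the definition \eqref{Defgn}: each summand is a product of $k$ terms $a_{n_j}e(n_j\alpha)$ each of modulus $\le 1$, and the number of summands is the number of ordered factorizations $n=n_1\cdots n_k$ with the $n_j$ in the given range, which is at most the total number of ordered $k$-fold factorizations of $n$, namely $d_k(n)$; this is uniform in $N_1,N_2,\alpha$ and $t$. I expect the only mildly delicate point to be the bookkeeping of the two discretization errors — making sure the logarithmic losses from summing $\sum 1/n$ over dyadic-type ranges are genuinely absorbed into $O(N_2/R)$ with the stated choice of $T$, and that raising the sum of a main term and an error to the power $1/2k$ and splitting via subadditivity produces exactly the error term $O(|\mathcal{X}|^{1/2k}N_2 R^{-1})$ rather than something larger; the algebraic identity expanding $|S|^{2k}$ into the $g$-sum is routine. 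No serious obstacle is anticipated, as this is a technical reduction lemma generalizing \cite[Lemma 4.4]{La22} to allow the extra twist by $n^{it}$.
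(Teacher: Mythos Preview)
Your proposal is correct and follows essentially the same approach as the paper: discretize $\alpha$ and $t$ to the grids $\mathcal{A}$ and $\mathcal{T}$ with error $O(N_2/R)$, bound the max over the grid by the sum, apply Minkowski, and then rewrite $|S(\chi,t,\alpha)|^{2k}$ as $|\sum \chi(n)g_{N_1,N_2,k}(n,\alpha)n^{-1-it}|^2$. One small correction: in the $\alpha$-discretization, rather than bounding $|e(n\theta)-1|\le \pi N_2/R$ (which costs you a spurious $\log N_2$), use the sharper $|e(n\theta)-1|\ll n/R$, so the per-term error is $O(1/R)$ and summing over $N_1\le n\le N_2$ gives $O(N_2/R)$ directly---this is what the paper does, and it is what ``being slightly more careful'' should mean (adjusting the grid spacing would not remove the logarithm).
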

\begin{proof}
Let $\alpha\in [0, 1)$ and $t\in [0, 1]$. Then there exist $\lambda_{\alpha}\in \mathcal{A}$ and $s_t\in \T$ such that $|\alpha-\lambda_{\alpha}|\leq 1/R$ and $|t-s_t|\leq 1/T$.
In this case we have $e(n\alpha)= e(n\lambda_{\alpha})+ O(n/R),$
 and $n^{-1-it}= n^{-1-is_t}+O((\log n)/(nT)).$ Therefore, we deduce that
$$\max_{t\in [0, 1]}\max_{\alpha\in [0,1)} \Bigg|\sum_{N_1\leq n\leq N_2} \frac{\chi(n)a_n e(n\alpha)}{n^{1+it}}\Bigg| = \max_{s\in \T}\max_{\lambda\in \mathcal{A}} \Bigg|\sum_{N_1\leq n\leq N_2} \frac{\chi(n) a_ne(n\lambda)}{n^{1+is}}\Bigg| +O\left(\frac{N_2}{R}\right). 
 $$
We now use Minkowski's inequality, which implies 
\begin{align*}
 &\Bigg(\sum_{\chi \in \mathcal{X}} \max_{t\in [0,1]}\max_{\alpha\in [0,1)} \Bigg|\sum_{N_1\leq n\leq N_2} \frac{\chi(n) a_ne(n\alpha)}{n^{1+it}}\Bigg|^{2k}\Bigg)^{1/2k}\\
 & \leq  \Bigg(\sum_{\chi \in \mathcal{X}} \max_{t\in \T}\max_{\alpha \in \mathcal{A}} \Bigg|\sum_{N_1\leq n\leq N_2} \frac{\chi(n) a_ne(n\alpha)}{n^{1+it}}\Bigg|^{2k}\Bigg)^{1/2k}+ O\left(|\mathcal{X}|^{1/2k} N_2R^{-1}\right)\\
 & \leq  \Bigg(\sum_{t\in \T}\sum_{\alpha \in \mathcal{A}}\sum_{\chi \in \mathcal{X}}  \Bigg|\sum_{N_1\leq  n\leq N_2} \frac{\chi(n)a_n e(n\alpha)}{n^{1+it}}\Bigg|^{2k}\Bigg)^{1/2k}+ O\left(|\mathcal{X}|^{1/2k} N_2R^{-1}\right).\\
\end{align*}
The lemma follows upon noting that 
\[\Bigg|\sum_{N_1\leq n\leq N_2}\frac{\chi(n) a_ne(n\alpha)}{n^{1+it}}\Bigg|^{2k}= \Bigg|\sum_{N_1^k\leq n\leq N_2^k} \frac{\chi(n)g_{N_1, N_2, k}(n, \alpha)}{n^{1+it}}\Bigg|^{2}.\]
\end{proof}
Using the above result together with the  classical large sieve inequality in Lemma \ref{Lem:LargeSieve}, we prove that the part of the sum \eqref{Eq:TailPolyaTwisted} corresponding to the range  $Q^{\delta}\leq n\leq Q^{21/40}$ is very small for all cubic characters $\chi\in \F$, except for a set of size $O(Q^{1-\delta})$, for some small $\delta>0.$

\begin{lem}\label{Lem:TailLarge} 
Let  $\delta=1/1000$. Let $\{a_n\}_{n\geq 1}$ be an arbitrary sequence of complex numbers such that $|a_n|\leq 1$ for all $n$.  
There are at most $O(Q^{1-\delta})$ cubic characters $\chi\in \F$ such that 
\begin{equation}\label{Eq:ConditionLargeTail}
\max_{t\in [0,1]}\max_{\alpha\in [0, 1)}\left|\sum_{Q^{\delta}\leq n\leq Q^{21/40}} \frac{\chi(n) a_ne(n\alpha)}{n^{1+it}}\right|\geq \frac{1}{\log Q}.
\end{equation}
\end{lem}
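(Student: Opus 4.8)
The goal is a standard second-moment/large-sieve argument applied to the high end of the P\'olya sum. I would take $k$ to be a large fixed integer (to be optimized at the end, of order $1/\delta$ or so), and apply Lemma~\ref{Lem:Reduction} with $\mathcal{X}=\mathcal{F}_3(Q)$, $N_1=Q^{\delta}$, $N_2=Q^{21/40}$, and $R$ a fixed power of $Q$ (say $R=Q$, so the error term $N_2R^{-1}=Q^{-19/40}$ is negligible against the target threshold). This reduces the $2k$-th moment of the quantity in \eqref{Eq:ConditionLargeTail} over $\chi\in\mathcal{F}_3(Q)$ to
\[
\sum_{t\in\mathcal{T}}\sum_{\alpha\in\mathcal{A}}\ \sum_{\chi\in\mathcal{F}_3(Q)}\Bigl|\sum_{Q^{k\delta}\leq n\leq Q^{21k/40}}\frac{\chi(n)\,g(n,\alpha)}{n^{1+it}}\Bigr|^2,
\]
where $|g(n,\alpha)|\leq d_k(n)$ by \eqref{Eq:Bounbdgn}. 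Since $\#\mathcal{F}_3(Q)\ll Q$ and $\#\mathcal{A},\#\mathcal{T}$ are polynomially bounded in $Q$, it suffices to bound the inner character sum for each fixed $t,\alpha$.

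For each fixed $t$ and $\alpha$, apply the classical large sieve, Lemma~\ref{Lem:LargeSieve}. Embedding $\mathcal{F}_3(Q)$ into all primitive characters of conductor $\leq Q$ and setting $a_m = g(m,\alpha)m^{-1-it}\mathbf{1}_{Q^{k\delta}\leq m\leq Q^{21k/40}}$, Lemma~\ref{Lem:LargeSieve} gives a bound $\ll (Q^2 + Q^{21k/40})\sum_{Q^{k\delta}\leq m\leq Q^{21k/40}} d_k(m)^2 m^{-2}$. The $m^{-2}$ weight and the restriction $m\geq Q^{k\delta}$ make the divisor sum tiny: by partial summation and the standard bound $\sum_{m\leq x}d_k(m)^2\ll x(\log x)^{k^2-1}$ one gets $\sum_{m\geq Q^{k\delta}} d_k(m)^2 m^{-2}\ll Q^{-k\delta}(\log Q)^{O_k(1)}$. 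Provided $k$ is chosen large enough that $k\delta > 21k/40 - 2$, i.e. $k < 2/(21/40-\delta)$, wait — rather the relevant comparison is $21k/40 \le 2$, so I should instead take $k$ with $21k/40\le 2$, equivalently $k\le 3$; but with $k$ small the final savings $Q^{-k\delta}$ is weak. So in fact one wants the large-sieve length $Q^{21k/40}$ to be dominated by $Q^2$, forcing $k\le 3$, and then the total moment bound is $\ll Q^{2+o(1)}\cdot Q^{-k\delta}\cdot Q^{o(1)} = Q^{2-k\delta+o(1)}$ after summing over the $Q^{o(1)}$ points of $\mathcal{A}\times\mathcal{T}$ (here we just need $R$ a small enough power of $Q$ that $R\cdot T = R^2(\log Q)^2/N_2$ stays $\le Q^{o(1)}$-ish; choosing $R = Q^{1/4}$, say, keeps everything polynomially controlled while the error $N_2 R^{-1}\le Q^{-1/4}$ stays small).

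Finally, convert the moment bound to a counting bound by Markov's inequality: the number of $\chi\in\mathcal{F}_3(Q)$ for which the quantity in \eqref{Eq:ConditionLargeTail} is $\geq 1/\log Q$ is at most $(\log Q)^{2k}$ times the $2k$-th moment, hence $\ll Q^{2-k\delta+o(1)}$. With $\delta=1/1000$ this is not yet $O(Q^{1-\delta})$ for a single fixed small $k$, so the actual argument must iterate: apply the reduction with a genuinely large $k$ (the $Q^{21k/40}$ term in Lemma~\ref{Lem:LargeSieve} is then the dominant one, contributing $Q^{21k/40}\cdot\sum d_k(m)^2m^{-2}\ll Q^{21k/40 - 2k\delta + o(1)}$ from the $m\ge Q^{k\delta}$ tail, wait this needs $21/40 < 2\delta$ which fails). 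The clean route, and the one I expect the authors take, is: the $m^{-2}$-weighted divisor sum over $m\ge Q^{k\delta}$ is $\ll Q^{-k\delta}(\log Q)^{O_k(1)}$, so the large sieve yields $\ll (Q^2+Q^{21k/40})Q^{-k\delta}(\log Q)^{O_k(1)}$ for each $(t,\alpha)$, and summing over $\#(\mathcal{A}\times\mathcal{T})\ll Q^{1/2}(\log Q)^2$ points and multiplying by $(\log Q)^{2k}$ from Markov gives a count $\ll (Q^{5/2}+Q^{1/2+21k/40})Q^{-k\delta}(\log Q)^{O_k(1)}$; taking $k$ a large absolute constant (so that $k\delta$ exceeds $5/2$ and also $21k/40 + 1/2 - k\delta < 1-\delta$, which holds once $k(1/40 + \delta) < 1/2 - \delta$... ) makes this $O(Q^{1-\delta})$. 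The main obstacle is precisely this bookkeeping: choosing $R$ and $k$ simultaneously so that (a) the reduction error $N_2/R$ beats $1/\log Q$, (b) the number of discretization points $\#\mathcal{A}\cdot\#\mathcal{T}$ stays a small power of $Q$, and (c) the savings $Q^{-k\delta}$ from the lower cutoff $n\ge Q^\delta$ overwhelms both the large-sieve factor $Q^2+Q^{21k/40}$ and the Markov factor $(\log Q)^{2k}$; everything else is a routine application of the large sieve and standard divisor-sum estimates.
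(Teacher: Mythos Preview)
Your plan has the right ingredients (Lemma~\ref{Lem:Reduction} followed by the classical large sieve and Markov), but the single-$k$ approach on the full interval $[Q^{\delta},Q^{21/40}]$ cannot be closed, and you essentially discover this yourself without resolving it. The obstruction is genuine: in the large sieve bound $(Q^2+N_2^k)\sum_{m\geq N_1^k}d_k(m)^2m^{-2}$ with $N_1=Q^{\delta}$ and $N_2=Q^{21/40}$, the savings from the tail sum is $Q^{-k\delta+o(1)}$, while the loss from the length is $Q^{21k/40}$. Since $21/40\gg \delta=1/1000$, the second term $Q^{21k/40-k\delta}$ diverges as $k$ grows, and for small $k$ (say $k\leq 3$, so that $Q^{21k/40}\leq Q^2$) the savings $Q^{-3\delta}$ is far too weak. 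No fixed $k$ works.

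The missing idea, which the paper uses, is a dyadic decomposition of $[Q^{\delta},Q^{21/40}]$ into blocks $[s_\ell,s_{\ell+1}]$ with $s_\ell\approx 2^\ell$, together with an \emph{adaptive} choice of $k=k_\ell$ on each block: one takes $k_\ell=\lfloor 2\log Q/\log s_\ell\rfloor+1$ when $s_\ell<\sqrt{Q}$ and $k_\ell=4$ otherwise, so that $s_\ell^{k_\ell}\geq Q^2$ always and $4\leq k_\ell\leq 2001$. On each block one applies Lemma~\ref{Lem:Reduction} with $R=s_\ell^{1+\delta}$ (so the reduction error is $s_\ell^{-\delta}$ and $\#(\mathcal{A}\times\mathcal{T})\ll s_\ell^{1+2\delta}$), then the large sieve. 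Because now $N_2^k\approx s_\ell^{k_\ell}\asymp Q^2$, the two large-sieve terms balance, and the divisor-weighted tail $\sum_{n>s_\ell^{k_\ell}}d_{k_\ell}(n)^2n^{-2}$ is controlled by $d_{k_\ell}(n)\ll n^{1/(10k_\ell)}$ (valid since $k_\ell$ is bounded) to give $s_\ell^{k_\ell}\cdot s_\ell^{-k_\ell+1/5}=s_\ell^{1/5}$. The $2k_\ell$-th moment on the block is then $\ll s_\ell^{6/5+3\delta}+Q s_\ell^{-2k_\ell\delta}\ll Q^{1-4\delta}$, and Markov plus summation over the $O(\log Q)$ blocks finishes. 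The crucial point you were missing is that the dyadic localisation decouples the discretisation cost (governed by $s_\ell$) from the large-sieve length (governed by $s_\ell^{k_\ell}\approx Q^2$), so both can be kept small simultaneously.
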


\begin{proof} 
As before we set $Z=Q^{21/40}.$ We shall split the range of summation into dyadic parts. Let $L_1:= \lfloor \log Q/(1000\log 2)\rfloor$ and  $L_2:= \lfloor \log Z/\log 2\rfloor$. We put $s_{L_1}:= Q^{\delta}$, $s_{L_2+1}:= Z$, and we define $s_{\ell}:=2^{\ell}$ for $L_1+1\leq \ell\leq L_2$.
Let 
$$
k=k_{\ell}:=\begin{cases} 4 & \textup{ if } \sqrt{Q}\leq s_{\ell}\leq Z, \\ 
\lfloor 2\log Q/\log s_{\ell}\rfloor +1 & \textup{ if } Q^{\delta} \leq s_{\ell}< \sqrt{Q}.
\end{cases} 
$$
We observe that $4\leq k\leq 2001$ by our assumption on $s_{\ell}$, and that $s_{\ell}^k\geq Q^2$ for all $\ell$. Let $L_1\leq \ell\leq L_2$.  Using Lemma \ref{Lem:Reduction} with the choice $R=s_{\ell}^{1+\delta}$ together with the elementary inequality \begin{equation}\label{Eq:ElementaryInequality}
|a+b|^{2k}\leq 2^{2k}(|a|^{2k}+|b|^{2k})
\end{equation}
valid for all $a, b\in \mathbb{C}$, we obtain 
\begin{equation}\label{eqTail2}
\begin{aligned}
&\sum_{\chi \in \F} \max_{t\in [0,1]}\max_{\alpha\in [0,1)} \left|\sum_{s_{\ell}\leq n\leq s_{\ell+1}} \frac{\chi(n) a_ne(n\alpha)}{n^{1+it}}\right|^{2k} \\
& \ll \sum_{t\in \T}\sum_{\alpha \in \mathcal{A}}  \sum_{\chi \in \F}\left|\sum_{s_{\ell}^k\leq n\leq s_{\ell+1}^k} \frac{\chi(n)g_{s_{\ell}, s_{\ell+1}, k}(n, \alpha)}{n^{1+it}}\right|^{2}+ \frac{Q}{s_{\ell}^{2k\delta}}.\\
\end{aligned}
\end{equation}
Let $\eta=1/(10k)$. Using Lemma \ref{Lem:LargeSieve} together with the bound $d_k(n)\ll n^{\eta}$, which is valid since $k$ is bounded,  we obtain
\begin{equation}\label{eqTail20}
\begin{aligned}
\sum_{\chi\in \F}  \left|\sum_{s_{\ell}^k\leq n\leq s_{\ell+1}^k} \frac{\chi(n)g_{s_{\ell}, s_{\ell+1}, k}(n, \alpha)}{n^{1+it}}\right|^{2}&\leq \sum_{q\leq Q} \ \sums_{\chi\bmod q}  \left|\sum_{s_{\ell}^k\leq n\leq s_{\ell+1}^k} \frac{\chi(n)g_{s_{\ell}, s_{\ell+1}, k}(n, \alpha)}{n^{1+it}}\right|^{2}\\
& \ll s_{\ell}^{k} \sum_{s_{\ell}^k\leq n\leq s_{\ell+1}^k}\frac{|g_{s_{\ell},s_{\ell+1}, k}(n, \alpha)|^2}{n^2}\\
& \ll s_{\ell}^{k} \sum_{n> s_{\ell}^k}\frac{1}{n^{2-2\eta}} \ll s_{\ell}^{1/5},
\end{aligned}
\end{equation}
using that $|g_{s_{\ell},s_{\ell+1}, k} (n, \alpha)|^2 \leq d_k(n)^2\ll_{\eta} n^{2\eta}$ by \eqref{Eq:Bounbdgn}. 
Inserting this estimate in \eqref{eqTail2} we obtain
$$ 
\sum_{\chi\in \F}   \max_{t\in [0,1]}\max_{\alpha\in [0,1)} \left|\sum_{s_{\ell}\leq n\leq s_{\ell+1}} \frac{\chi(n)a_n e(n\alpha)}{n^{1+it}}\right|^{2k} \ll s_{\ell}^{6/5+3\delta} + \frac{Q}{s_{\ell}^{2k\delta}} \ll Q^{1-4\delta},
$$
since $s_{\ell}^{2k}\geq Q^{4}.$ 
Thus, we deduce that the number of cubic characters $\chi\in \F$  such that 
$$\max_{t\in [0,1]}\max_{\alpha\in [0,1)} \left|\sum_{s_{\ell}\leq n\leq s_{\ell+1}} \frac{\chi(n)a_n e(n\alpha)}{n^{1+it}}\right|\geq \frac{1}{(\log Q)^2}$$ 
is 
\begin{equation}\label{Eq:BoundNumberChiLarge}
\ll (\log Q)^{4k} \sum_{\chi\in \F} \max_{t\in [0,1]}\max_{\alpha\in [0,1)} \left|\sum_{s_{\ell}\leq n\leq s_{\ell+1}} \frac{\chi(n)a_ne(n\alpha)}{n^{1+it}}\right|^{2k} \ll Q^{1-2\delta}. 
\end{equation} 
We now observe that 
$$\max_{t\in [0,1]}\max_{\alpha\in [0,1)} \left|\sum_{Q^{\delta}\leq n\leq Z} \frac{\chi(n)a_n e(n\alpha)}{n^{1+it}}\right|\leq \sum_{L_1\leq \ell\leq L_2}\max_{t\in [0,1]}\max_{\alpha\in [0,1)} \left|\sum_{s_{\ell}\leq n\leq s_{\ell+1}} \frac{\chi(n)a_n e(n\alpha)}{n^{1+it}}\right|.$$
Therefore, if $\chi$ verifies \eqref{Eq:ConditionLargeTail} then there exists $\ell \in [L_1, L_2]$ such that 
$$\max_{t\in [0,1]}\max_{\alpha\in [0,1)} \left|\sum_{s_{\ell}\leq n\leq s_{\ell+1}} \frac{\chi(n)a_n e(n\alpha)}{n^{1+it}}\right|\geq \frac{1}{L_2\log Q} \geq \frac{1}{(\log Q)^2}.$$
Hence, it follows from \eqref{Eq:BoundNumberChiLarge} that the number of characters $\chi\in \F$ which verify $\eqref{Eq:ConditionLargeTail}$ is $\ll L_2 Q^{1-2\delta}\ll Q^{1-\delta}$, as desired.
\end{proof}
Next, we use the cubic large sieve of Lemma \ref{Lem:Orthogonality} to bound the large moments of a dyadic part of the sum \eqref{Eq:TailPolyaTwisted} in the range $2\leq N\leq Q^{\delta}$. In particular, this will be sufficient to show that the contribution of the intermediate range $\exp((\log_2 Q)^2)\leq n\leq Q^{\delta}$ in \eqref{Eq:TailPolyaTwisted} is small for all cubic characters $\chi\in\F$, except for a small exceptional set. 
\begin{lem}\label{Lem:TailMedium} Let $\delta=1/1000$. Let $\{a_n\}_{n\geq 1}$ be an arbitrary sequence of complex numbers such that $|a_n|\leq 1$ for all $n$.  Let $N_1, N_2$ be two real numbers such that  $2\leq N_1\leq N_2 \leq 2N_1\leq Q^{\delta}.$ Then, for all positive integers $100\leq k
\leq \log Q/(10 \log N_1)$ we have 
$$ 
\sum_{\chi\in \F} \max_{t\in [0, 1]}\max_{\alpha\in [0,1)} \left|\sum_{N_1\leq n\leq N_2} \frac{\chi(n)a_n e(n\alpha)}{n^{1+it}}\right|^{2k} 
 \ll Q N_1^4\exp\left(-\frac{2k\log N_1}{\log k} +O(k\log_2 k )\right).
$$
\end{lem}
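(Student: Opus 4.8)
The plan is to follow the same scheme as in Lemma~\ref{Lem:TailLarge}, but using the cubic large sieve of Lemma~\ref{Lem:Orthogonality} in place of the classical large sieve, since the range $2\leq N_1\leq N_2\leq Q^{\delta}$ is too short for Lemma~\ref{Lem:LargeSieve} to be effective. First I would apply Lemma~\ref{Lem:Reduction} with the set $\mathcal{X}=\F$, the sequence $\{a_n\}$, the endpoints $N_1,N_2$, and a choice of $R$ (say $R=N_2^{10}$, which is certainly at least $N_2$) to reduce bounding the $2k$-th moment of the maximum over $t\in[0,1]$ and $\alpha\in[0,1)$ to bounding
\[
\sum_{t\in\mathcal{T}}\sum_{\alpha\in\mathcal{A}}\sum_{\chi\in\F}\left|\sum_{N_1^k\leq n\leq N_2^k}\frac{\chi(n)g_{N_1,N_2,k}(n,\alpha)}{n^{1+it}}\right|^2
\]
plus the acceptable error term $O(|\F|N_2R^{-1})=O(QN_2^{-9})$ raised to the $2k$-th power; here $|\mathcal{A}|=R=N_2^{10}$ and $|\mathcal{T}|=T=R(\log N_2)^2/N_2\ll N_2^{9}(\log N_2)^2$, so there are only $\ll N_2^{19}(\log N_2)^2$ pairs $(t,\alpha)$, a polynomial-in-$N_1$ loss that will be absorbed at the end.

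Next, for each fixed $(t,\alpha)$ I would apply Lemma~\ref{Lem:Orthogonality} with $M=N_2^k\leq Q^{\delta k}$ and coefficients $a_m=g_{N_1,N_2,k}(m,\alpha)m^{-1-it}$ restricted to $N_1^k\leq m\leq N_2^k$ (so $|a_m|\leq d_k(m)/m$ by \eqref{Eq:Bounbdgn}). The main term of Lemma~\ref{Lem:Orthogonality} contributes
\[
\ll Q\sum_{\substack{N_1^k\leq m_1,m_2\leq N_2^k\\ m_1m_2^2\text{ is a cube}}}\frac{d_k(m_1)d_k(m_2)}{m_1m_2}\ll Q\,N_1^{-k}\!\!\sum_{\substack{m_1,m_2\geq N_1^k\\ m_1m_2^2\text{ cube}}}\frac{d_k(m_1)d_k(m_2)}{(m_1m_2)^{1-\eta}}
\]
for a small $\eta$; bounding the last sum via (a rough version of) Lemma~\ref{Lem:DivisorSums} and optimizing $\eta\asymp 1/\log k$ should give $\exp(-k\log N_1+O(k\log_2 k))$ per pair. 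The off-diagonal term of Lemma~\ref{Lem:Orthogonality} is $O\big(\sqrt{Q}(\log Q)M(\log M)^{3/2}(\sum_m d_k(m)/m)^2\big)$, and since $M=N_2^k\leq Q^{\delta k}$ with $k\leq \log Q/(10\log N_1)\leq \log Q/(10\log 2)$ and $d_k(m)/m$ summed is at most $(C\log N_2^k)^k$ which is $Q^{o(1)}$ in this regime, this is $\ll \sqrt{Q}\cdot Q^{\delta k+o(1)}\ll Q^{1-\delta}$ by our hypotheses on $k$ — comfortably smaller than the main term $QN_1^{-k}\cdots$ only once we account correctly for sizes, but in any case bounded by $QN_1^4\exp(-2k\log N_1/\log k+\cdots)$ after combining with the number of $(t,\alpha)$ pairs.

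Summing over the $\ll N_2^{19}(\log N_2)^2\ll N_1^{20}$ pairs $(t,\alpha)$ and recalling $N_2\leq 2N_1$, I obtain
\[
\sum_{\chi\in\F}\max_{t\in[0,1]}\max_{\alpha\in[0,1)}\left|\sum_{N_1\leq n\leq N_2}\frac{\chi(n)a_ne(n\alpha)}{n^{1+it}}\right|^{2k}\ll Q\,N_1^{4}\exp\!\left(-\frac{2k\log N_1}{\log k}+O(k\log_2 k)\right),
\]
after noting $\exp(-k\log N_1)$ from the diagonal term already beats $N_1^{20}$ by a wide margin once $k$ is large and $\log N_1/\log k$ replaces $\log N_1$ to be safe; the factor $N_1^4$ in the statement is a harmless slack. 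The main obstacle I anticipate is controlling the off-diagonal term of the cubic large sieve: one must check that with $k$ as large as $\log Q/(10\log N_1)$ the quantity $M(\log M)^{3/2}(\sum_m d_k(m)/m)^2=N_2^k\cdot(\text{something})^{2k}$ does not overwhelm the $\sqrt{Q}$ saving — this forces the precise hypotheses $N_2\leq Q^{\delta}$ and $k\leq\log Q/(10\log N_1)$, and verifying the bookkeeping here (rather than the diagonal estimate, which is a routine application of Lemma~\ref{Lem:DivisorSums}) is where care is needed.
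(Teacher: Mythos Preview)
Your overall strategy matches the paper's: apply Lemma~\ref{Lem:Reduction}, then the cubic large sieve Lemma~\ref{Lem:Orthogonality}, and bound the diagonal by Rankin's trick with parameter $\nu=1/\log k$ together with Lemma~\ref{Lem:DivisorSums}. However, two steps in your execution fail.

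The more serious one is the off-diagonal estimate. You bound $\sum_m |g_{N_1,N_2,k}(m,\alpha)|/m$ by $(Ck\log N_2)^k$ and assert this is $Q^{o(1)}$. It is not: when $k$ is near its maximum $\log Q/(10\log N_1)$ and $N_1$ is bounded, one has $(Ck\log N_2)^k=\exp\bigl(k\log(Ck\log N_2)\bigr)\geq \exp(c\,\log Q\,\log_2 Q)$, which is super-polynomial in $Q$ and overwhelms the $\sqrt Q$ saving. The point you are missing is the dyadic hypothesis $N_2\leq 2N_1$: by the very definition of $g$,
\[
\sum_{N_1^k\leq m\leq N_2^k}\frac{|g_{N_1,N_2,k}(m,\alpha)|}{m}\leq \Bigl(\sum_{N_1\leq n\leq N_2}\frac{1}{n}\Bigr)^{k}=e^{O(k)},
\]
with a small implied constant. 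Combined with $N_2^k\leq 2^kN_1^k\ll e^{O(k)}Q^{1/10}$ (from $k\leq \log Q/(10\log N_1)$), the off-diagonal is then $\ll e^{O(k)}N_1^4 Q^{3/4}$, which is dominated by $e^{O(k)}Q/N_1^{2k}$. The second issue is your choice $R=N_2^{10}$: it produces $\asymp N_1^{20}$ pairs $(t,\alpha)$, and the surplus $N_1^{16}$ cannot be absorbed into $O(k\log_2 k)$ (take $k=100$ and $N_1=Q^{\delta/2}$, so $16\log N_1\asymp\log Q$ while $k\log_2 k$ is bounded). The paper takes $R=N_1^2$, giving $|\mathcal{A}|\cdot|\mathcal{T}|\ll N_1^4$, exactly the prefactor in the statement. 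Finally, your Rankin step is miswritten: with $\eta=1/\log k$ one obtains $N_1^{-2k\eta}=\exp(-2k\log N_1/\log k)$, not $\exp(-k\log N_1)$.
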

\begin{proof}
Using \eqref{Eq:ElementaryInequality}, Lemma \ref{Lem:Reduction} with $R=N_1^2$ together with Lemma \ref{Lem:Orthogonality} we obtain
\begin{equation}\label{Eq:Tail3}
\begin{aligned}
&\sum_{\chi\in \F} \max_{t\in [0,1]}\max_{\alpha\in [0,1)} \left|\sum_{N_1\leq n\leq N_2} \frac{\chi(n)a_n e(n\alpha)}{n^{1+it}}\right|^{2k} \\
& \ll e^{O(k)}\sum_{t\in \T}\sum_{\alpha \in \mathcal{A}}  \sum_{\chi\in \F} \left|\sum_{N_1^k\leq n\leq N_2^k} \frac{\chi(n)g_{N_1, N_2, k}(n, \alpha)}{n^{1+it}}\right|^{2}+ \frac{Qe^{O(k)}}{N_1^{2k}}\\
& \ll QN_1^2 e^{O(k)} \sum_{\alpha \in \mathcal{A}}\sum_{\substack{N_1^k\leq n_1, n_2 \leq N_2^k\\ n_1n_2^2 \textup{ is a cube}}}\frac{|g_{N_1, N_2, k}(n_1, \alpha)g_{N_1, N_2, k}(n_2, \alpha)|}{n_1n_2}+ E_1\\
& \ll Q N_1^{4} e^{O(k)}  \sum_{\substack{ n_1, n_2 \geq N_1^k\\ n_1n_2^2 \textup{ is a cube}}} \frac{d_k(n_1)d_k(n_2)}{n_1n_2}+ E_1,
\end{aligned}
\end{equation}
where 
$$ E_1 \ll e^{O(k)}\frac{Q}{N_1^{2k}} + e^{O(k)} N_1^4 Q^{3/4}\left(\sum_{N_1^k\leq n\leq N_2^k}\frac{|g_{N_1,N_2, k}(n, \alpha)|}{n}\right)^2\ll e^{O(k)}\frac{Q}{N_1^{2k}},$$
since $N_1^k\ll Q^{1/10}$ by our assumption on $k$ and
$$ \sum_{N_1^k\leq n\leq N_2^k}\frac{|g_{N_1,N_2, k}(n, \alpha)|}{n} \leq \left(\sum_{N_1\leq n\leq N_2}\frac1n\right)^k=e^{O(k)}.$$ 
 To bound the sum over $n_1, n_2$, on the right hand side of \eqref{Eq:Tail3} we shall use Rankin's trick. Choosing $\nu=1/\log k$ and using Lemma \ref{Lem:DivisorSums} we obtain
\begin{align*}
    \sum_{\substack{ n_1, n_2 \geq N_1^k\\ n_1n_2^2 \textup{ is a cube}}} \frac{d_k(n_1)d_k(n_2)}{n_1n_2} 
    &\leq  N_1^{-2k\nu}  \sum_{\substack{ n_1, n_2 \geq 1\\ n_1n_2^2 \textup{ is a cube}}} \frac{d_k(n_1)d_k(n_2)}{(n_1n_2)^{1-\nu}}\\
    &\ll \exp\left(-\frac{2k\log N_1}{\log k} +O(k\log_2 k)\right).
\end{align*}
Inserting this estimate in \eqref{Eq:Tail3} completes the proof.  
\end{proof}
Our last ingredient of the proof of Theorem \ref{Thm:TailPolya} is the following result, which bounds certain specific moments, depending on $y$, of the remaining part of the sum in \eqref{Eq:TailPolyaTwisted} over small integers $1\leq n\leq \exp((\log_2 Q)^2)$, in the special case where $a_n=h(n)$ if $P^{+}(n)>y$, and equals $0$ otherwise.
\begin{pro}\label{Pro:TailUnsmooth} Let $h$ be a completely multiplicative function such that $|h(n)|\leq 1$ for all integers $n\geq 1$. Let  
$X= \exp((\log_2 Q)^2)$. There exist  positive constants $C_9, C_{10}>1$ such that for any positive integer $C_9\leq k\leq \log Q/(10\log X)$ and real number $(k\log k)/10\leq y\leq k(\log k)^{10}$ we have 
$$
\sum_{\chi\in \F} \max_{t\in [0,1]}\max_{\alpha\in [0,1)} \Bigg|\sum_{\substack{1\leq n\leq X\\ P^{+}(n)> y}} \frac{\chi(n)h(n) e(n\alpha)}{n^{1+it}}\Bigg|^{2k} \ll Q \left(\frac{C_{10}k\log y}{y}\right)^{k}.
$$
\end{pro}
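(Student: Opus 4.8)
The plan is to prove the estimate by a dyadic decomposition of the range $1\le n\le X$, running on each dyadic block the argument of Lemma~\ref{Lem:TailMedium} — reduction to a discrete set of pairs $(t,\alpha)$ via Lemma~\ref{Lem:Reduction}, followed by the cubic large sieve of Lemma~\ref{Lem:Orthogonality} — while extracting from the hypothesis $P^{+}(n)>y$ an additional saving coming from Lemma~\ref{Lem:DivisorRoughBig1}.

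\textbf{Step 1 (dyadic reduction).} First I would write $[1,X]=\bigcup_j[2^j,2^{j+1})$; since $P^{+}(n)>y$ forces $n>y$, only the indices $j_0:=\lceil(\log y)/\log 2\rceil\le j\le J:=\lfloor(\log X)/\log 2\rfloor$ matter. Writing $B_{j,\chi}(\alpha,t)$ for the portion of the sum with $n$ in the $j$-th block, the triangle inequality inside $\max_{t,\alpha}$ followed by Minkowski's inequality gives
\[
\Bigl(\sum_{\chi\in\F}\max_{t\in[0,1]}\max_{\alpha\in[0,1)}\Bigl|\sum_{\substack{1\le n\le X\\ P^{+}(n)>y}}\frac{\chi(n)h(n)e(n\alpha)}{n^{1+it}}\Bigr|^{2k}\Bigr)^{1/2k}\le\sum_{j=j_0}^{J}\Bigl(\sum_{\chi\in\F}\max_{t}\max_{\alpha}|B_{j,\chi}(\alpha,t)|^{2k}\Bigr)^{1/2k}.
\]
For a single block I would apply Lemma~\ref{Lem:Reduction} with $N_1=2^j$, $N_2=2^{j+1}$ and $R=2^{2j}$ — discretising $(t,\alpha)$ to a set of cardinality $|\mathcal{T}_j||\mathcal{A}_j|\ll 2^{3j}j^2$, the truncation error being harmless since it contributes $\ll Q\,2^{-2kj}$ to the $2k$-th moment and $2^{j}\gg y$ — and then the cubic large sieve (Lemma~\ref{Lem:Orthogonality}) to each Dirichlet polynomial $\sum_{(2^j)^k\le m<(2^{j+1})^k}\chi(m)g_j(m,\alpha)/m^{1+it}$, where $|g_j(m,\alpha)|\le\rho_k^{(j)}(m):=\#\{(n_1,\dots,n_k):n_1\cdots n_k=m,\ 2^j\le n_i<2^{j+1},\ P^{+}(n_i)>y\}\le d_k(m)$. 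The off-diagonal term of Lemma~\ref{Lem:Orthogonality} I would bound trivially using $M=(2^{j+1})^k\le X^k\le Q^{1/10}$ and $\sum_m\rho_k^{(j)}(m)/m\le(\log 2+o(1))^k=e^{O(k)}$; it contributes $\ll Q^{3/5+o(1)}$ in total, negligible against the eventual main term of size $\gg Q^{1-o(1)}$.

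\textbf{Step 2 (the diagonal term).} The main contribution per grid point is $Q\sum_{m_1m_2^2\ \textup{cube}}\rho_k^{(j)}(m_1)\rho_k^{(j)}(m_2)/(m_1m_2)$ with $m_1,m_2\in[(2^j)^k,(2^{j+1})^k)$. I would (i) drop the dyadic restriction inside $\rho_k^{(j)}$ but keep $P^{+}(n_i)>y$, so $\rho_k^{(j)}(m)\le\rho_k(m):=\#\{(n_i):n_1\cdots n_k=m,\ P^{+}(n_i)>y\}$; (ii) retain only $m_i\ge 2^{jk}$ and use Rankin's trick with the small shift $\nu:=1/\log y$, which produces the factor $2^{-2jk\nu}$; and (iii) factor every $m$ uniquely as $m=UV$ with $U$ its $y$-smooth part and $V$ its $y$-rough part. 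Since $P^{+}(n_i)>y$ forces the rough part of each $n_i$ to exceed $1$, one has $\rho_k(m)=\sum_{m=UV}d_k(U)\tau_k(V)$ with $\tau_k(V)=\#\{(v_i):v_1\cdots v_k=V,\ v_i>1\}\le d_k(V)$, and $m_1m_2^2$ is a cube iff $U_1U_2^2$ and $V_1V_2^2$ both are (coprime supports), whence
\[
\sum_{m_1m_2^2\ \textup{cube}}\frac{\rho_k(m_1)\rho_k(m_2)}{(m_1m_2)^{1-\nu}}\le\Bigl(\prod_{p\le y}\ex\bigl(|1-\X(p)/p^{1-\nu}|^{-2k}\bigr)\Bigr)\sum_{\substack{V_1,V_2>1,\ P^{-}(V_1V_2)>y\\ V_1V_2^2\ \textup{cube}}}\frac{d_k(V_1)d_k(V_2)}{(V_1V_2)^{1-\nu}}.
\]
The first factor is $\ll(\log k)^{2k}e^{O(k)}$ (by the argument proving Lemma~\ref{Lem:DivisorSums}: only the primes $p\le 10k$ contribute meaningfully), and the second is the $\nu$-shifted analogue of $\mathcal{D}_y(k)$, hence $\ll(C_8k/(y\log y))^k$ by Lemma~\ref{Lem:DivisorRoughBig1} (whose proof, via the identity \eqref{Eq:IdentityDivisorRandom} and the random model, goes through mutatis mutandis for the shift $\nu\le 1/\log y$, since $\sum_{p>y}p^{-2+2\nu}\asymp\sum_{p>y}p^{-2}$). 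Because $\log y\asymp\log k$ throughout the admissible range, $(\log k)^2/\log y\asymp\log y$, so the diagonal term per grid point is $\ll Q\,2^{-2jk\nu}\,(C^{*}k\log y/y)^{k}$.

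\textbf{Step 3 (assembly, and the main obstacle).} Combining Steps~1--2, $\sum_{\chi\in\F}\max_{t}\max_{\alpha}|B_{j,\chi}|^{2k}\ll e^{O(k)}\,2^{3j}j^2\,Q\,2^{-2jk\nu}(C^{*}k\log y/y)^{k}+Q^{3/5+o(1)}$. Since $k\ge C_9$ is large and $\log y\le 11\log k$, we have $\nu=1/\log y>3/(2k)$, so after extracting $2k$-th roots the geometric factor $2^{j(3/(2k)-\nu)}$ decays and $\sum_{j\ge j_0}$ is controlled by the term $j=j_0$, where $2^{j_0}\asymp y$ and $y^{\nu}=e$; this gives a bound $\ll Q\,y^{3}(\log y)^{2}\,(C''k\log y/y)^{k}+Q^{3/5+o(1)}$, and the polynomial prefactor $y^{3}(\log y)^{2}\ll k^{3}(\log k)^{32}$ is absorbed into the exponential gain $(C_{10}/C'')^{k}$ once $C_9$ is large, completing the proof. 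The crux of the argument — and the reason for the dyadic decomposition — is precisely the control of $\max_{\alpha}$: treating $[1,X]$ in a single step via Lemma~\ref{Lem:Reduction} and the large sieve would cost a factor equal to the size of the $\alpha$-grid, namely $\gg X=\exp((\log_2 Q)^2)$, which cannot be recovered; one must exploit the decay $1/n$ of the coefficients. Dyadic splitting turns this loss into $\sum_j 2^{O(j)}$, and Rankin's trick (available because the $k$-fold product over a block lives above $2^{jk}$) supplies the compensating factor $2^{-2jk\nu}$. Checking that this wins, and that the residual polynomial-in-$k$-and-$y$ losses are swallowed by the exponential-in-$k$ saving furnished by $P^{+}(n)>y$ through Lemma~\ref{Lem:DivisorRoughBig1}, is the delicate part of the bookkeeping.
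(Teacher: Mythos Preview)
Your overall strategy — dyadic decomposition in $n$, Lemma~\ref{Lem:Reduction} on each block, then Lemma~\ref{Lem:Orthogonality}, with the saving from $P^{+}(n)>y$ extracted via the smooth/rough factorisation of the resulting divisor sum — is sound, and your Step~2 factorisation (splitting $m=UV$ and applying Lemmas~\ref{Lem:DivisorSums} and \ref{Lem:DivisorRoughBig1}) is correct. However, there is a quantitative gap in Step~3 that prevents you from reaching the stated bound.

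You claim that ``$\sum_{j\ge j_0}$ is controlled by the term $j=j_0$'', but this is not quite right. After taking $2k$-th roots, the $j$-dependence is $2^{j(3/(2k)-\nu)}$; since $\nu-3/(2k)\asymp\nu=1/\log y$, the geometric series $\sum_{j\ge j_0}2^{-(j-j_0)(\nu-3/(2k))}$ is $\asymp(\nu-3/(2k))^{-1}\asymp\log y$, not $O(1)$. Upon raising back to the $2k$-th power this contributes an extra factor $(\log y)^{2k}$, so your method actually yields
\[
\sum_{\chi\in\F}\max_{t,\alpha}\bigl|\cdots\bigr|^{2k}\ll Q\,y^{3}(\log y)^{O(1)}\Bigl(\frac{C'' k(\log y)^{3}}{y}\Bigr)^{k},
\]
which is weaker than the target by a factor $(\log y)^{2k}$ that cannot be absorbed into $C_{10}^{k}$. (The polynomial prefactor $y^{3}(\log y)^{O(1)}$ can be absorbed, as you say, but the loss inside the $k$-th power cannot.)

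The paper avoids this double loss by a different decomposition. It first applies a H\"older-type inequality (equation~\eqref{Eq:HolderSmooth}, imported from \cite{La22}) that writes each $n$ with $P^{+}(n)>y$ as $n=am$ with $a$ $y$-smooth and $P^{-}(m)>y$, thereby converting the problem to bounding moments of the $y$-\emph{rough} sums $S_{\chi}(y,z)$ at the cost of a single factor $(\log y)^{2k-1}$. It then splits $S_{\chi}(y,z)$ at $N=\exp((\log k)^{2})$: the short range $1<n\le N$ is handled in one piece by Lemma~\ref{Lem:Reduction}+Lemma~\ref{Lem:Orthogonality}+Lemma~\ref{Lem:DivisorRoughBig1} (the grid of size $\exp(O((\log k)^{2}))$ is harmless since its $2k$-th root tends to $1$), while the range $N<n\le z$ is dispatched dyadically via Lemma~\ref{Lem:TailMedium} and contributes $\ll Q^{1/(2k)}k^{-1/5}$, negligibly. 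The point is that the smooth cost $(\log y)^{2k}$ is paid exactly once (through H\"older), whereas in your argument it is effectively paid twice --- once through the smooth divisor sum in Step~2 and once through the Minkowski sum over blocks in Step~3.
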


\begin{proof}
We will follow the proof of Proposition 4.5 of \cite{La22}, but there are certain simplifications along the way, since we are not aiming to optimize the constant $C_{10}$ in the upper bound. First, by a slight variation of  \cite[Eq. (4.10)]{La22} (where we add the twist by $n^{it}$) we have 
\begin{equation}\label{Eq:HolderSmooth}
\begin{aligned}
&\sum_{\chi\in \F} \max_{t\in [0,1]}\max_{\alpha\in [0,1)} \Bigg|\sum_{\substack{1\leq n\leq X\\ P^{+}(n)> y}} \frac{\chi(n) h(n)e(n\alpha)}{n^{1+it}}\Bigg|^{2k}\\
& \quad\quad \ll e^{O(k)}\left(\log y \right)^{2k-1}\sum_{\substack{1\leq a\leq X\\ P^{+}(a)\leq  y}}\frac{1}{a} \sum_{\chi\in \F}S_{\chi}(y, X/a)^{2k},
 \end{aligned}
\end{equation}
where 
\begin{equation}\label{Eq:DefinitionSchi}
S_{\chi}(y, z)= \max_{t\in [0,1]}\max_{\alpha \in [0, 1)}\Bigg|\sum_{\substack{1<n\leq z\\ P^{-}(n)> y}} \frac{\chi(n) h(n) e(n\alpha)}{n^{1+it}}\Bigg|.
\end{equation}
Therefore, it suffices to bound the moments 
$$ \sum_{\chi\in \F}S_{\chi}(y, z)^{2k}, $$
uniformly in $2\leq y<z\leq X$. Using Minkowski's inequality, we have 
\begin{equation}\label{Eq:MinkUnsmooth}
\begin{aligned}
\left(\sum_{\chi\in \F}S_{\chi}(y, z)^{2k}\right)^{1/2k} 
& \leq \Bigg(\sum_{\chi\in \F}\max_{t\in [0,1]}\max_{\alpha\in [0,1)} \Bigg|\sum_{\substack{1<n\leq N\\ P^{-}(n)> y}} \frac{\chi(n) h(n) e(n\alpha)}{n^{1+it}}\Bigg|^{2k}\Bigg)^{1/2k}\\
& \quad \quad + \Bigg(\sum_{\chi\in \F}\max_{t\in [0,1]}\max_{\alpha\in [0,1)} \Bigg|\sum_{\substack{N<n\leq z\\ P^{-}(n)> y}} \frac{\chi(n) h(n) e(n\alpha)}{n^{1+it}}\Bigg|^{2k}\Bigg)^{1/2k},
\end{aligned}
\end{equation}
where $
N:= \min\big(z, \exp((\log k)^2)\big).
$
Note that the second part will be empty unless $z>\exp\big((\log k )^2\big)$.
We start by bounding the first sum. By Lemma \ref{Lem:Reduction} with the choices $R:=\exp\big(2(\log k )^2\big)$ and 
\begin{equation}\label{Eq:Choicea_n}
a_n=\begin{cases}h(n) & \text{ if } P^-(n)>y,\\
0 &\text{ otherwise, }\end{cases}    
\end{equation}
 we obtain
\begin{equation}\label{Eq:Reduction2}
\begin{aligned}
&\Bigg(\sum_{\chi\in \F} \max_{t\in [0,1]}\max_{\alpha\in [0,1)} \Bigg|\sum_{\substack{1< n\leq N\\ P^{-}(n)>y}} \frac{\chi(n) h(n)e(n\alpha)}{n^{1+it}}\Bigg|^{2k}\Bigg)^{1/2k} \\
& \leq \Bigg(\sum_{t\in\T}\sum_{\alpha \in \mathcal{A}}  \sum_{\chi\in \F} \Bigg|\sum_{\substack{1< n\leq N^{k}\\ P^{-}(n)>y}}\frac{\chi(n)g_{2, N, k}(n, \alpha)}{n^{1+it}}\Bigg|^{2}\Bigg)^{1/2k}+ O\left(Q^{1/2k}R^{-{1/2}}\right).
\end{aligned}
\end{equation}
 We now proceed similarly to the proof of \eqref{Eq:Tail3}. Using Lemma \ref{Lem:Orthogonality} and noting that $N^k\leq Q^{1/10}$  we derive
\begin{equation}\label{Eq:TailDivisorSum0}
\begin{aligned}\sum_{\chi \in \F} \Bigg|\sum_{\substack{1< n\leq N^{k}\\ P^{-}(n)>y}}\frac{\chi(n)g_{2, N, k}(n, \alpha)}{n^{1+it}}\Bigg|^{2} &\ll Q\sum_{\substack{1<n_1, n_2 \leq N^{k}\\ P^{-}(n_1n_2)> y\\ n_1n_2^2 \textup{ is a cube}}}
\frac{d_k(n_1)d_k(n_2)}{n_1n_2} \\
& \quad \quad +O\left(Q^{2/3}\left(\sum_{n\leq N^k}\frac{|g_{2, N, k}(n, \alpha)|}{n}\right)^2\right)\\
&\ll Q\left(\frac{C_8k}{y\log y}\right)^{k},
\end{aligned}
\end{equation}
where the last inequality follows from Lemma \ref{Lem:DivisorRoughBig1} and the bound
$$\sum_{n\leq N^k}\frac{|g_{2, N, k}(n, \alpha)|}{n}\leq \left(\sum_{n\leq N}\frac{1}{n}\right)^k\ll (2\log N)^k=Q^{o(1)} $$
by our assumption on $k$.
Inserting these estimates in \eqref{Eq:Reduction2} implies that 
\begin{equation}\label{Eq:FirstTermHolder}
 \Bigg(\sum_{\chi\in \F} \max_{t\in [0,1]}\max_{\alpha\in [0,1)} \Bigg|\sum_{\substack{1< n\leq N\\ P^{-}(n)>y}} \frac{\chi(n) h(n) e(n\alpha)}{n^{1+it}}\Bigg|^{2k}\Bigg)^{1/2k} \quad \ll Q^{1/(2k)} \left(\frac{k}{y\log y}\right)^{1/2}.
\end{equation}

We now assume that $z>\exp\big((\log k)^2\big)$ and bound the second term on the right-hand side of \eqref{Eq:MinkUnsmooth}. We shall
split the inner sum over $n$ into dyadic intervals. Let $J_1= \lfloor \log N/\log 2\rfloor, J_2= \lfloor \log z/\log 2\rfloor$, and define $t_{J_1}= N$, $t_{J_2+1}= z$, and $t_j:= 2^j$ for $J_1+1\leq j\leq J_2$. Using Minkowski's inequality and Lemma \ref{Lem:TailMedium} with the same choice of $a_n$ as in \eqref{Eq:Choicea_n}
we derive
\begin{align*}
&\Bigg(\sum_{\chi\in \F}\max_{t\in [0,1]}\max_{\alpha\in [0,1)} \Bigg|\sum_{\substack{N<n\leq z\\ P^{-}(n)> y}} \frac{\chi(n) h(n) e(n\alpha)}{n^{1+it}}\Bigg|^{2k}\Bigg)^{1/2k}\\
& \leq \sum_{J_1\leq j\leq  J_2}\Bigg(\sum_{\chi\in \F}\max_{t\in [0,1]}\max_{\alpha\in [0,1)} \Bigg|\sum_{t_j<n\leq t_{j+1}} \frac{\chi(n) a_n e(n\alpha)}{n^{1+it}}\Bigg|^{2k}\Bigg)^{1/2k}\\
& \ll Q^{1/2k}\sum_{J_1\leq j\leq  J_2}  \exp\left(-\frac{j}{4\log k}\right)\ll Q^{1/2k} (\log k)\exp\left(-\frac{\log N}{4\log k} \right) \ll \frac{Q^{1/2k}}{k^{1/5}}.
\end{align*}
 Combining the above estimate with \eqref{Eq:MinkUnsmooth} and \eqref{Eq:FirstTermHolder} we obtain 
$$ \left(\sum_{\chi\in \F}S_{\chi}(y, z)^{2k}\right)^{1/2k} \ll Q^{1/2k}  \left(\frac{k}{y\log y}\right)^{1/2},$$
uniformly for $y<z\leq X$. Inserting this bound in \eqref{Eq:HolderSmooth} completes the proof. 
 \end{proof}

 We now complete the proof of Theorem \ref{Thm:TailPolya} combining Lemma \ref{Lem:TailLarge}, Lemma \ref{Lem:TailMedium}, and Proposition \ref{Pro:TailUnsmooth}.
 
 \begin{proof}[Proof of Theorem \ref{Thm:TailPolya}] Recall that $X= \exp((\log_2Q)^2)$ and $Z=Q^{21/40}$. Let $\delta=1/1000$. First by Lemma \ref{Lem:TailLarge} with the choice 
 \begin{equation}\label{Eq:SecondChoicea_n}
a_n=\begin{cases}h(n) & \text{ if } P^+(n)>y,\\
0 &\text{ otherwise, }\end{cases}    
\end{equation}
 we have 
\begin{equation}\label{Eq:BoundTailLarge}
\max_{t\in [0,1]}\max_{\alpha\in [0, 1)}\left|\sum_{\substack{Q^{\delta}\leq n\leq  Z\\ P^+(n)>y}} \frac{\chi(n) h(n)e(n\alpha)}{n^{1+it}}\right|\leq \frac{1}{\log Q},
 \end{equation}
 for all cubic characters $\chi\in \F$ except for a set of size $O(Q^{1-\delta}).$
 
 We now handle the remaining range $1\leq n\leq Q^{\delta},$ which we will split further in several pieces. As in the proof of Lemma \ref{Lem:TailLarge} we let  $L_1= \lfloor \log Q/(1000\log 2)\rfloor$, and put $L_0:= \lfloor \log X/\log 2\rfloor$. We also define $s_{L_0}:= X$, $s_{L_1+1}:= Q^{\delta}$, and we put $s_{\ell}:=2^{\ell}$ for $L_0+1\leq \ell\leq L_1$.
Let $\ell \in [L_1, L_2]$ and put $k=\lfloor\log Q /(10 \log s_{\ell})\rfloor$. Then using Lemma \ref{Lem:TailMedium} with the same choice of $a_n$ as in \eqref{Eq:SecondChoicea_n}, we obtain that the number of characters $\chi \in \F$ such that 
$$\max_{t\in [0,1]}\max_{\alpha\in [0,1)} \Bigg|\sum_{\substack{s_{\ell}\leq n\leq  s_{\ell+1}\\ P^+(n)>y}} \frac{\chi(n) h(n) e(n\alpha)}{n^{1+it}}\Bigg| \geq \frac{1}{\ell^2}$$ 
is 
\begin{align*}
& \leq \ell^{4k} \sum_{\chi\in \F} \max_{t\in [0,1]}\max_{\alpha\in [0,1)} \Bigg|\sum_{\substack{s_{\ell}\leq n\leq  s_{\ell+1}\\ P^+(n)>y}} \frac{\chi(n) h(n) e(n\alpha)}{n^{1+it}}\Bigg|^{2k} \\
&
\ll  2^{4\ell}Q\exp\left(-\frac{2k\log s_{\ell}}{\log k} + O(k\log_2 k +k \log \ell)\right) 
\ll Q  \exp\left(-\frac{\log Q}{20 \log_2 Q}\right).
\end{align*}
This implies that
\begin{equation}\label{Eq:BoundTailMedium}
\sum_{L_1\leq \ell \leq L_2} \max_{t\in [0,1]}\max_{\alpha \in [0, 1)} \Bigg|\sum_{\substack{s_{\ell}\leq n\leq  s_{\ell+1}\\ P^+(n)>y}} \frac{\chi(n) h(n) e(n\alpha)}{n^{1+it}}\Bigg| 
\ll  
\sum_{L_1\leq \ell \leq L_2} \frac{1}{\ell^2} \ll \frac{1}{(\log_2 Q)^2},
\end{equation}
for all  characters $\chi\in \F$ except for a set of size $Q  \exp\left(-\log Q/(30 \log_2 Q)\right)$. 

Let $J=\lfloor y/(20C_{10} \log y)\rfloor$, where $C_{10}$ is the constant in the statement of Proposition  \ref{Pro:TailUnsmooth}. Then it follows from this result that the number of characters $\chi\in \F$ such that 
$$ \max_{t\in [0,1]}\max_{\alpha \in [0, 1)} \Bigg|\sum_{\substack{1\leq n\leq X\\ P^{+}(n)> y}} \frac{\chi(n) h(n) e(n\alpha)}{n^{1+it}}\Bigg| > \frac12,$$
is 
$$\leq 2^{2J}\sum_{\chi\in \F}\max_{\alpha \in [0, 1)} \Bigg|\sum_{\substack{1\leq n\leq X\\ P^{+}(n)> y}} \frac{\chi(n) h(n) e(n\alpha)}{n^{1+it}}\Bigg|^{2J}\ll Q \left(\frac{4C_{10}J\log y}{y}\right)^{J}\ll Qe^{-J}.
$$
Combining this estimate with  \eqref{Eq:BoundTailLarge} and \eqref{Eq:BoundTailMedium} and noting that 
\begin{align*}
\max_{t\in [0,1]}\max_{\alpha \in [0, 1)} \Bigg|\sum_{\substack{1\leq n\leq Z\\P^+(n)>y}} \frac{\chi(n) h(n) e(n\alpha)}{n^{1+it}}\Bigg| 
&  \leq  \max_{t\in [0,1]}\max_{\alpha \in [0, 1)} \Bigg|\sum_{\substack{1\leq n\leq X\\ P^{+}(n)> y}} \frac{\chi(n) h(n) e(n\alpha)}{n^{1+it}}\Bigg| \\
& + \sum_{L_0\leq \ell \leq L_1} \max_{t\in [0,1]}\max_{\alpha \in [0, 1)} \Bigg|\sum_{\substack{s_{\ell}\leq n\leq s_{\ell+1}\\ P^+(n)>y}} \frac{\chi(n) h(n) e(n\alpha)}{n^{1+it}}\Bigg|\\
&+\max_{t\in [0,1]}\max_{\alpha \in [0, 1)} \Bigg|\sum_{\substack{Q^{\delta}\leq n\leq Z\\P^+(n)>y}} \frac{\chi(n) h(n) e(n\alpha)}{n^{1+it}}\Bigg|
\end{align*}
completes the proof.  
\end{proof}


\section{Distribution of $L(1, \chi\overline{\psi})$: Proof of Theorem \ref{Thm: L(1, chipsi)}} \label{Sec: lower}

To prove Theorem \ref{Thm: L(1, chipsi)} we will compute the moments $|L(1, \chi\overline{\psi})|^{2r}$ as $\chi$ varies in $\mathcal{F}_3(Q)$. Using the results of \cite{DDLL23} we shall prove that these moments are close to the corresponding moments of a certain random Euler product. Let $\{\mathbb{Y}(p)\}_{p\:\text{prime}}$ be independent random variables,  defined by 
$$ \Y(p)=\begin{cases} 1 & \text{ with probability } 1/3,\\
\omega_3 & \text{ with probability } 1/3,\\
\omega_3^2 & \text{ with probability } 1/3,\\
\end{cases}
$$
when $p=3$ or $p\equiv 2\pmod 3$, and by 
$$ \Y(p)=\begin{cases} 0 & \text{ with probability } 2/(p+2),\\
1 & \text{ with probability } p/(3(p+2)),\\
\omega_3 & \text{ with probability } p/(3(p+2)),\\
\omega_3^2 & \text{ with probability } p/(3(p+2)),\\
\end{cases}
$$
 when $p\equiv 1\pmod 3$. Then, for any Dirichlet character $\psi$ modulo $m$, we define
\begin{align}\label{Def: L(1, ypsi)}
    L(1, \mathbb{Y}\overline{\psi}):=\prod_{p\: \text{prime}}\bigg(1-\dfrac{\mathbb{Y}(p)\overline{\psi(p)}}{p}\bigg)^{-1}.
\end{align}
We observe that this random Euler product converges almost surely by Kolmogorov's two-series Theorem since $\ex(\Y(p))=0$ for all primes $p$.

Our first result, which is a minor modification of \cite[Theorem 3.1]{DDLL23}, compares the moments $|L(1, \chi\overline{\psi})|^{2r}$ for $\chi\in \mathcal{F}_3(Q)$ to the expectations $\ex (|L(1, \Y\overline{\psi})|^{2r})$.

\begin{pro}\label{Pro:CubicL-functions}
    Let $Q$ be large. Let $\psi$ be a primitive Dirichlet character modulo a non-exceptional modulus $m$ with $m\leq \log Q$. Then uniformly for all real numbers $r$ in the range $|r|\leq (\log Q)/(16 \log_2 Q\log _3 Q)$ we have 
\begin{align*}
        \dfrac{1}{\# \mathcal{F}_3(Q)}\sum_{\chi \in \mathcal{F}_3(Q)}|L(1, \chi \overline{\psi})|^{2r} = \ex (|L(1, \Y \overline{\psi})|^{2r}) + O\bigg(\exp \bigg(-\dfrac{\log Q}{16\log_2 Q}\bigg)\bigg).
    \end{align*}
\end{pro}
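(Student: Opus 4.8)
The plan is to follow the moment-comparison strategy of \cite{DDLL23}, adapting it to the slightly larger uniformity range $|r|\leq (\log Q)/(16\log_2 Q\log_3 Q)$. First I would truncate the Euler product: for a suitable parameter $z=z(Q)$ (something like $z=(\log Q)^{A}$ or $z=Q^{c/r}$, chosen so that $z^{2r}\leq Q^{1/8}$ or so) I would replace $L(1,\chi\overline{\psi})$ by a short Dirichlet polynomial $L_z(1,\chi\overline{\psi})=\sum_{P^+(n)\leq z}\chi(n)\overline{\psi(n)}a(n)/n$, where the $a(n)$ are the coefficients coming from $\prod_{p\leq z}(1-\chi(p)\overline{\psi(p)}/p)^{-1}$ (equivalently a smoothed version thereof), and similarly replace $L(1,\Y\overline{\psi})$ by its $z$-smooth analogue. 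The tail estimate showing $|L(1,\chi\overline\psi)-L_z|$ is small on average (or, more precisely, that the difference of $2r$-th moments is negligible) uses the non-exceptionality of $m$: this is exactly where one needs a zero-free region for $L(s,\chi\overline\psi)$ to control $\log L(1,\chi\overline\psi)$ by a short prime sum, and it is here that the hypothesis "$m$ non-exceptional" enters. The analogous tail bound on the random side is a routine computation with $\ex(\Y(p))=0$ and Kolmogorov/Borel--Cantelli-type estimates, as in Lemma \ref{Lem:DivisorSums}.

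Next I would expand $|L_z(1,\chi\overline\psi)|^{2r}$ (for $r$ a positive integer; general real $r$ is handled by the standard trick of writing $|L_z|^{2r}=L_z^{r}\overline{L_z}^{r}$ after first reducing to integer $r$ via Hölder, or by the Rankin-type interpolation used in \cite{DDLL23}) as a Dirichlet polynomial $\sum_{n\leq N}b_r(n)\chi(n)/n$ with $N\leq z^{2r}$ and $|b_r(n)|\leq d_{2r}(n)$ or so. Summing over $\chi\in\F$ and invoking the orthogonality relation from Lemma \ref{Lem:Orthogonality}, the diagonal terms—those $n=n_1n_2^2$ with $n_1 n_2^2$ a cube, which by \eqref{Eq:OrthogonalityRandom} match precisely the terms surviving in $\ex(|L_z(1,\Y\overline\psi)|^{2r})$—reproduce the random Euler product moment, while the off-diagonal contribution is bounded by $\sqrt{Q}(\log Q) N (\log N)^{3/2}(\sum_{n\leq N}|b_r(n)|/n)^2$. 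Here one uses $\sum_{n\leq N}|b_r(n)|/n\leq \prod_{p\leq z}(1-1/p)^{-2r}\ll (\log z)^{2r}=\exp(O(r\log_2 Q))$ and $N\leq z^{2r}$; the constraint $r\leq (\log Q)/(16\log_2 Q\log_3 Q)$ is exactly what makes $N(\log N)^{3/2}\exp(O(r\log_2 Q))\ll Q^{1/2-\delta}$, so that the off-diagonal term is $\ll \exp(-\log Q/(16\log_2 Q))$ as claimed. Finally I would check that replacing $\ex(|L_z(1,\Y\overline\psi)|^{2r})$ by $\ex(|L(1,\Y\overline\psi)|^{2r})$ costs only an admissible error, using independence of the $\Y(p)$ and the fast convergence of the Euler product for $p>z$.

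The main obstacle—and the step requiring the most care—is making the truncation parameter $z$ and the degree $2r$ fit simultaneously inside the large-sieve budget of Lemma \ref{Lem:Orthogonality}: one needs $z$ large enough that the tail $|L-L_z|$ is negligible (which, via the zero-free region, forces $z$ to be at least a fixed power of $\log Q$, and larger if $r$ grows), yet $z^{2r}$ small enough that the off-diagonal error in Lemma \ref{Lem:Orthogonality} beats $Q$. Balancing these two competing requirements is what pins down the precise range $|r|\leq (\log Q)/(16\log_2 Q\log_3 Q)$ and the error term $\exp(-\log Q/(16\log_2 Q))$, and it is the only place where the argument is genuinely delicate; the rest is bookkeeping following \cite{DDLL23} closely, with the cubic orthogonality relation \eqref{Eq:OrthogonalityRandom} ensuring that the random model $\Y$ is the correct one.
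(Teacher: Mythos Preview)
Your proposal is correct and follows essentially the same route as the paper, which simply records that the result is a direct adaptation of Theorem 3.1 of \cite{DDLL23}: the conductor of $\chi\overline{\psi}$ is at most $Q\log Q$, so Lemma 3.2 of \cite{DDLL23} (the short-Euler-product approximation via the zero-free region, which is precisely where the non-exceptionality of $m$ is used) applies with $\ep=1/6$, and the rest of the argument goes through unchanged since $|\psi(n)|\leq 1$. Your sketch spells out in more detail the truncation, the expansion of the $2r$-th moment, and the diagonal/off-diagonal split via Lemma~\ref{Lem:Orthogonality} (equivalently \cite[Lemma 3.5]{DDLL23}); one small point is that for real $r$ the standard treatment in \cite{DDLL23} uses the generalized divisor coefficients $d_r(n)$ directly rather than reducing to integer $r$ via H\"older, but this does not affect the outcome.
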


\begin{proof}
The proof is an easy adaptation of the proof of Theorem 3.1 of \cite{DDLL23}. Indeed, this is the case since the conductor of $\chi \overline{\psi}$ is $\leq Q\log Q$, so Lemma 3.2 of \cite{DDLL23} can be applied verbatim with the same choice of $\ep=1/6$ the authors used in the proof of their Theorem 3.1. The remaining part of the proof is exactly the same since $|\psi(n)|\leq 1$ for all $n$.
\end{proof}

In order to use the above proposition, we need to estimate the expectation $\ex (|L(1, \Y \overline{\psi})|^{2r})$, which we do in the following result.

\begin{pro}\label{Pro:RandomL-functions}
    Let $r>1$ be a large real number. Let $\psi$ be a primitive Dirichlet character modulo a non-exceptional modulus $m$ with 
    $m\leq \log Q$. Let  $k$ be the order of $\psi$. Then, we have
    \begin{align*}
        \ex (|L(1, \Y \overline{\psi})|^{2r}) = \exp\bigg(2r\beta \log_2 r +  O\bigg(r\dfrac{\log_2 r}{k^2} + r\log_2m\bigg)\bigg).
    \end{align*}
\end{pro}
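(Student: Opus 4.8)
The plan is to compute $\ex(|L(1,\Y\overline\psi)|^{2r})$ by exploiting the independence of the $\Y(p)$, writing the expectation as an Euler product $\prod_p \ex(|1-\Y(p)\overline{\psi(p)}/p|^{-2r})$, and then analyzing the local factors according to the size of $p$ relative to $r$. First I would split the primes into three ranges: small primes $p\le r$, medium primes $r<p\le r(\log r)^{C}$ (or a similar window), and large primes. For large primes $p$ the local factor is $1+O(r^2/p^2)$, contributing $\exp(O(r^2\sum_{p>r}p^{-2}))=\exp(O(r/\log r))$, which is absorbed in the error term. For the medium-to-small primes the main contribution comes from the ``generic'' behavior of $\re(\Y(p)\overline{\psi(p)})$; here the key is that $\ex(|1-\Y(p)\overline{\psi(p)}/p|^{-2r})$ is dominated by the rare event that $\Y(p)\overline{\psi(p)}$ lies close to $1$, i.e.\ that $\Y(p)$ is very near $\psi(p)$ on the unit circle.

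The heart of the computation is the following: for $p\le r$ the local factor behaves like $\exp\big(-r\,\ex(\log|1-\Y(p)\overline{\psi(p)}/p|^2)+\text{(rare-event boost)}\big)$; the dominant term is $\sum_{p\le r}$ of $\log$ of the local factor, and after summing over $p$ one extracts the main term $2r\beta\log_2 r$. The constant $\beta=3\sqrt3/(2\pi)$ enters precisely because $\Y(p)$ takes the three cube-root-of-unity values (for $p=3$ or $p\equiv2\pmod3$) with equal probability, and the relevant large-deviation rate for $\prod_{p\le r}(1-\Y(p)\overline{\psi(p)}/p)^{-1}$ to be large is governed by how efficiently one can line up many $\Y(p)$ with $\psi(p)$; the optimal strategy concentrates the $\Y(p)$ in the arc of width $\asymp 1/k$ around $\psi(p)$, which is where the factor $1/k^2$ in the error term originates (a correction of relative size $\asymp 1/k^2$ to the rate, exactly as in Lemma~\ref{Lem: major2} via \cite[Proposition 2.1]{LaMa22}). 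This step is essentially a Laplace/saddle-point analysis of the product, and I would model it on the treatment of moments of $L(1,\chi)$ in \cite{GrSo03} and \cite{DaLa18}, adapted to the cubic random model of \cite{DDLL23}; in fact one should be able to quote the relevant moment computation from \cite{DDLL23} (their analysis of the distribution of $L(1,\chi)$ over $\F$) almost directly, tracking the dependence on $k$ and on the conductor $m$ of $\psi$.

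The $\psi$-dependence is handled as follows: for $p\mid m$ the local factor is trivial ($\psi(p)=0$ means $\Y(p)\overline{\psi(p)}$ contributes nothing special), and the effect of removing the primes dividing $m$ from the product changes the main term by at most $O(r\sum_{p\mid m}1/p)=O(r\log_2 m)$, which accounts for the $r\log_2 m$ error. One must also be a little careful that $\psi(p)$ being a specific $k$-th root of unity (rather than uniformly distributed) does not disturb the main term: since $\Y(p)$ is uniform on $\{1,\omega_3,\omega_3^2\}$ for $p\not\equiv1\pmod3$ and ``uniform-or-zero'' for $p\equiv1\pmod3$, the distribution of $\Y(p)\overline{\psi(p)}$ is a fixed rotation of that of $\Y(p)$, hence the local expectation depends on $\psi(p)$ only through whether $p\mid m$ — so the main term is genuinely independent of $\psi$ beyond the order $k$ and conductor $m$.

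The main obstacle I expect is the precise bookkeeping in the medium range of primes and the extraction of the exact coefficient $2r\beta$: one needs a clean large-deviations estimate for $\ex\big(\exp(2r\sum_{p\le r}\tfrac{\re(\Y(p)\overline{\psi(p)})}{p})\big)$ that is sharp to within $\exp(O(r\log_2 r/k^2+r\log_2 m))$, and getting the constant right requires computing an optimization (over how to tilt the measure on each $\Y(p)$) whose answer is $\beta\log_2 r$ — this is the same optimization that produces the exponent $\beta$ in \cite{GrSo07}, \cite{Gol}, and \cite{LaMa22}. I would therefore structure the proof to reduce, as quickly as possible, to invoking the corresponding sharp moment bound already established for this cubic random model in \cite{DDLL23}, and spend the remaining effort only on verifying the $k$- and $m$-uniformity, which is where our statement differs from theirs.
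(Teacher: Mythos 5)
Your plan follows essentially the same route as the paper: factor the expectation as an Euler product over the independent $\Y(p)$, show that the primes $p>10r$ contribute only $\exp(O(r/\log r))$, and for $p\le 10r$ reduce the local factors to $\ex\big(\exp(2r\re(\Y(p)\overline{\psi(p)})/p)\big)$, whose logarithm summed over $p$ is controlled by $2r\sum_{\ell\bmod k}\max_{z\in\{0,1,\omega_3,\omega_3^2\}}\re\big(z\,e(-\ell/k)\big)\sum_{\psi(p)=e(\ell/k)}1/p$; this prime sum is evaluated by \cite[Proposition 6.1]{LaMa22} (the common source of $\beta$, of the $1/k^2$ correction, and of the $\log_2 m$ error), and the matching lower bound comes from conditioning each $\Y(p)$ on its optimal value at the cost of a factor $\prod_{p\le 10r}(p/(3(p+2)))=\exp(O(r/\log r))$. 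One caveat: your proposed shortcut of quoting the moment computation of \cite{DDLL23} ``almost directly'' would not work, since their moments concern the untwisted $L(1,\Y)$, for which the large-deviation rate constant is $1$ rather than $\beta$; it is precisely the twist by a character of large order $k$ that produces $\beta$, so the local optimization and the equidistribution of $\psi(p)$ among the $k$-th roots of unity (which is also where non-exceptionality of $m$ enters) must be carried out directly, as in your fallback description.
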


\begin{proof}
By \eqref{Def: L(1, ypsi)} and the independence of  the random variables $\{\Y(p)\}_{p\:\text{prime}}$, we have
   
\begin{equation}
    \label{Eq:RandomL1}
\begin{aligned}
     \ex (|L(1, \Y \overline{\psi})|^{2r})&=\prod_{p\: \text{prime}}\ex \bigg
    (\bigg|1-\frac{\Y(p)\overline{\psi (p)}}{p}\bigg|^{-2r}\bigg)\\
    &=\prod_{p\: \text{prime}}\ex \left(\bigg(1-\dfrac{2\re (\Y(p)\overline{\psi(p)})}{p} + O
\left(\dfrac{1}{p^2}\right)\bigg)^{-r}\right).
\end{aligned}
\end{equation}


We will split the above product into two parts, depending on whether $p$ is small or large. First, if $p>10r$ 
we have
\begin{align*}
    \ex \bigg(\bigg(1-\frac{2\re (\Y(p)\overline{\psi(p)})}{p} + O\bigg(\dfrac{1}{p^2}\bigg)\bigg)^{-r}\bigg) &=\ex \bigg(1 + \dfrac{2r \re (\Y(p)\overline{\psi(p))}}{p} + O\bigg(\dfrac{r^2}{p^2}\bigg)\bigg)\\
    &=1 +  O\bigg(\dfrac{r^2}{p^2}\bigg),
\end{align*}
since $\ex (\Y(p))=0$. Therefore, we get
\begin{align}
   \notag  \prod_{p>10r }\ex &\bigg(\bigg(1-\frac{2\re \Y(p)\overline{\psi(p)}}{p} + O\bigg(\dfrac{1}{p^2}\bigg)\bigg)^{-r}\bigg)\\
   \label{Eq:RandomLlargeprimes} &=\prod_{p> 10r }\bigg(1 + O\bigg(\dfrac{r^2}{p^2}\bigg)\bigg)=\exp\bigg(O\bigg(r^2\sum_{p>10r }\dfrac{1}{p^2}\bigg)\bigg)
    = \exp\bigg(O\bigg(\dfrac{r}{\log r}\bigg)\bigg)
\end{align}
by the Prime Number Theorem and partial summation.

Next, we deal with the small primes. If $p\leq  10r $, then we have
\begin{align}
\label{Eq:RandomLsmall1}  \ex \bigg(\bigg(1-\frac{2\re (\Y(p)\overline{\psi(p)})}{p} + O\bigg(\dfrac{1}{p^2}\bigg)\bigg)^{-r}\bigg) &=\ex \bigg(\exp\bigg(\dfrac{2r \re (\Y(p)\overline{\psi(p)})}{p} + O\bigg(\dfrac{r}{p^2}\bigg)\bigg)\bigg),
\end{align}
Next, we observe that
\begin{align}
 \notag    \prod_{p\leq  10r }\ex &\bigg(\exp\bigg(\dfrac{2r\re (\Y(p)\overline{\psi(p)})}{p}\bigg)\bigg)\\
    \label{Eq:RandomLsmall2} &\leq \exp\bigg(2r\sum_{\ell \bmod k }\max_{z\in \{0, 1, \omega_3, \omega_3^2\}}\re \bigg(z\cdot e\bigg(-\dfrac{\ell}{k}\bigg)\bigg)\sum_{\substack{p\leq 10r \\ \psi(p)=e(\ell/k)}}\dfrac{1}{p}\bigg).
\end{align}
By \cite[Proposition 6.1]{LaMa22}, we have
\begin{equation}\label{Eq:RandomLsmall3}
\begin{aligned}   \sum_{\ell \bmod k }\max_{z\in \{0, 1, \omega_3, \omega_3^2\}} &\re \bigg(z\cdot e\bigg(-\dfrac{\ell}{k}\bigg)\bigg)\sum_{\substack{p\leq 10r \\ \psi(p)=e(\ell/k)}}\dfrac{1}{p}\\
  & = \beta \cdot \dfrac{\pi (3, k)/(3k)}{\tan (\frac{\pi}{3k/(3,k)})} \log \log r+ O(\log \log m)\\
    &=\beta \log \log r + O\bigg(\dfrac{\log \log r}{k^2} + \log \log m\bigg),
\end{aligned}
\end{equation}
where we have used the fact that $t/(\tan t)=1 +O(t^2)$. Therefore, combining \eqref{Eq:RandomL1}, \eqref{Eq:RandomLlargeprimes}, \eqref{Eq:RandomLsmall1}, \eqref{Eq:RandomLsmall2}, \eqref{Eq:RandomLsmall3} we can conclude that
\begin{align*}
  \ex (|L(1, \Y \overline{\psi})|^{2r}) \leq \exp\bigg(2r\beta \log_2r + O\bigg(r\dfrac{\log_2r}{k^2} + r\log_2m\bigg)\bigg),
\end{align*}
which gives the desired upper bound.

Let us now turn our attention to the lower bound. First, we note that for all $\theta\in [0,1]$, the maximum of $\re(ze(-\theta))$  for $z\in \{0, 1, \omega_3, \omega_3^2\}$ is attained at one of the points $1, \omega_3$, or $\omega_3^2$. We choose $\Y(p)=z_\ell\in \{1, \omega_3, \omega_3^2\}$ optimally with probability $1/3$ or $p/(3(p+2))$ depending on whether $p=3$, $p\equiv 2\pmod 3$ or $p\equiv 1\pmod 3$. Hence, by the independence of the $\Y(p)$, we get
\begin{align*}
     &\prod_{p\leq 10r}\ex \bigg(\exp\bigg(\dfrac{2r\re (\Y(p)\overline{\psi(p)})}{p}\bigg)\bigg)\\
     & \geq \prod_{p\leq 10r}\left(\frac{p}{3(p+2)}\right)\exp\bigg(2r\sum_{\ell \bmod k}\re \bigg(z_\ell \cdot e\bigg(-\dfrac{\ell}{k}\bigg)\bigg)\sum_{\substack{p\leq 10r\\ \psi(p)=e(\ell/k)}}\dfrac{1}{p}\bigg),
\end{align*}
Note that by the Prime Number Theorem, $\prod_{p\leq 10r }\left(p/(3(p+2))\right) =\exp\left(O\left(r/\log r\right)\right).$
Therefore, by \eqref{Eq:RandomLsmall3} we obtain 
    \begin{align}\label{Eq:RandomLlower}
     \prod_{p\leq  10r }\ex \bigg(\exp\bigg(\dfrac{2r\re (\Y(p)\overline{\psi(p)})}{p}\bigg)\bigg)\geq \exp\bigg(2r\beta \log_2r + O\bigg(r\dfrac{\log_2r}{k^2} + r\log_2m\bigg)\bigg).
\end{align}
Combining the above estimate \eqref{Eq:RandomLlower} together with \eqref{Eq:RandomL1}, \eqref{Eq:RandomLlargeprimes}, \eqref{Eq:RandomLsmall1} completes the proof of the lower bound.
\end{proof}

We are now ready to complete the proof of Theorem \ref{Thm: L(1, chipsi)} by combining Propositions \ref{Pro:CubicL-functions} and \ref{Pro:RandomL-functions}.

\begin{proof}[Proof of Theorem \ref{Thm: L(1, chipsi)}] Recall that we wish to show that
\begin{align*}
        \dfrac{1}{\# \mathcal{F}_3(Q)}\#\bigg\{\chi\in \mathcal{F}_3(Q)\colon 
        |L(1, \chi\overline{\psi})|>V\bigg\}= \exp\left(-\exp\left(V^{1/\beta}(\log_2 V)^{O(1)}\right)\right). 
    \end{align*}
We first begin with the lower bound. Let $1\leq r\leq r_{\max}:=\log Q/(32\log_2Q\log_3Q)$ be a real number to be chosen appropriately so that
\begin{equation}\label{Eq:ConditionrV}
V^{2r}=\dfrac{1}{2}S(r),
\end{equation}
where 
$$S(u):=\dfrac{1}{\#\mathcal{F}_3(Q)} \sum_{\chi\in \mathcal{F}_3(Q)}|L(1, \chi\overline{\psi})|^{2u}.$$
Then, by the Paley-Zygmund inequality, we have
\begin{equation}\label{Eq:PaleyZygmund}
\begin{aligned}
        \dfrac{1}{\# \mathcal{F}_3(Q)}\#\bigg\{\chi\in \mathcal{F}_3(Q)\colon |L(1, \chi\overline{\psi})|>V\bigg\} \geq \frac14 \frac{S(r)^2}{S(2r)}=\dfrac{V^{4r}}{S(2r)}.
    \end{aligned}
\end{equation}
For all $1\leq u\leq  r_{\max}$
it follows from Propositions \ref{Pro:CubicL-functions} and \ref{Pro:RandomL-functions} that
\begin{equation}\label{Eq:EstimaterV}
    \begin{aligned}
        S (u)&=\ex (|L(1, \Y\overline{\psi})|^{2u}) + O\bigg(\exp\bigg(-\dfrac{\log Q}{16\log_2 Q}\bigg)\bigg)\\
    &= \exp\bigg(2u\beta \log_2 u + O\bigg(\dfrac{u\log_2 u}{k^2} + u\log_2m\bigg)\bigg),
    \end{aligned}
\end{equation}
where $k$ is the order of $\psi$.
Note that by our assumption, $m\in [\sqrt{\log V}, 2\sqrt{\log V}]$ and $k=m-1$. This implies $k\asymp \sqrt{\log V}$. Furthermore, if $r$ verifies \eqref{Eq:ConditionrV} then \eqref{Eq:EstimaterV} shows that we must have $\log V\asymp \log_2 r$, and hence
\begin{equation}\label{Eq:EstimateSr}
S(u)=\exp\bigg(2u\beta\log_2u +O(u\log_3V)\bigg),  
\end{equation} uniformly for all $u\in [r/2, 3r].$
Therefore, by our assumption on $V$ we deduce that an $r\in [1, r_{\max}]$ verifying \eqref{Eq:ConditionrV} must exist, since both sides of \eqref{Eq:ConditionrV} are continuous functions of $r$, the left hand side is larger than the right when $r=1$, and the reverse is true when $r=r_{\textup{max}}.$ Moreover, we infer from \eqref{Eq:ConditionrV} and \eqref{Eq:EstimateSr} that
$$
\log V= \beta\log_2 r+O(\log_4r).
$$
and hence
$$
r= \exp\left(V^{1/\beta}(\log_2 V)^{O(1)}\right).
$$
Inserting these estimates in \eqref{Eq:PaleyZygmund} implies
\begin{align*}
\dfrac{1}{\# \mathcal{F}_3(Q)}\#\bigg\{\chi\in \mathcal{F}_3(Q)\colon |L(1, \chi\overline{\psi})|>V\bigg\} \geq \dfrac{V^{4r}}{S(2r)} &\gg \exp\left(-C_{11}r\log_4r\right)\\
&\gg \exp\left(-\exp\left(V^{1/\beta}(\log_2 V)^{O(1)}\right)\right),
    \end{align*}
for some positive constant $C_{11}$. This gives the desired lower bound.

Finally, for the upper bound, we use \eqref{Eq:EstimaterV} with the choice $u= \exp\left(V^{1/\beta}(\log_2 V)^{-C_{12}}\right)$, for a suitably large constant $C_{12}$, to obtain
\begin{align*}
\dfrac{1}{\# \mathcal{F}_3(Q)}\#\bigg\{\chi\in \mathcal{F}_3(Q)\colon |L(1, \chi\overline{\psi})|>V\bigg\}   \leq  \dfrac{S(u)}{V^{2u}} &\ll \exp\left(-C_{12}u\log_4 u\right)\\
&\ll  \exp\left(-\exp\left(V^{1/\beta}(\log_2 V)^{O(1)}\right)\right).
\end{align*}
This completes the proof.

\end{proof}


\section{A structure theorem for large cubic character sums: Proof of Theorem \ref{Thm:Structure}}\label{Sec: structure}
\begin{proof}[Proof of Theorem \ref{Thm:Structure}]
We define $\mathcal{C}_Q(V)$ to be the set of cubic characters $\chi\in \F$ such that $M(\chi)>V$
and \begin{equation}\label{Eq:StrongAssumptionTail}
 \max_{t\in [0,1]}\max_{\alpha\in [0,1)}\Bigg|\sum_{\substack{1\leq n\leq Z\\ P^{+}(n)> y}} \frac{\chi(n) e(n\alpha)}{n^{1+it}}\Bigg|<1,   
\end{equation}
where $Z=Q^{21/40}$ and $y=y(V)$ is given by \eqref{Eq:DefinitionYV}.
Then it follows from Theorems \ref{Thm:Main} and \ref{Thm:TailPolya} that 
$$ \# \mathcal{C}_Q(V)= \left(1+O\left(e^{-\frac{y}{(\log y)^2}}\right)\right)\#\left\{ \chi\in \F \colon M(\chi)>V \right\}.$$
Let $\chi\in \mathcal{C}_Q(V).$ By \eqref{Eq:PolyaFourier2} and our assumption \eqref{Eq:StrongAssumptionTail} we have 
\begin{equation}\label{Eq:MChiAlphaChi}
   M(\chi)=\frac{1}{2\pi}\Bigg|\sum_{\substack{1\leq |n|\leq Z\\ P^+(n)\leq y}} \frac{\chi(n)e(n\alpha_{\chi})}{n}\Bigg| +O(1).
\end{equation}
If $b>L=(\log y)^{4/11}$ then by \cite[Corollary 2.2]{Gol}, we have
\begin{equation}\label{Eq:ApproxAlphaChi}
\begin{aligned}
\sum_{\substack{1\leq |n|\leq Z\\ P^+(n)\leq y}}\dfrac{\chi(n)e(n\alpha_{\chi})}{n}\ll \log B +  \dfrac{(\log L)^{5/2}}{L^{1/2}}(\log y) + \log_2y\ll \frac{V}{\log V} ,
\end{aligned}
\end{equation}
contradicting the fact that $M(\chi)>V.$ Therefore, we must have $b\leq (\log y)^{4/11}.$ We now apply \cite[Lemma 4.1]{Gol} to obtain
\begin{align*}
\sum_{\substack{1\leq |n|\leq Z\\ P^+(n)\leq y}}\dfrac{\chi(n)e(n\alpha_{\chi})}{n}
&=\sum_{\substack{1\leq |n|\leq K\\ P^+(n)\leq y}}\dfrac{\chi(n)e(na/b)}{n} + O\bigg(\dfrac{(\log B)^{3/2}}{B^{1/2}}(\log y)^2 + \log B + \log_2y\bigg)\\
&= \sum_{\substack{1\leq |n|\leq K\\ P^+(n)\leq y}}\dfrac{\chi(n)e(na/b)}{n} + O\bigg(\frac{V}{\log V}\bigg),
\end{align*}
where $K=\min(Z, |b\alpha_{\chi}-a|^{-1}).$ Therefore, we deduce from \eqref{Eq:MChiAlphaChi} that 
$$\Bigg|\sum_{\substack{1\leq |n|\leq K\\ P^+(n)\leq y}}\dfrac{\chi(n)e(na/b)}{n}\Bigg|\gg V, $$ and hence by Lemma \ref{Lem: major1}  we must have $m\mid b$ and $\xi$ is odd, since $9/(11\beta)<1$. Moreover, combining the estimates \eqref{Eq:MChiAlphaChi}, \eqref{Eq:ApproxAlphaChi} with Lemma \ref{Lem: major1} we deduce in this case
\begin{equation}\label{Eq:UpperMChiWithb}
\begin{aligned}M(\chi)&\ll \dfrac{m^{3/2}}{\varphi(m) b}\prod_{p\mid \frac{b}{m}}\left(1+\frac{p+1}{p-1}\right)(\log y)e^{-\mathcal{M}(\chi\overline{\xi}; y, (\log y)^{-7/11})}\\
&\ll \dfrac{1}{ b^{1/2+o(1)}}(\log y)e^{-\mathcal{M}(\chi\overline{\xi}; y, (\log y)^{-7/11})},
\end{aligned}
\end{equation}
since $\varphi(m)\gg m/\log_2m$ and 
$\prod_{p\mid \frac{b}{m}}\left(1+\frac{p+1}{p-1}\right)\leq 4^{\omega(b)}, $
where $\omega(b)$ is the number of distinct prime factors of $b$, and where we have used that $\omega(b)=o(\log b).$
Next, by Lemma \ref{Lem: major2} we have
\begin{equation}\label{Eq:LowerBoundMStructure}\mathcal{M}(\chi\overline{\xi}; y, (\log y)^{-7/11})\geq (1-\beta)\log_2 y+o(\log m).
\end{equation}
Therefore, we deduce that 
$$ V<M(\chi)\ll b^{-1/2+o(1)}(\log y)^{\beta}= b^{-1/2+o(1)} V(\log V)^{1/4+O(\ep)}.$$
This shows that 
\begin{equation}\label{Eq:UpperBoundb}
b\leq (\log V)^{1/2+O(\ep)}.
\end{equation}
Furthermore, by \eqref{Eq:UpperMChiWithb} and Lemma \ref{Lem: major2} we obtain 
\begin{equation}\label{Eq:OrderMChiLarge}
m^{1/2+o(1)}V<m^{1/2+o(1)}M(\chi) \ll (\log y)e^{-\mathcal{M}(\chi\overline{\xi}; y, (\log y)^{-7/11})}\ll (\log y)^{\beta-c_0/m^2} m^{o(1)},
\end{equation}
for some positive constant $c_0.$ In particular, this implies 
$$\exp\left(\left(\frac12+o(1)\right)\log m+ c_1\frac{\log V}{m^2}\right)\leq (\log V)^{1/4+O(\ep)},$$
for some positive constant $c_1.$
Writing $u=m/\sqrt{\log V}$ and taking the logarithm on both sides, we derive
    $$\left(\frac{1}{2}+o(1)\right)\log u+ \frac{c_1}{u^2}\leq  O(\ep\log_2 V).
    $$
This shows that $u=(\log V)^{O(\ep)}$ and consequently $m=(\log V)^{1/2+O(\ep)}$ and $b=(\log V)^{1/2+O(\ep)}$ by \eqref{Eq:UpperBoundb}, which completes the proof of Parts 1 and 2. 

Next, we establish Part 3. Inserting the estimate on $m$ in \eqref{Eq:UpperMChiWithb} and using \eqref{Eq:LowerBoundMStructure} we derive 
$$ V<M(\chi) \ll \frac{1}{m^{1/2+o(1)}}(\log y)e^{-\mathcal{M}(\chi\overline{\xi}; y, (\log y)^{-7/11})}\ll V(\log V)^{O(\ep)}.$$
Thus, we deduce that
\begin{equation}\label{Eq:TrueOrderMChi}
M(\chi)= \frac{1}{m^{1/2+O(\ep)}} (\log y) \exp\left(-\mathcal{M}(\chi\overline{\xi}; y, (\log y)^{-7/11})\right).
\end{equation} Furthermore, by our definition of $t_{\chi}$ we have 
$$
(\log y) \exp\left(-\mathcal{M}(\chi\overline{\xi}; y, (\log y)^{-7/11})\right)\asymp   \exp\left(\re\sum_{p\leq y} \frac{\chi(p)\overline{\xi(p)}}{p^{1+it_{\chi}}}\right)\asymp\prod_{p\leq y}\left|1-\frac{\chi(p)\overline{\xi(p)}}{p^{1+it_{\chi}}}\right|^{-1}.
$$
Moreover, since the conductor of $\chi\overline{\xi}$ is $\leq Q\log Q$, it follows from the P\'olya-Vinogradov inequality that 
\begin{equation}\label{Eq:ApproxL1chiPV}
L(1+it_{\chi},\chi\overline{\xi})= \sum_{n\leq Z}\frac{\chi(n)\overline{\xi}(n)}{n^{1+it_{\chi}}}+O(1),
\end{equation}
where $Z=Q^{21/40}$ as before.
Furthermore, by our assumption \eqref{Eq:StrongAssumptionTail} we have 
$$\sum_{\substack{1\leq n\leq Z\\ P^{+}(n)> y}}\frac{\chi(n)\overline{\xi}(n)}{n^{1+it_{\chi}}}= \frac{1}{\tau(\xi)}\sum_{a\bmod m}\xi(a)\sum_{\substack{1\leq n\leq Z\\ P^{+}(n)> y}}\frac{\chi(n)e(an/m)}{n^{1+it_{\chi}}}\ll \sqrt{m}\ll \log V.$$
Inserting this bound in \eqref{Eq:ApproxL1chiPV}
we deduce that 
$$ L(1+it_{\chi},\chi\overline{\xi})=\sum_{\substack{1\leq n\leq Z\\ P^{+}(n)\leq  y}}\frac{\chi(n)\overline{\xi}(n)}{n^{1+it_{\chi}}} +O(\log V)= \prod_{p\leq y}\left(1-\frac{\chi(p)\overline{\xi(p)}}{p^{1+it_{\chi}}}\right)^{-1}+O(\log V),$$
since 
$$ \sum_{\substack{n> Z\\ P^{+}(n)\leq  y}}\frac{1}{n}\ll 1,$$
by \cite[Lemma 3.2]{BGGK18}.
This completes the proof.
\end{proof}


\bibliographystyle{plain}

\end{document}